\address{Department of Mathematics, Kyoto
University, Kyoto, Japan } \email{fukaya@math.kyoto-u.ac.jp}
\address{Department of Mathematics, University of
Wisconsin, Madison, WI, USA \& \newline \indent Department of Mathematics,
POSTSECH, Pohang, Korea } \email{oh@math.wisc.edu}
\address{Graduate School of Mathematics,
Nagoya University, Nagoya, Japan \&
Korea Institute for Advanced Study, Seoul, Korea,} \email{ohta@math.nagoya-u.ac.jp}
\address{Department of Mathematics,
Hokkaido University, Sapporo, Japan \&
Korea Institute for Advanced Study, Seoul, Korea,}
\email{ono@math.sci.hokudai.ac.jp}
\def\E{\ifmmode{\mathbb E}\else{$\mathbb E$}\fi} 
\def\N{\ifmmode{\mathbb N}\else{$\mathbb N$}\fi} 
\def\R{\ifmmode{\mathbb R}\else{$\mathbb R$}\fi} 
\def\Q{\ifmmode{\mathbb Q}\else{$\mathbb Q$}\fi} 
\def\C{\ifmmode{\mathbb C}\else{$\mathbb C$}\fi} 
\def\H{\ifmmode{\mathbb H}\else{$\mathbb H$}\fi} 
\def\Z{\ifmmode{\mathbb Z}\else{$\mathbb Z$}\fi} 
\def\P{\ifmmode{\mathbb P}\else{$\mathbb P$}\fi} 
\def\T{\ifmmode{\mathbb T}\else{$\mathbb T$}\fi} 
\def\SS{\ifmmode{\mathbb S}\else{$\mathbb S$}\fi} 
\def\DD{\ifmmode{\mathbb D}\else{$\mathbb D$}\fi} 
\newcommand{\e}{\varepsilon}
\newcommand{\del}{\partial}
\newcommand{\ben}{\begin{enumerate}}
\newcommand{\een}{\end{enumerate}}
\newcommand{\be}{\begin{equation}}
\newcommand{\ee}{\end{equation}}
\newcommand{\bea}{\begin{eqnarray}}
\newcommand{\eea}{\end{eqnarray}}
\newcommand{\beastar}{\begin{eqnarray*}}
\newcommand{\eeastar}{\end{eqnarray*}}
\newcommand{\bc}{\begin{center}}
\newcommand{\ec}{\end{center}}
\theoremstyle{theorem}
\newtheorem{thm}{Theorem}[section]
\newtheorem{cor}[thm]{Corollary}
\newtheorem{lem}[thm]{Lemma}
\newtheorem{prop}[thm]{Proposition}
\theoremstyle{definition}
\newtheorem{defn}[thm]{Definition}
\newtheorem{rem}[thm]{Remark}
\newtheorem*{thm*}{Theorem}
\numberwithin{equation}{section}
\def\R{{\mathbb R}}
\def\Crit{{\hbox{Crit}}}
\def\E{{\mathbb E}}
\def\Z{{\mathbb Z}}
\def\C{{\mathbb C}}
\def\R{{\mathbb R}}
\def\P{{\mathbb P}}
\def\N{{\mathbb N}}
\def\11{{\mathbb I}}
\def\dudtau{{\frac{\del u}{\del \tau}}}
\def\dudt{{\frac{\del u}{\del t}}}
\def\C{\mathbb{C}}
\def\Z{\mathbb{Z}}
\def\T{\mathbb{T}}
\def\Q{\mathbb{Q}}
\def\E{\ifmmode{\mathbb E}\else{$\mathbb E$}\fi} 
\def\N{\ifmmode{\mathbb N}\else{$\mathbb N$}\fi} 
\def\R{\ifmmode{\mathbb R}\else{$\mathbb R$}\fi} 
\def\Q{\ifmmode{\mathbb Q}\else{$\mathbb Q$}\fi} 
\def\C{\ifmmode{\mathbb C}\else{$\mathbb C$}\fi} 
\def\H{\ifmmode{\mathbb H}\else{$\mathbb H$}\fi} 
\def\Z{\ifmmode{\mathbb Z}\else{$\mathbb Z$}\fi} 
\def\P{\ifmmode{\mathbb P}\else{$\mathbb P$}\fi} 
\def\SS{\ifmmode{\mathbb S}\else{$\mathbb S$}\fi} 
\def\DD{\ifmmode{\mathbb D}\else{$\mathbb D$}\fi} 
\def\R{{\mathbb R}}
\def\Crit{{\hbox{Crit}}}
\def\E{{\mathbb E}}
\def\Z{{\mathbb Z}}
\def\C{{\mathbb C}}
\def\R{{\mathbb R}}
\def\N{{\mathbb N}}
\def\MM{{\mathcal M}}
\def\e{\varepsilon}
\def\CA{{\mathcal A}}
\def\CM{{\mathcal M}}
\def\darr#1{\raise1.5ex\hbox{$\leftrightarrow$}
\mkern-16.5mu #1}
\def\roughly#1{\raise.3ex\hbox{$#1$\kern-.75em
\lower1ex\hbox{$\sim$}}}
\def\opname#1{\mathop{\kern0pt{\rm #1}}\nolimits}
\def\supp{\operatorname{supp}}
\def\leng{\operatorname{leng}}
\def\dist{\operatorname{dist}}
\def\Crit{\operatorname{Crit}}
\begin{document}
\quad \vskip1.375truein

\def\mq{\mathfrak{q}}
\def\mp{\mathfrak{p}}
\def\mH{\mathfrak{H}}
\def\mh{\mathfrak{h}}
\def\ma{\mathfrak{a}}
\def\ms{\mathfrak{s}}
\def\mm{\mathfrak{m}}
\def\mn{\mathfrak{n}}
\def\mz{\mathfrak{z}}
\def\mw{\mathfrak{w}}
\def\Hoch{{\tt Hoch}}
\def\mt{\mathfrak{t}}
\def\ml{\mathfrak{l}}
\def\mT{\mathfrak{T}}
\def\mL{\mathfrak{L}}
\def\mg{\mathfrak{g}}
\def\md{\mathfrak{d}}
\def\mr{\mathfrak{r}}

\title[Displacement of polydisks]{
Displacement of polydisks and Lagrangian Floer theory}

\author[K. Fukaya, Y.-G. Oh, H. Ohta, K.
Ono]{Kenji Fukaya, Yong-Geun Oh, Hiroshi Ohta, Kaoru Ono}
\thanks{KF is supported partially by JSPS Grant-in-Aid for Scientific Research
No.19104001 and Global COE program G08, YO by US NSF grant \# 0904197, HO by JSPS Grant-in-Aid
for Scientific Research Nos. 19340017 and 23340015, and KO by JSPS Grant-in-Aid for
Scientific Research, Nos. 21244002 and 18340014}

\date{April 15, 2011}

\begin{abstract} There are two purposes of the present article.
One is to correct an error in the proof of Theorem 6.1.25 in \cite{fooo:book}, from which
Theorem J \cite{fooo:book} follows. In the course of doing so, we also
obtain a new lower bound of the displacement energy of polydisks in
general dimension. The results of the present article are  motivated by the recent
preprint of Hind \cite{hind} where the 4 dimensional case is studied.
Our proof is different from Hind's even in the
4 dimensional case and provides stronger result, and relies on the study of
torsion thresholds of Floer cohomology of Lagrangian torus fiber in
simple toric manifolds associated to the polydisks.
\end{abstract}

\keywords{Polydisks, Hamiltonian displacement, Lagrangian Floer (co)homology,
torsion threshold, displacement energy}

\maketitle

\tableofcontents

\section{Introduction}
\label{sec:intro}

In \cite{fooo:book}, we stated a
\emph{lower bound} of the displacement energy of
relatively spin Lagrangian submanifold $L$ with a bounding cochain $b$
in terms of torsion exponents of Floer cohomology $HF((L,b),(L,b);\Lambda_{0,\text{\rm nov}})$.
This is Theorem J in \cite{fooo:book}, which is a consequence of Theorem 6.1.25 in the book.
See Theorem \ref{thm:correctedthmJ} and
Theorem \ref{Theorem2} of this paper for the precise statement in a general setting.
However, the proof of Theorem 6.1.25 contains an error.
One of the purposes of the present paper is to correct this error.
There are two key ingredients in this correction of the error, which are interrelated to
each other: one is the usage of an optimal Floer chain map
introduced in the present paper, and an energy estimate of the type
which was first introduced by Chekanov \cite{chekanov} and clarified by
the second named author in \cite{oh:mrl}.

Another purpose is to employ Theorem J \cite{fooo:book} and
obtain an estimate of the lower bound of the displacement energy of
polydisks in cylinder which provides a higher dimensional generalization of
a recent result of Hind \cite{hind} as well as an improvement of Hind's result.

Before we achieve the two purposes above, we prove
the following non-displacement theorem to illustrate a geometric
consequence of Theorem J \cite{fooo:book} in a simple example.
We denote by $S$ the \emph{Gromov width} of any domain of our interest.
For example, the ball $B^{2k} \subset \C^k$ of radius $r > 0$
has Gromov width
$$
S = \pi r^2.
$$
Let $S^1(S) \subset \C$ be a circle of radius $r=\sqrt{\pi^{-1}S}$,
i.e., $S = \pi r^2$.  We also consider $S^1_{eq} \subset S^2(1)$ the
equator in the sphere of area $1$.

\begin{thm}\label{thm:non-hind}
We put $X= \C \times S^2(1)$.
Suppose $S > 1/2$. Consider any time-dependent Hamiltonian $H:[0,1]
\times X \to \R$ with its Hofer norm $\Vert H\Vert < S$. Then we have
$$
\psi_H(S^1(S) \times S^1_{eq}) \cap (S^1(S) \times S^1_{eq}) \ne
\emptyset
$$
for its time-one map $\psi_H:= \phi_H^1$.
\end{thm}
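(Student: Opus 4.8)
The plan is to realize $L:=S^1(S)\times S^1_{eq}$ as a Lagrangian torus fiber of a toric manifold, to compute the torsion of its Floer cohomology, and then to invoke Theorem J of \cite{fooo:book} (restated as Theorem \ref{thm:correctedthmJ} below) in order to turn a positive torsion exponent into a lower bound for the displacement energy. View $X=\C\times S^2(1)$ as a non-compact toric manifold with moment polytope $P=[0,\infty)\times[0,1]$; with the identifications fixed in the Introduction, $L$ is exactly the fiber $L(u_0)$ over the interior point $u_0=(S,\tfrac12)$, and $L\cong T^2$ is spin, hence relatively spin. Since $X$ is non-compact I would first pass to the compact toric manifold $X_R:=S^2(R)\times S^2(1)$ with $R$ large, using the standard reduction that a Hamiltonian displacing the compact set $L$ inside $X$ with $\Vert H\Vert<S$ may be taken with compact support and extended by zero to $X_R$ without increasing its Hofer norm, once $R$ is large enough that $D^2(S)\subset\C$ and the support of $H$ both lie inside $S^2(R)$; so it suffices to bound from below, by $S$, the displacement energy of $L(u_0)$ inside $X_R$, where I choose in addition $R>2S$.

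By the toric formula of \cite{fooo:book}, the potential function of $L(u_0)\subset X_R$ is
$$
\mathfrak{PO}_{u_0}(y_1,y_2)=T^{S}y_1+T^{R-S}y_1^{-1}+T^{1/2}y_2+T^{1/2}y_2^{-1},
$$
with one term for each of the facets $u_1=0$, $u_1=R$, $u_2=0$, $u_2=1$. There is no coupling between $y_1$ and $y_2$, so the equation $y_2\,\partial\mathfrak{PO}_{u_0}/\partial y_2=T^{1/2}(y_2-y_2^{-1})=0$ is solved exactly, not merely to leading order, by $y_2=-1$: this is the non-degenerate critical point of the $S^2(1)$-factor responsible for the non-displaceability of the monotone equator, and the hypothesis $S>\tfrac12$ is what guarantees that the two Maslov-$2$ disks of area $\tfrac12$ are the ones of minimal energy, so that solving their leading-term equation is indeed the correct first step. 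Fix the bounding cochain $b=b(\mathfrak{x})$ with $e^{\mathfrak{x}_2}=-1$ and $e^{\mathfrak{x}_1}$ an arbitrary element of valuation $0$. Then $L(u_0)$ sits at a non-degenerate critical point in the $S^2(1)$-direction, while in the $\C$-direction
$$
y_1\,\frac{\partial\mathfrak{PO}_{u_0}}{\partial y_1}=T^{S}y_1-T^{R-S}y_1^{-1}
$$
has $T$-valuation exactly $S$, since $R>2S$ forbids cancellation at the leading order $T^{S}$.

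Feeding this into the filtered $A_\infty$-algebra of $L(u_0)$ — equivalently, computing the Koszul-type complex over $\Lambda_{0,\mathrm{nov}}$ of the pair $\big(T^{S}y_1-T^{R-S}y_1^{-1},\;T^{1/2}(y_2-y_2^{-1})\big)$ specialized at $y_2=-1$ — yields
$$
HF\big((L,b),(L,b);\Lambda_{0,\mathrm{nov}}\big)\;\cong\;\big(\Lambda_{0,\mathrm{nov}}/T^{S}\Lambda_{0,\mathrm{nov}}\big)^{\oplus 2}
$$
up to a degree shift; in particular the free part vanishes (consistent with $L$ being displaceable) and every torsion exponent equals $S$. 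Theorem J then forces the displacement energy of $L(u_0)$ inside $X_R$ to be at least $S$, contradicting $\Vert H\Vert<S$; hence $\psi_H(L)\cap L\neq\emptyset$. The part I expect to be the real work is the middle step carried out honestly: establishing the toric potential-function formula and the identification of $HF$ in the compactified setting — checking that $b$ is a genuine bounding cochain, that the chosen orientation and spin data make $L(u_0)$ relatively spin, and that no further holomorphic disk contributes below energy $S$ — and then applying the corrected Theorem J with exactly its hypotheses; the remainder is bookkeeping with the energy filtration.
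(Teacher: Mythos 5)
Your proof is correct in its essentials, but it takes a genuinely different route from the paper's own proof of Theorem~\ref{thm:non-hind}. The paper deliberately gives an \emph{elementary} proof here: it works directly on the non-compact $\C\times S^2(1)$, uses only monotone Lagrangian Floer theory (no bounding cochains, no bulk deformations, no virtual chains), and derives a parity contradiction by analyzing the compactified boundary of a parametrized moduli space $\mathcal M^{para}(p,p;0)$ together with the Chekanov-type energy estimate. Your argument instead routes through compactification to the toric $S^2(R)\times S^2(1)$, the potential function, a bounding cochain, and Theorem~\ref{thm:eLTL} -- which is precisely the strategy the paper uses for the higher-dimensional Theorems~\ref{thm:hindpoly} and \ref{thm:hindbi}, so your proof is essentially a $2$-dimensional instance of that argument rather than of the one the paper gives for Theorem~\ref{thm:non-hind}. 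A few points of comparison: (i) you take a nontrivial weak bounding cochain $y_2=-1$, which kills the $y_2$-direction contribution exactly, whereas the paper's Theorems~\ref{thm:hindpoly}--\ref{thm:hindbi} take $b=0$ and instead invoke the orientation cancellation of the two hemisphere disks from Subsection 3.7.6 of \cite{fooo:book} and \cite{fooo:inv}; both are valid, and your choice is arguably cleaner; (ii) your Koszul-complex identification of $HF$ over $\Lambda_{0,\mathrm{nov}}$ is morally right for this Fano product, but it is not how the paper phrases the computation -- the paper works directly with $\frak m_1$ and the classification of Maslov-$2$ disks of \cite{cho-oh} -- so a referee would want you to pin this down against the actual statements of \cite{fooo:toric1,fooo:toric2}; (iii) the cutoff step needs slightly more care than ``extend by zero'': one must first subtract the time-dependent constant $c(t)=\tfrac12(\max_K H_t+\min_K H_t)$ (which does not change the flow) before multiplying by a bump function, so that the cutoff does not increase the Hofer norm. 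What the paper's elementary proof buys is independence from the full weakly-unobstructed/bulk-deformation machinery and a self-contained illustration of the torsion-threshold idea; what your proof buys is uniformity with the higher-dimensional results and an algebraically transparent identification of the torsion exponent as the valuation of $y_1\partial_{y_1}\mathfrak{PO}$ at the chosen point.
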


The proof of this theorem is `elementary' in that it uses only the
Lagrangian Floer theory for monotone Lagrangian submanifolds
\cite{oh:floer1} and by now standard computations for the energy
estimates used as in \cite{chekanov}, \cite{oh:mrl}, but does
\emph{not} use any techniques of virtual fundamental chains,
Bott-Morse theory or any higher homological algebra.

In fact, this theorem is a corollary of Theorem J \cite{fooo:book} whose precise
statement we refer to Section \ref{sec:correction}. We provide this elementary
proof in this particular case partly because it nicely illustrates
Theorem J \cite{fooo:book} when a deformation of Floer cohomology by a bounding
cochain is not needed. (On the other hand, the proof of Theorem J \cite{fooo:book}
is given in a very general context in terms of the deformed Floer cohomology of
\emph{weakly unobstructed} Lagrangian submanifolds \emph{after bulk deformations}.)

\begin{rem} Another motivation for us to prove this particular theorem
is related to the \emph{upper bound}  in the following inequality
stated in \cite{hind}:
\be\label{eq:hind} \left(\frac{1}{2}-\e\right)\lfloor S\rfloor + \e
\leq e^{Z_{1,1}}(D(1,S)) \leq \frac{S}{2} + 3 \ee
where $e^{Z_{1,1}}(D(1,S))$ is the displacement energy (Definition \ref{def:dist} (2)) of the bidisk $D(1,S)$ in the cylinder
$Z_{1,1}=Z_{1,1}(1+\e)$.
Here, following the notation of \cite{hind},
we denote the bidisks in $\C^2$ by
$$
D(a,b) = \{(z_1,z_2) \mid \pi |z_1|^2 < a, \,  \pi|z_2|^2 < b\} =
D^2(a) \times D^2(b) \subset \C^2
$$
with $a \leq b$, and
also denote the cylinder in $\C^2$ by
$$
Z_{1,1}(a+\e) = \{(z_1,z_2) \mid \pi |z_1|^2 < a + \e\} = D^2(a+\e) \times \C
$$
for $\e > 0$ small.
Hind's proof of the upper bound uses an explicit construction of displacing Hamiltonian
isotopy. However, the construction used in his proof seems to directly contradict to
the above Theorem \ref{thm:non-hind}, and also to Theorem J \cite{fooo:book}.
In fact, Theorem J \cite{fooo:book} implies the following stronger lower bound
\be\label{eq:fooo}
S \leq e^{Z_{1,1}}(D(1,S))
\ee
whenever $S > 1$.
\end{rem}

Hind \cite{hind} obtained his lower bound in \eqref{eq:hind} by using some embedding obstruction arising from
an explicit study of moduli space of proper holomorphic curves in $S^2 \times S^2 \setminus E$
where $E$ is the image of certain symplectic embedding of ellipsoid. His study on this
lower bound
heavily relies on the compactification result in symplectic field theory \cite{hofer2}, \cite{behwz}.
Partly because such an explicit study of moduli space in high dimension is not
available, Hind restricts himself to the 4 dimensional case. Our proof is different from
Hind's and relies on the study of \emph{torsion thresholds} of Floer cohomology of Lagrangian
torus fiber and Theorem J \cite{fooo:book}.

However, although the statement of Theorem J \cite{fooo:book} is
correct as it is, its proof contains some incorrect argument at the
end of the proof of Theorem 6.1.25 in p.392: \emph{The homomorphisms
in line 9 and 11 of page 392 of \cite{fooo:book} is not well-defined.} And
to give a correct proof of the same statement as stated in Theorem 6.1.25, hence Theorem
J, we also need to improve the energy estimate given in Proposition
5.3.45 \cite{fooo:book} and use a different construction of a Floer
chain map. It turns out that to obtain the the optimal energy
estimate needed to prove Theorem J in the construction of a Floer
chain map, we need to use the Hamiltonian perturbed Cauchy-Riemann
equation with \emph{fixed} Lagrangian boundary condition which
intertwines the so called the geometric version of the Floer
cohomology and the dynamical version of the Floer cohomology and then
applying the \emph{coordinate change} that relates the two. Similar
coordinate change was used by the second named author previously in
\cite{oh:jdg} for a similar purpose. Using this trick and an optimal
energy estimate originated by Chekanov \cite{chekanov}, we can prove
the statement of Theorem 6.1.25 and hence
Theorem J in \cite{fooo:book} as they are  currently stated.

Another purpose of the present paper is to apply Theorem J \cite{fooo:book}
in the study of displacement energy of polydisks in
arbitrary dimension and generalize the above mentioned 4 dimensional result
to arbitrary dimension whose description is now in order.
It turns out that the Lagrangian Floer theory developed in \cite{fooo:book}
and \cite{fooo:toric1,fooo:toric2} can be nicely applied to the various
symplectic topological questions concerning polydisks $D^2(r_1) \times \cdots
\times D^2(r_n)$. This is largely
because the polydisks contain Lagrangian tori
which can be embedded into the toric manifolds
$S^2(a_1) \times \cdots \times S^2(a_n)$ or
$S^2(a) \times \C P^{n-1}(\lambda)$ for suitable choices of $a_i$'s or $(a,\lambda)$.
Here $\C P^{n-1}(\lambda)$ is the projective space with the Fubini-Study K\"ahler form
$\omega$ with $[\omega](C) = \lambda$ for the homology class $C$ of
the complex line. With this notation, we have the
symplectic embedding of $B^{2n}(\lambda) \hookrightarrow \C P^{n-1}(\lambda)$
such that $\C P^{n-1}(\lambda) \setminus B^{2n}(\lambda)$ is the hyperplane at
infinity. In this way, we obtain various improvements and generalizations
of the theorems concerning symplectic topology of polydisks proven in
\cite{hind-kerm}, \cite{hind}.

We give two high dimensional generalizations of \eqref{eq:fooo}.
Denote by $(z_1,\dots, z_n)$ the
complex coordinates of $\C^n \cong \R^{2n}$. We decompose
$$
(z_1,\dots, z_n) = (z_1, z')
$$
with $z' = (z_2, \dots, z_n)$. We denote
$$
D(a_1,a_2,\dots, a_n) = \{(z_1,\dots, z_n) \mid \pi |z_1|^2 < a_1, \dots,  \pi|z_n|^2 < a_n\} \subset \C^n
$$
where $a_1 \leq a_2 \leq \dots \leq a_n$. Hind \cite{hind} considers only the case when $n = 2$. We also denote
the cylinder over the disk
$|z_1|^2 \leq (a_1+\e)/\pi$ by
$$
Z_{1,n-1}(a_1+\e) = \{(z_1,\dots, z_n) \mid \pi |z_1|^2 < a_1 + \e\}
$$
for $\e > 0$ small. The following two theorems can be regarded as two
different high dimensional
generalizations of the lower bound in \eqref{eq:hind}.

\begin{thm}\label{thm:hindpoly} Let $S > 1$ and $0<\e < 1$.
Put $Z_{1,n-1} = Z_{1,n-1}(1 + \e)$. Then we have
$$
S \leq e^{Z_{1,n-1}}(D(1,S,\dots, S)).
$$
\end{thm}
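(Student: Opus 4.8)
The plan is to deduce Theorem~\ref{thm:hindpoly} from Theorem~J \cite{fooo:book} --- in its corrected form, Theorem~\ref{thm:correctedthmJ} --- by applying it to a suitable Lagrangian torus fiber inside a closed toric manifold into which a relevant compact piece of the cylinder $Z_{1,n-1}(1+\e)$ embeds symplectically. We may assume $e^{Z_{1,n-1}}(D(1,S,\dots,S))<\infty$, since otherwise there is nothing to prove, and (after a standard approximation) we fix a compactly supported Hamiltonian $H$ on $Z_{1,n-1}(1+\e)=D^2(1+\e)\times\C^{n-1}$ whose time-one map $\phi_H^1$ displaces $D(1,S,\dots,S)$; its support then lies in $D^2(1+\e-\delta)\times D^2(R)^{n-1}$ for some $\delta>0$ and $R<\infty$. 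Since $\e<1$ we may choose $a\in(1+\e,2)$; then $D^2(1+\e)$ embeds in $S^2(a)$ as the complement of a disk, and as $H$ vanishes near $\{\pi|z_1|^2=1+\e\}$ it extends over it. Choosing $\mu>(n-1)R$ we have a toric embedding $D^2(R)^{n-1}\hookrightarrow\C P^{n-1}(\mu)$ and $H$ extends there by zero. This produces a Hamiltonian $\widehat H$ on the closed toric manifold $Y:=S^2(a)\times\C P^{n-1}(\mu)$ with $\Vert\widehat H\Vert_Y\le\Vert H\Vert$ whose time-one map displaces the image of $D(1,S,\dots,S)$, hence any Lagrangian contained in it.

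The test Lagrangian is chosen as follows. For $u_2,\dots,u_n\in(0,S)$ put $L^{(u)}:=L_1\times L'$, where $L_1\subset S^2(a)$ is the equator of $S^2(a)$ --- which lies inside $D^2(1)\subset S^2(a)$ precisely because $a/2<1$ --- and $L'\subset\C P^{n-1}(\mu)$ is the Lagrangian torus fiber of the moment map over the point $(u_2,\dots,u_n)$, which lies in the image of $D^2(S)^{n-1}$ since $(u_2,\dots,u_n)$ lies in the cube $[0,S)^{n-1}$ inside the moment polytope of $\C P^{n-1}(\mu)$. Thus $L^{(u)}\subset D(1,S,\dots,S)$, and $L^{(u)}$ is a weakly unobstructed, relatively spin Lagrangian torus \cite{fooo:toric1}, so $HF((L^{(u)},b),(L^{(u)},b);\Lambda_{0,\mathrm{nov}})$ is defined for each $b$. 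Now I would compute it: the factor $L_1$, being the monotone fiber of $S^2(a)$, contributes a free $\Lambda_{0,\mathrm{nov}}$-module of rank two; the potential function of $L'$ at the fiber over $(u_2,\dots,u_n)$ is the sum of the monomials $T^{u_i}y_i$ ($2\le i\le n$) from the coordinate hyperplanes together with a single monomial of the huge exponent $\mu-(u_2+\cdots+u_n)$ from the hyperplane at infinity, so it has no critical point over $\Lambda_{0,\mathrm{nov}}$, and a Koszul-type computation (as in \cite{fooo:toric1,fooo:toric2}) identifies $HF((L',b'),(L',b');\Lambda_{0,\mathrm{nov}})$ with a torsion module all of whose torsion exponents equal $\min_{2\le i\le n}u_i$. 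By the K\"unneth formula for Lagrangian Floer cohomology (see \cite{fooo:book}), $HF((L^{(u)},b),(L^{(u)},b);\Lambda_{0,\mathrm{nov}})$ is then a nonzero torsion module whose torsion exponents again all equal $\min_{2\le i\le n}u_i$. (One could just as well replace $Y$ by $S^2(a)\times\prod_{i=2}^n S^2(b_i)$ with all $b_i$ sufficiently large; the computation is identical.)

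The conclusion then follows quickly. By the corrected Theorem~J \cite{fooo:book}, the displacement energy $e^Y(L^{(u)})$ is at least the largest torsion exponent of $HF((L^{(u)},b),(L^{(u)},b);\Lambda_{0,\mathrm{nov}})$, hence $e^Y(L^{(u)})\ge\min_{2\le i\le n}u_i$. Since $\widehat H$ displaces $L^{(u)}$ in $Y$, we get $\Vert H\Vert\ge\Vert\widehat H\Vert_Y\ge e^Y(L^{(u)})\ge\min_{2\le i\le n}u_i$. Because $a$, $R$, and $\mu$ were chosen without reference to the $u_i$, we may now let each $u_i\uparrow S$ and conclude $\Vert H\Vert\ge S$. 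As $H$ was an arbitrary (compactly supported) Hamiltonian displacing $D(1,S,\dots,S)$ in the cylinder, this yields $e^{Z_{1,n-1}}(D(1,S,\dots,S))\ge S$.

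The hard part will be neither the compactification nor the limiting procedure --- those are routine --- but the Floer-cohomological input of the middle step together with the precise, \emph{sharp} form of Theorem~J: one needs the torsion exponent of $L^{(u)}$ to be exactly $\min_i u_i$ (so that it can be pushed arbitrarily close to $S$), and Theorem~J to convert that torsion exponent into the displacement-energy lower bound $\min_i u_i$ itself, rather than merely a fraction of it. That sharpness is precisely what the optimal Floer chain map and the improved, Chekanov-type \cite{chekanov} energy estimate of \cite{oh:mrl} --- which replace the flawed step in the original proof of Theorem~6.1.25 of \cite{fooo:book} --- are designed to provide.
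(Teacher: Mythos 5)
Your proof takes essentially the same approach as the paper's: compactify the relevant portion of the cylinder by embedding it into a closed product toric manifold, identify a Lagrangian torus fiber contained in the polydisk, show its torsion threshold is (close to) $S$, and conclude via Theorem J through Theorem~\ref{thm:eLTL}. The paper uses $X = S^2(1+\e')\times S^2(\lambda)^{n-1}$ with the fiber $S^1\bigl(\tfrac{1+\e'}{2}\bigr)\times S^1(S)^{n-1}$ and an explicit Maslov-index-2 disk-cancellation computation to get $\frak T(L,0)\geq S$, whereas you use $S^2(a)\times\C P^{n-1}(\mu)$ (a variant you yourself note is interchangeable) with a family of fibers $L^{(u)}$, $u_i\uparrow S$, and a Koszul--K\"unneth computation --- cosmetic differences with the same Floer-theoretic content, though your limiting fiber sits strictly inside the open polydisk and thereby tidies up a small point the paper glosses over (its $S^1(S)$ factors lie on $\partial D^2(S)$, i.e.\ only in $\overline{D(1,S,\dots,S)}$).
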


\begin{thm}\label{thm:hindbi} Let $S > 1$ and $0<\e < 1$.
Let $k$ be an integer satisfying $1 \leq k < n$.
Put $Z_{n-k,k} = Z_{n-k,k}(1 + \e) = D^2(1+\e)^{n-k} \times \C^k$. Then we have
$$
S \leq e^{Z_{n-k,k}}(D^2(1)^{n-k} \times B^{2k}(kS)).
$$
\end{thm}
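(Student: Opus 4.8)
The plan is to derive Theorem \ref{thm:hindbi} from the corrected form of Theorem J \cite{fooo:book} (stated as Theorem \ref{Theorem2}) by producing, inside $D^2(1)^{n-k}\times B^{2k}(kS)$, a relatively spin Lagrangian torus $L$ for which $HF((L,0),(L,0);\Lambda_{0,nov})$ is pure torsion with every torsion exponent equal to $S-\delta$, for an arbitrary fixed $\delta>0$. Suppose $H$ is supported in $Z_{n-k,k}(1+\e)$ and $\psi_H$ displaces $D^2(1)^{n-k}\times B^{2k}(kS)$. Since $\supp H$ is compact, its image under the projection to the $\C^k$-factor lies in $B^{2k}(A)$ for some $A$, which we enlarge so that $A\geq kS$. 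Fix $a_1,\dots,a_{n-k}$ with $1+\e<a_i<2$ — this is where the hypothesis $\e<1$ is used — and $\lambda>\max(A,(k+1)S)$, and put $X:=S^2(a_1)\times\cdots\times S^2(a_{n-k})\times\C P^{k}(\lambda)$. Using $D^2(1+\e)\hookrightarrow S^2(a_i)$ together with the standard symplectomorphism $B^{2k}(\lambda)\cong\C P^{k}(\lambda)\setminus\C P^{k-1}$, into which $B^{2k}(A)$ embeds since $A<\lambda$, one gets an open symplectic embedding $D^2(1+\e)^{n-k}\times B^{2k}(A)\hookrightarrow X$; extending $H$ by zero yields a Hamiltonian $\widetilde H$ on $X$ with $\|\widetilde H\|=\|H\|$.

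For $L$ we take the product $L=L_1\times\cdots\times L_{n-k}\times L'$. Here $L_i=\{\pi|z_i|^2=a_i/2\}$ is the equator of $S^2(a_i)$; it lies in $D^2(1)$ because $a_i/2<1$, and $HF((L_i,0),(L_i,0);\Lambda_{0,nov})\cong\Lambda_{0,nov}^{2}$ is free, the two Maslov index $2$ disks bounded by the equator contributing to the Floer differential with opposite signs. The factor $L'=\{\pi|z_j|^2=S-\delta:\ j=1,\dots,k\}$ lies in $B^{2k}(kS)$ because $\sum_j(S-\delta)=k(S-\delta)<kS$, and under the embedding above it is carried to the Lagrangian torus fiber of $\C P^{k}(\lambda)$ over the point with all moment coordinates equal to $S-\delta$. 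Each factor, being the equator of a $2$-sphere or a toric fiber, is weakly unobstructed, so $(L,0)$ is a legitimate input for Theorem J.

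The computational core is the evaluation of $HF((L',0),(L',0);\Lambda_{0,nov})$. By the toric Lagrangian Floer theory of \cite{fooo:toric1,fooo:toric2}, this group is the cohomology of the Koszul complex associated with $\bigl(\partial\mathfrak{PO}/\partial x_1,\dots,\partial\mathfrak{PO}/\partial x_k\bigr)$ evaluated at the origin, where $\mathfrak{PO}$ is the potential function of $\C P^{k}(\lambda)$. The $k$ ``inner'' holomorphic disks bounded by our fiber each have area $S-\delta$, while the ``outer'' disk has area $\lambda-k(S-\delta)$, which exceeds $S-\delta$ by the choice of $\lambda$; hence every one of the $k$ partial derivatives equals $T^{S-\delta}$ times one and the same unit $u_\delta\in\Lambda_{0,nov}$. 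By invariance of the Koszul complex under invertible $\Lambda_{0,nov}$-linear changes of the generating tuple, it is therefore isomorphic to the Koszul complex of $(T^{S-\delta}u_\delta,0,\dots,0)$, whose cohomology is $\bigl(\Lambda_{0,nov}/T^{S-\delta}\Lambda_{0,nov}\bigr)^{2^{k-1}}$ with every torsion exponent equal to $S-\delta$. A Künneth formula for the filtered $A_\infty$-algebras of the factors — in which no Tor term occurs, since each $HF(L_i)$ is free — then gives $HF((L,0),(L,0);\Lambda_{0,nov})\cong\bigl(\Lambda_{0,nov}/T^{S-\delta}\Lambda_{0,nov}\bigr)^{2^{n-1}}$, still with all torsion exponents equal to $S-\delta$. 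Since $L\subset D^2(1)^{n-k}\times B^{2k}(kS)$, the time-one map of $\widetilde H$ displaces $L$ in $X$, so Theorem \ref{Theorem2} gives $\|H\|=\|\widetilde H\|\geq e^{X}(L)\geq S-\delta$; as $H$ and $\delta>0$ were arbitrary, $e^{Z_{n-k,k}}(D^2(1)^{n-k}\times B^{2k}(kS))\geq S$.

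The step I expect to require the most care is the Floer-cohomology computation for $L'$: one must confirm that the toric machinery of \cite{fooo:toric1,fooo:toric2} applies to a fiber that is \emph{not} a critical point of $\mathfrak{PO}$ — so that its Floer cohomology is automatically pure torsion — and one must pin down exactly which torsion exponents occur. The decisive feature is that the $k$ inner disk areas coincide, which makes the leading term of the Floer differential, modulo the large outer disk, a unit multiple of contraction with $(1,\dots,1)$, and thereby forces every torsion exponent to be exactly $S-\delta$ rather than anything smaller; this equal-areas phenomenon is precisely what allows the single number $S$, rather than a fraction of it, to appear in the conclusion. The remaining ingredients — the Künneth formula, the weak unobstructedness of $(L,0)$, the elementary symplectic embeddings, and the appeal to the corrected Theorem J — are routine given the results of \cite{fooo:book,fooo:toric1,fooo:toric2}.
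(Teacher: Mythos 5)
Your proof is correct and follows the same overall strategy as the paper's: embed the displaceable polydisk into $S^2$-factors times $\C P^k(\lambda)$, take a Lagrangian toric fiber $L$ lying in the polydisk, bound its torsion threshold from below using the areas of the classified Maslov-index-$2$ disks, and then apply Theorem~\ref{thm:eLTL} (the displacement-energy consequence of Theorem~J) to contradict the displacement hypothesis.

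Two differences between your route and the paper's are worth recording. First, you place the $\C P^k$-factor of the torus at moment coordinates $S-\delta$ and pass to $\delta\to 0$; the paper instead uses $S^1(S)^k$, which sits on the topological boundary of the \emph{open} ball $B^{2k}(kS)$, so strictly speaking the torus is not inside the domain. Your $S-\delta$ device cleans this up. (You also let the $S^2$-areas $a_i$ vary in $(1+\e,2)$ rather than fixing them all equal to one $1+\e'$; this is immaterial since the two equatorial disks cancel within each $S^2$ factor regardless.) Second, the paper's torsion computation is more elementary: it simply classifies the Maslov-index-$2$ disks (via Cho--Oh), observes that the $n-k$ smaller pairs from the $S^2$-factors cancel in $\mathfrak m_1$ (Case~I-a of Subsection~3.7.6 of \cite{fooo:book}), and that the $k$ inner $\C P^k$-disks of area $S$ do not cancel, producing a torsion summand $\Lambda_{0,\text{nov}}/T^S\Lambda_{0,\text{nov}}$ (Case~I-b), so that $\mathfrak T(L,0)\ge S$ without needing the full structure of $HF$. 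You instead compute the full Floer cohomology via the Koszul-complex model of the toric fiber together with a K\"unneth formula, arriving at $(\Lambda_{0,\text{nov}}/T^{S-\delta}\Lambda_{0,\text{nov}})^{2^{n-1}}$ and hence $\mathfrak T(L,0)=S-\delta$ exactly. This is sharper but heavier: as you rightly flag, it presupposes that the Koszul/K\"unneth description of toric Floer cohomology applies at a non-critical fiber, which is morally contained in \cite{fooo:toric1,fooo:toric2} but is not what the paper appeals to; the paper's direct disk-area argument avoids invoking that machinery and is sufficient because only a \emph{lower} bound on $\mathfrak T(L,0)$ is required.
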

\par\smallskip

This paper borrows many notations and definitions from \cite{fooo:book}
without delving into detailed explanations thereof. Especially the notion of
bulk deformations is used mainly to make the statement of Theorem J
from \cite{fooo:book} in this paper as close as possible to that of \cite{fooo:book}.
We refer to the relevant sections of \cite{fooo:book} for more explantions
thereof.
However, for those who are mainly interested in the overall argument how the
torsion threshold can be used in the study of displacement energy, we
recommend them to directly look at Section \ref{sec:nondisplace},
Section \ref{sec:torthre} and Section \ref{sec:polydisks} and
refer to other sections as needed.

In March 2010, R. Hind gave a talk at MSRI workshop on Symplectic and Contact
Topology and Dynamics: Puzzles and Horizons.
Thanks to his talk, we took another look at our proof of Theorem 6.1.25 in \cite{fooo:book}
and found out an inaccurate point in the proof, which we rectify in this paper.
We thank him for his interesting talk and discussion.

\section{Notations}\label{sec:notation}

We introduce the
following general definitions and notations which we will use in this paper.

Let $(X, \omega)$ be a symplectic manifold. We denote by $J$
a time-dependent family of $\omega$-compatible almost
complex structures $J = \{J_t\}_{t \in [0,1]}$.

Let $\psi: X \to X$ be a Hamiltonian diffeomorphism and
$H \in C^\infty([0,1]\times X)$ a
\emph{normalized} Hamiltonian with $\phi_H^1 = \psi$ and $\int_X H_t
\, \omega^n = 0$. We denote the Hofer norm (see \cite{hofer}) of $H$ by
\be\label{def:HofernormH}
\|H\| =  \int_0^1 (\max H_t - \min H_t) \, dt.
\ee
We define the Hofer norm of $\psi$ by
\be\label{def:Hofernorm}
\|\psi\| = \inf_{H \mapsto \psi} \|H\|,
\ee
where $H \mapsto \psi$ means that
$\psi = \phi_H^1$.
We also define the length of a Hamiltonian isotopy $\phi_H=\{\phi^t_H\}$ by
\be\label{def:Hamleng}
{\rm leng}(\phi_H)=\|H\|.
\ee
Following Weinstein's notation \cite{alan}, we denote the set of Hamiltonian
deformations of $L$ by
$$
\frak{Iso}(L) =\{\psi(L) \mid \psi \in Ham(X,\omega)\}
$$
for a given Lagrangian submanifold $L \subset (X,\omega)$.

\begin{defn}\label{def:dist}
(1) Let $L' \in \frak{Iso}(L)$. Then we define the {\it Hofer distance} between $L, \, L'$ by
$$
\dist(L,L') = \inf_{H \in C^\infty([0,1] \times X)}
\{\|H\| \mid \phi_H^1(L) = L' \}.
$$
Or equivalently,
$$
\dist(L,L') = \inf_{\psi \in Ham(X,\omega)}
\{\|\psi\| \mid \psi(L) = L' \}.
$$
(2) Let $Y \subset X$.
We define the {\it displacement energy} $e^X(Y) \in [0,\infty]$ by
$$
e^X(Y) : = \inf_{\psi \in Ham(X,\omega)} \{ \Vert \psi\Vert \mid  \,
\psi(Y) \cap \overline Y = \emptyset\}.
$$
We put $e^X(Y) = \infty$ if there exists no $\psi \in Ham(X,\omega)$
with $\psi(Y) \cap \overline Y = \emptyset$.
When no confusion can occur, we simply write
$e(Y)$ instead of $e^X(Y)$.
\end{defn}

Let $\rho(\tau)$ be a smooth function on $\R$ such that $\rho(\tau)=0 \text{ or } 1$
when $\vert \tau \vert$ is sufficiently large.
In this paper we will take $\rho(\tau)$ as one of the following three types, either
\bea
\rho_+(\tau) & = & \begin{cases} 0 \quad & \mbox{for } \, \tau \leq 0 \\
1 \quad & \mbox{for } \, \tau \geq 1
\end{cases} \nonumber\\
\rho_+'(\tau) &\geq & 0 \label{eqn:rho}
\eea
 or $\rho_- = 1 -\rho_+$ or $\rho_K$ satisfying
\bea\label{eq:rho}
\rho_K(\tau) & = & \begin{cases} 0 \quad & \mbox{for } \, |\tau| \geq K  \\
1 \quad & \mbox{for }\, |\tau| \leq K -1
\end{cases} \nonumber\\
\rho_K' & \geq &  0 \quad \mbox{ on }\, [-K, -K+1], \quad \rho_K'
\leq 0 \quad\mbox{ on }\, [K-1,K]
\eea
for $K \geq 1$ and $\rho_K$
goes down to $\rho_{K=0} = 0$, e.g., $\rho_K = K\rho_1$. (See \eqref{rho_K}.)
\par
Ordering of the arguments in the pair $(L^{(0)},L^{(1)})$ varies
in the notations from \cite{fooo:book} for the various objects associated to
the pair of Lagrangian submanifold $(L^{(0)},L^{(1)})$. We mostly follow them
in the present paper. Specifically, we would like to mention the following
conventions:
\begin{enumerate}
\item (Path spaces) $\Omega (L^{(0)},L^{(1)};\ell_a)$.
\item (Floer moduli spaces) $\CM(L^{(1)},L^{(0)})$
\item (Floer complex and homology) $CF(L^{(1)},L^{(0)})$, $HF(L^{(1)},L^{(0)})$.
\end{enumerate}

\section{A non-displacement theorem}\label{sec:nondisplace}

In this section, we prove Theorem \ref{thm:non-hind}.
Recall that $S^1(S) \subset \C$ is a circle of radius $r$ with its area
$\pi r^2 = S$ and $S^1_{eq} \subset S^2(1)$ is the
equator in the sphere of area $1$. We put $X= \C \times S^2(1)$.
\begin{proof}[Proof of Theorem \ref{thm:non-hind}]
Suppose to the contrary that
\begin{equation}\label{empty}
\psi_H(S^1(S) \times S^1_{eq}) \cap (S^1(S) \times S^1_{eq}) =
\emptyset
\end{equation}
for a Hamiltonian $H$ with $\|H\| < S$. We denote
$$
c: = S - \|H\| > 0.
$$
Let $L^{(0)} = S^1(S) \times S^1_{eq}$. We choose $L^{(1)}$ as a small
Hamiltonian perturbation of $S^1(S) \times S^1_{eq}$ defined as follows:
We move $S^1(S)$ and $S^1_{eq}$ by small isometries on $\C$ and $S^2(1)$ respectively and obtain
$S^1_2(S), \,S^1_{2,eq}$ so that
$$
\psi_H(L^{(0)}) \cap (L^{(1)}) = \emptyset
$$
where we denote $L^{(1)} := S^1_2(S)\times S^1_{2,eq}$.

Since the condition \eqref{empty} is an open condition, such an
isometry obviously exists. We choose this perturbation so that
\be\label{dist}
\dist(L^{(0)},L^{(1)}) = \dist(S^1(S)
\times S^1_{eq},S^1_{2}(S)\times S^1_{2,eq}) \leq
\frac{c}{4}.
\ee

We then will deduce contradiction. We remark that $L^{(0)} \cap L^{(1)}$
consists of four transversal intersection points.
We take a one-parameter family of smooth functions $\rho_K$ on $\R$ satisfying (\ref{eq:rho}).
We denote by $X_H$ the Hamiltonian vector filed
of $H$ defined by $dH=\omega(X_H, \cdot)$.

Consider any pair $(p_-,p_+)$ of intersection points in
$ L^{(0)} \cap L^{(1)}$ and the equation
\begin{equation}\label{para}
\frac{\partial u}{\partial \tau} +
J\left(\frac{\partial u}{\partial t} - \rho_K (\tau) X_H(u)\right)
= 0
\end{equation}
of $u=u(\tau,t): \R \times [0,1] \to \C
\times S^2$ satisfying the boundary condition
\be\label{bdycond}
u(\tau,0)\in L^{(0)}, \, u(\tau,1) \in L^{(1)}, \,
u(\pm\infty,t) \equiv p_{\pm} \in L^{(0)} \cap L^{(1)}
\ee
and the finite energy condition
\begin{equation}\label{eqn:energy}
E_{(J,H,\rho_K)}(u) := \frac{1}{2} \int_{\R \times [0,1]}
\left|\frac{\del u}{\del \tau}\right|^2
+ \left|\frac{\partial u}{\partial t} - \rho_K (\tau) X_H(u)\right|^2 d\tau dt
 < \infty.
\end{equation}
Here we use the canonical complex structure $J$ on
$\C \times S^2(1)$, which we do not perturb.
Note that any solution $u$ of \eqref{para}, \eqref{bdycond}
carries a natural homotopy class. We denote
this homotopy class by $B$. As standard in Floer theory we define
the equivalence relation $B \sim B'$ if and only if
\be\label{equiv}
\omega(B_1) = \omega(B_2), \quad \mu(B_1) = \mu(B_2)
\ee
and denote by $\Pi(p_-,p_+)$ the set of equivalence classes.
Here $\mu$ denotes the Maslov index of the map $u$ associated to the pairs $(L^{(0)},L^{(1)})$
of Lagrangian submanifolds and the asymptotic condition $p_\pm$.
(We refer to Section 2.2 \cite{fooo:book} for a complete discussion on
the homotopy class and the Novikov covering.)
\par
The following energy estimate will be proved in Section
\ref{sec:improved}.
See Proposition \ref{prop:specialcase}.

\begin{lem} Let $0\leq K < \infty$ and $u$ be any finite energy solution of
\eqref{para}, \eqref{bdycond}. Then
\be \label{eq:energyHofer}
E_{(J,H,\rho_K)}(u) \leq \int u^*\omega + \|H\|.
\ee
\end{lem}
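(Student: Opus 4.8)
The plan is to run the standard Chekanov-type energy computation (compare \cite{chekanov}, \cite{oh:mrl}), rewriting the geometric energy of \eqref{eqn:energy} as a topological term plus a Hamiltonian correction that is controlled by the monotonicity properties of $\rho_K$ recorded in \eqref{eq:rho}. First I would use \eqref{para} to rewrite the integrand: since $\frac{\partial u}{\partial\tau}=-J\bigl(\frac{\partial u}{\partial t}-\rho_K(\tau)X_H(u)\bigr)$ the two squared terms in $E_{(J,H,\rho_K)}(u)$ coincide, so $E_{(J,H,\rho_K)}(u)=\int_{\R\times[0,1]}\bigl|\frac{\partial u}{\partial\tau}\bigr|^2\,d\tau\,dt$, and applying $J$ to \eqref{para} gives $J\frac{\partial u}{\partial\tau}=\frac{\partial u}{\partial t}-\rho_K(\tau)X_H(u)$. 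By $\omega$-compatibility of $J$,
\[
\Bigl|\frac{\partial u}{\partial\tau}\Bigr|^2=\omega\Bigl(\frac{\partial u}{\partial\tau},J\frac{\partial u}{\partial\tau}\Bigr)=\omega\Bigl(\frac{\partial u}{\partial\tau},\frac{\partial u}{\partial t}\Bigr)-\rho_K(\tau)\,\omega\Bigl(\frac{\partial u}{\partial\tau},X_H(u)\Bigr).
\]
The first summand is $(u^*\omega)(\partial_\tau,\partial_t)$, and since $dH_t=\omega(X_{H_t},\cdot)$ the second equals $-\rho_K(\tau)\bigl(-dH_t(\tfrac{\partial u}{\partial\tau})\bigr)=\rho_K(\tau)\frac{\partial}{\partial\tau}\bigl[H(t,u(\tau,t))\bigr]$; hence pointwise $\bigl|\frac{\partial u}{\partial\tau}\bigr|^2=(u^*\omega)(\partial_\tau,\partial_t)+\rho_K(\tau)\frac{\partial}{\partial\tau}\bigl[H(t,u(\tau,t))\bigr]$.

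Next I would integrate over $\R\times[0,1]$. The first term integrates to $\int u^*\omega$, which is finite because $E_{(J,H,\rho_K)}(u)<\infty$. For the second term I integrate by parts in $\tau$; since $\rho_K$ has compact $\tau$-support by \eqref{eq:rho} the boundary terms at $\tau=\pm\infty$ vanish, leaving $-\int_0^1\int_\R\rho_K'(\tau)\,H(t,u(\tau,t))\,d\tau\,dt$. I then split the $\tau$-integral over the two intervals where $\rho_K'\ne 0$: on $[-K,-K+1]$ we have $\rho_K'\ge 0$ with $\int\rho_K'\,d\tau=1$, so estimating $H(t,u)\ge\min H_t$ gives $-\int_{-K}^{-K+1}\rho_K'H\,d\tau\le -\min H_t$; on $[K-1,K]$ we have $\rho_K'\le 0$ with $\int\rho_K'\,d\tau=-1$, so estimating $H(t,u)\le\max H_t$ gives $-\int_{K-1}^{K}\rho_K'H\,d\tau\le \max H_t$. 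Adding these and integrating in $t$ bounds the Hamiltonian term by $\int_0^1(\max H_t-\min H_t)\,dt=\|H\|$, which is exactly \eqref{eq:energyHofer}. (For $0\le K\le 1$, where one uses $\rho_K=K\rho_1$, the identical computation yields the sharper $\int u^*\omega+K\|H\|$, and for $K=0$ the equation is unperturbed so the estimate is trivial.)

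I do not expect a genuine obstacle here: this is a routine estimate of the type originating in \cite{chekanov}, adapted to the $\tau$-cutoff $\rho_K$. The only points needing attention are (i) justifying that $\int u^*\omega$ is finite and that the $\tau$-integration by parts is legitimate --- both follow from finiteness of $E_{(J,H,\rho_K)}(u)$ together with the compact support of $\rho_K$, so no asymptotic (exponential-decay) analysis at the ends $\tau=\pm\infty$ is actually required to obtain the inequality --- and (ii) keeping the sign conventions in $dH=\omega(X_H,\cdot)$ and in the splitting of $\rho_K'$ from \eqref{eq:rho} straight, so that it is precisely the combination $\max H_t-\min H_t$, rather than a larger multiple of $\max|H_t|$, that appears.
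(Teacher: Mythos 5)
Your proof is correct and follows essentially the same Chekanov-type computation that the paper uses. The paper itself does not prove this lemma in Section~3 but defers to Proposition~\ref{prop:specialcase}, which in turn rests on the explicit calculation in Lemma~\ref{lem:energyestimate} (carried out there for $\rho_+$, with the $\rho_K$ case obtained by concatenating the $\rho_+$ and $\rho_-$ estimates); you simply run the identical calculation once, directly for $\rho_K$, and exploit its compact support to kill the boundary terms in the $\tau$-integration by parts before splitting into the two intervals of monotonicity --- a minor reorganization, not a different method.
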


We consider the parameterized moduli space
$$
\mathcal M^{para}(p_{-},p_{+};B) = \bigcup_{K \in \R_{\geq 0}} \{K\} \times \mathcal M^K(p_{-},p_{+};B)
$$
where $\mathcal M^K(p_{-},p_{+};B)$ is the space of
solutions in class $B$ to the equation (\ref{para}) satisfying the conditions (\ref{bdycond}) and
(\ref{eqn:energy})
for the parameter $K \in [0,\infty)$.
We note that the symplectic area
$\int_{\R\times [0,1]} u^*\omega$ is invariant under the homotopy in the space of
\emph{smooth} maps with the boundary condition \eqref{bdycond}, which is \emph{fixed}.
We shall consider only the triples $(p_{-},p_{+};B)$ whose associated moduli space
$\mathcal M^{para}(p_{-},p_{+};B)$ has virtual dimension $0$ or $1$.
\par

To study $\mathcal M(p_-,p_+;B)=\mathcal M^{K=0}(p_-,p_+;B)$, we also need to study the equation
$$
\frac{\partial u}{\partial \tau} +
J \frac{\partial u}{\partial t} = 0
$$
with the same fixed Lagrangian boundary condition but possibly with different asymptotic
condition $(p_-',p_+')$. The associated moduli space
carries the natural $\R$-action and denote by
$$
\overline{\mathcal M}(p'_{-},p'_{+};B')
$$
the compactification of its quotient by this $\R$-action.
We shall however need to consider only those $(p'_{-},p'_{+};B')$ whose associated moduli
space has virtual dimension $0$.
\par
First since all the nontrivial holomorphic disk
which bounds either $L^{(0)}$ or $L^{(1)}$ have Maslov index $\ge 2$
and all the holomorphic spheres have Chern number  $\ge 2$ we can easily perturb
those moduli spaces (of virtual dimension 0 or 1) so that they
do not have disk or sphere bubble. We would like to point out that the
necessary transversality result on the moduli spaces can be easily achieved in the
current context: For $\mathcal M^{para}(p_{-},p_{+})$ we can perturb Hamiltonian term on compact set
to make it transversal. For $\overline{\mathcal M}(p_{-},p_{+})$
we can directly check that this moduli space is transversal using the fact that our
chosen almost complex structure $J$ is the standard integrable one on $\C \times S^2(1)$.
\par
Now we study the end and the boundary of $\mathcal M^{para}(p,p;0)$ for a $p \in L^{(0)} \cap L^{(1)}$.
The space $\mathcal M^{para}(p,p;0)$ is one dimensional. Here $0 \in \Pi(p,p)$ denotes the
equivalence class corresponding to the constant map $u\equiv p$. In particular, we have
\begin{equation}\label{0area}
\int u^*\omega = 0.
\end{equation}
\par
We observe that the condition
$\psi_H(L^{(0)}) \cap L^{(1)} = \emptyset$ implies the following
\begin{lem}
For all sufficiently large $K > 0$,  (\ref{para}) has no solution. Namely, we have $\mathcal M^K(p,p;0) =\emptyset$.
\end{lem}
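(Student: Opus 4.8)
The plan is to argue by contradiction: suppose there are $K_i\to\infty$ and solutions $u_i\in\mathcal M^{K_i}(p,p;0)$ of \eqref{para} with the boundary condition \eqref{bdycond}. Since the homotopy class is $0$, \eqref{0area} gives $\int u_i^*\omega=0$, so the energy estimate \eqref{eq:energyHofer} yields the uniform bound $E_{(J,H,\rho_{K_i})}(u_i)\le\|H\|$, independent of $i$. Because $S^2(1)$ is compact and $J$ is the standard integrable structure, a standard monotonicity/maximum‑principle argument using this energy bound together with the fixed asymptotics $u_i(\pm\infty,\cdot)\equiv p$ confines all the $u_i$ to one fixed compact subset of $X=\C\times S^2(1)$; from here on all estimates take place over this compact set, so the constants below do not depend on $i$.

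On the sub‑strip $[-(K_i-1),K_i-1]\times[0,1]$, where $\rho_{K_i}\equiv 1$, each $u_i$ solves the Hamiltonian‑perturbed equation $\partial_\tau u+J(\partial_t u-X_H(u))=0$ with $u_i(\tau,0)\in L^{(0)}$, $u_i(\tau,1)\in L^{(1)}$. Since $\int_{-(K_i-1)}^{K_i-1}\big(\tfrac12\int_0^1|\partial_\tau u_i|^2\,dt\big)\,d\tau\le E_{(J,H,\rho_{K_i})}(u_i)\le\|H\|$ while the length of the interval tends to $\infty$, a pigeonhole argument over the $\sim K_i$ disjoint unit sub‑intervals inside $[-(K_i-2),K_i-2]$ produces a point $\tau_i$ with $\int_{\tau_i}^{\tau_i+1}\!\int_0^1|\partial_\tau u_i|^2\,dt\,d\tau\to 0$. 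For $i$ large this $L^2$‑quantity is below the bubbling threshold, so the interior $\varepsilon$‑regularity estimate for the ($i$‑independent) equation on that interval gives $\sup_{[\tau_i+\frac14,\tau_i+\frac34]\times[0,1]}|\partial_\tau u_i|\to 0$, hence $\partial_t u_i-X_{H_t}(u_i)=J\,\partial_\tau u_i\to 0$ uniformly there. The paths $t\mapsto u_i(\tau_i+\tfrac12,t)$ lie in the fixed compact set with uniformly bounded first derivatives, so after passing to a subsequence they converge uniformly to a continuous path $x$; since $\partial_t u_i\to X_{H_t}(u_i)$ uniformly, $x$ is $C^1$ and satisfies $\dot x(t)=X_{H_t}(x(t))$, $x(0)\in L^{(0)}$, $x(1)\in L^{(1)}$. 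Then $x(1)=\phi_H^1(x(0))=\psi_H(x(0))\in\psi_H(L^{(0)})$, so $x(1)\in\psi_H(L^{(0)})\cap L^{(1)}$, contradicting the hypothesis $\psi_H(L^{(0)})\cap L^{(1)}=\emptyset$. (Equivalently, under the coordinate change $v=(\phi_H^t)^{-1}(u)$ the limiting slice becomes a point of the disjoint set $L^{(0)}\cap\psi_H^{-1}(L^{(1)})$; this is the passage between the dynamical and geometric pictures.) Hence no such sequence $u_i$ exists, i.e.\ $\mathcal M^K(p,p;0)=\emptyset$ for all sufficiently large $K$.

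The two points needing care, both routine here, are: the $C^0$‑confinement of the $u_i$ to a fixed compact subset despite the noncompact factor $\C$ — handled by the monotonicity lemma together with the energy bound from \eqref{eq:energyHofer} (or by taking $H$ compactly supported near the relevant region) — and the $i$‑uniformity of the interior $\varepsilon$‑regularity constant, which holds because on these sub‑strips the equation (coefficients $J$, $H$, and $\rho_{K_i}\equiv 1$) is independent of $i$. I expect the $C^0$‑confinement to be the main thing to spell out. One could instead run a full Gromov–Floer compactness argument on $u_i|_{[-(K_i-1),K_i-1]}$ — after the coordinate change these are holomorphic strips of bounded energy with boundary on the \emph{disjoint} Lagrangians $L^{(0)}$ and $\psi_H^{-1}(L^{(1)})$, whose limit configuration must be constant and hence lands in $L^{(0)}\cap\psi_H^{-1}(L^{(1)})=\emptyset$ — but that route additionally needs the exclusion of sphere and disk bubbling, which is already available from the transversality and positivity arranged for these low‑dimensional moduli spaces; the single‑slice argument above avoids it.
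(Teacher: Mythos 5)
Your argument is correct and is essentially the same neck‑stretching proof that the paper itself defers to (it simply cites the end of p.~901 of \cite{oh:mrl}): a uniform energy bound from \eqref{eq:energyHofer} together with a pigeonhole over unit $\tau$‑intervals produces a slice where $\partial_\tau u_i\to 0$, whose Arzel\`a--Ascoli limit is a Hamiltonian chord from $L^{(0)}$ to $L^{(1)}$ and hence a point of $\psi_H(L^{(0)})\cap L^{(1)}=\emptyset$, a contradiction. The $C^0$‑confinement worry you flag is real but mild here: since one may take $H$ compactly supported (it displaces a compact set), and only the chosen slices --- which begin on the compact Lagrangian $L^{(0)}$ and have small $t$‑derivative after the coordinate change --- need to stay in a fixed compact region, the standard monotonicity argument suffices.
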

\begin{proof}
We refer to the end of p. 901 of \cite{oh:mrl} for its proof.
\end{proof}

So the boundary of the compactified moduli space
$$
\overline{\mathcal M}^{para}(q,p;0)
$$
consist of the following three types:
\begin{enumerate}
\item $K=0$.
\item
\begin{equation}\label{leftslide}
\bigcup_{q} \overline{\mathcal M}(p,q;B_1) \times \mathcal M^{para}(q,p;B_2), \, B_1 + B_2 = 0.
\end{equation}
\item
\begin{equation}\label{rightslide}
\bigcup_{q}\mathcal M^{para}(p,q;B_1) \times \overline{\mathcal M}(q,p:B_2), \, B_1 + B_2 = 0.
\end{equation}
\end{enumerate}
The case $(1)$ gives rise to exactly one element. (That is the constant map, $u\equiv p$.)
Therefore the sum of the numbers of the boundaries of type
(\ref{leftslide}) and of (\ref{rightslide}) must be odd. We will show that
this is impossible.
\par
For this purpose, we examine each of the two types in detail.
We first consider the case of (\ref{leftslide}).
Let
$$
(v,(u,K_0)) \in \overline{\mathcal M}(p,q;B_1) \times \mathcal M^{para}(q,p;B_2).
$$
The energy bound
\eqref{eq:energyHofer} and \eqref{eqn:energy} yield the inequality
$$
0 \leq E_{(J,H;\rho_K)}(u) \leq \int u^*\omega + \|H\|
$$
for any solution of \eqref{para} for any $K$.
Therefore from $\Vert H\Vert = S - c < S$ and \eqref{dist}, we derive
\begin{equation}\label{ineq:energy}
\int u^*\omega \ge - \|H\|  = -S + c.
\end{equation}
\par
Since we consider an element $v$ of $\overline{\mathcal M}(p,q;B_1)$
whose virtual dimension is $0$, the element $v$ must be of the product form into
\begin{enumerate}
\item[{(a)}] $(v_1,\{pt\})$ with $v_1: \R \times [0,1] \to \C$ holomorphic
\item[{(b)}] $(\{pt\},v_2)$ with $v_2:\R \times [0,1] \to S^2(1)$ holomorphic.
\end{enumerate}

First consider the case (a).
We note that there are three bounded
connected components of $\C \setminus (S^1(S) \cup S^1_2(S))$.
Two of them say $D_1,\, D_2$ have small area and the third one $D_3$ has area $S - \epsilon$
for some $\epsilon > 0$. By choosing a small isometry that we uses
in the beginning to define $L_1$, we may choose $\epsilon \leq \frac{c}{3}$ so that
$$
\epsilon + \frac{c}{3} + \Vert H \Vert < S - \frac{c}{3}.
$$
\par
For this choice of $\e> 0$, we claim $D_3$ cannot appear in the compactification of $\mathcal
M^{para}(p,p;0)$. In fact, if it did, $u \in \mathcal M^{para}(q,p;B_2)$ and $D_3$ would give an
element of this compactification that lead to
$$
0 = \int u^* \omega + \int v^*\omega \geq -S + c + Area(D_3) \geq
-S + c + S - \frac{c}{3} = \frac{2c}{3} > 0,
$$
a contradiction.
\par
Note that $D_1$ and $D_2$ have the same area $\int_{D_i} \omega = \omega(B_1)$.
Therefore the end of $\mathcal M^{para}(q,p;0)$ will come in a pair
of the form $(v_\pm,(u,K_0))$ contained in
$$
\overline{\mathcal M}(p,q;B_1) \times \mathcal M^{para}(q,p;B_2)
$$
so that for each given
$(u,K_0) \in \mathcal M^{para}(q,p;B_2)$, there is a pair
$v_-,\, v_+ \in \overline{\mathcal M}(p,q;B_1)$.
Therefore the cardinality of this set must be even.
\par
For the case (b), similarly the end element again comes in a pair of the
form $((u,K_0),v_\pm)$ with the  the same area $\int v_-^*\omega = \int v_+^*\omega$.
(\emph{Here we use the fact that we put the equator on $S^2(1)$.})
Again the cardinality of this set is even.
\par
By the same argument, (\ref{rightslide}) consists of even number of points.
This is a contradiction.
\end{proof}

\section{Comparison of two Cauchy-Riemann equations and coordinate change}
\label{sec:comparison}

In this section, we explain what the \emph{coordinate change} we called in
the introduction means. To describe it precisely, we
briefly recall the Novikov covering spaces of the path spaces joining Lagrangian submanifolds,
on which the action functional will be defined.

We denote the path space by
$$
\Omega (L^{(0)},L^{(1)})= \{ \ell : [0,1] \to X ~\vert~ \ell(0) \in L^{(0)}, ~\ell(1) \in L^{(1)} \}.
$$
We first recall the action one-form $\alpha$ on $\Omega(L^{(0)},L^{(1)})$
defined by
\be\label{eq:actionform}
\alpha(\ell)(\xi) = \int_0^1 \omega(\dot \ell(t),\xi(t))\, dt.
\ee
This form is a `closed' one-form but not `exact' in general.
Due to the presence of the multi-valuedness of the associated action
functional we consider the Novikov covering space $\widetilde \Omega(L^{(0)},L^{(1)};\ell_a)$
of the connected component
$
\Omega(L^{(0)}, L^{(1)}; \ell_a)
$
containing a chosen base path $\ell_a \in \Omega(L^{(0)},L^{(1)})$ for each
$$
a \in \pi_0(\Omega(L^{(0)},L^{(1)})),
$$
and consider
its associated action functional
\be\label{eq:geom-action}
\CA_{\ell_a}([\ell,w]) = \int w^*\omega \quad \text{for} \quad [\ell, w] \in \Omega(L^{(0)},L^{(1)};\ell_a).
\ee
A simple computation shows that
\be\label{eq:dCA}
d\CA_{\ell_a} = - \pi^*\alpha
\ee
on $\widetilde \Omega(L^{(0)},L^{(1)};\ell_a)$.
Here $[\ell,w] \in \widetilde \Omega(L^{(0)}, L^{(1)}; \ell_a)$ is an
equivalence class of the pair $(\ell,w)$
of $\ell \in \Omega(L^{(0)},L^{(1)};\ell_a)$
and $w:[0,1]^2 \to X$ satisfying
$$
w(s,0) \in L^{(0)}, \, w(s,1) \in L^{(1)}, \, w(0,t) = \ell_a(t), \, w(1,t) = \ell(t)
$$
and  $\pi: \widetilde \Omega(L^{(0)},L^{(1)};\ell_a)
\to \Omega(L^{(0)},L^{(1)})$ is the natural projection given by $[\ell,w] \mapsto \ell$.
The equivalence relation is given as follows; $(\ell,w_1) \sim (\ell,w_2)$ if and only if
\be\label{eq:novikov}
\omega([\overline w_1 \# w_2]) = 0 = \mu_{L^{(0)}L^{(1)}}(\overline w_1 \# w_2)
\ee
where $\mu_{L^{(0)}L^{(1)}}$ is the Maslov index of the annulus map
$\overline w \# w': S^1 \times [0,1] \to X$ with boundary lying on $L^{(0)}$ at $t=0$ and
on $L^{(1)}$ at $t = 1$.
We refer to Definition 2.2.4 \cite{fooo:book} for the precise definition of
$\widetilde \Omega(L^{(0)}, L^{(1)}; \ell_a)$.

\begin{rem}\label{rem:connectedcomp}
In this section and the next, we pick and discuss one connected component of $\Omega(L^{(0)},L^{(1)})$
and its Novikov covering space without loss of generality.
In Section \ref{sec:correction}, we will consider
all connected components to study Floer chain complex
which is in fact a direct sum of Floer chain complex
for each connected component.
\end{rem}
\par\smallskip
Now
for a pair $(L^{(0)},L^{(1)})$ of compact
Lagrangian submanifolds we consider the Hamiltonian  deformation $(L^{(0)\prime},L^{(1)\prime})$ given by
$$
L^{(0)\prime} \in \frak{Iso}(L^{(0)}), \quad L^{(1)\prime} \in \frak{Iso}(L^{(1)}).
$$
We also consider a family $J^s = \{J^s_t\}_{0 \leq t \leq 1}$
of $\omega$-compatible almost complex structures.
We take Hamiltonian isotopies $\phi_{H^{(0)}} = \{\phi_{H^{(0)}}^s\}_{0 \leq s\leq 1},\,
\phi_{H^{(1)}} = \{\phi_{H^{(1)}}^s\}_{0 \leq s\leq 1}$ such that
\be\label{eq:psi01}
\phi_{H^{(0)}}^1(L^{(0)}) = L^{(0)\prime}, \quad \phi_{H^{(1)}}^1(L^{(1)}) = L^{(1)\prime}.
\ee
For a given pair of Hamiltonian isotopies
$\phi_{H^{(i)}}$ for $i = 0, \, 1$, $J^s =
\{J^s_t\}$, and a given smooth function $\rho$ as in
Section \ref{sec:notation}, we consider moving Lagrangian boundary
value problem \be\label{eq:moving}
\begin{cases}\dudtau + J^\rho \dudt = 0 \\
u(\tau,0) \in \phi_{H^{(0)}}^{\rho(\tau)}(L^{(0)}), \, u(\tau,1) \in
\phi_{H^{(1)}}^{\rho(\tau)}(L^{(1)})
\end{cases}
\ee where $J^\rho(\tau,t) = J^{\rho(\tau)}_t$.
Let $[\ell_p,w] \in \widetilde \Omega(L^{(0)},L^{(1)};\ell_a)$ and $[\ell_{q'},w']
\in
\widetilde \Omega(L^{(0)\prime},L^{(1)\prime};\ell_a^{\prime})$.
Here we choose
\be
\ell_a' (t) :=\phi_{H^{(1)}}^1(\phi^{1-t}_{H^{(1)}})^{-1} \circ \phi_{H^{(0)}}^1(\phi^t_{H^{(0)}})^{-1} (\ell_a(t))
\ee
as the base path of
$\Omega (L^{(0)\prime},L^{(1)\prime})$.
See \eqref{eq:gauge1} and \eqref{eq:gauge2} below.
(We used the notation $\ell_0^{(\psi^{(0)},\psi^{(1)})}$
in \cite{fooo:book}.)
We denote by
\be\label{eq:MMmoving}
\CM^\rho((L^{(1)},\phi^1_{H^{(1)}}),(L^{(0)},\phi^1_{H^{(0)}});[\ell_p,w],[\ell_{q'},w'])
\ee
the set of solutions of \eqref{eq:moving} with
$$
[\ell_{q'}, I_{\phi^1_{H^{(0)}},\phi^1_{H^{(1)}}}^{\rho} (w\# u)] = [\ell_{q'},w'],
$$
where
$w \# u:[0,1] \times [0,1] \to X$ is the concatenation in the $\tau$-direction of
$w$ and $u$ and
$$
I_{\phi^1_{H^{(0)}},\phi^1_{H^{(1)}}}^{\rho}v (\tau,t) = \left(\phi_{H^{(1)}}^1(\phi_{H^{(1)}}^{\rho(\tau)(1-t)})^{-1} \circ \phi_{H^{(0)}}^1(\phi_{H^{(0)}}^{\rho(\tau)t})^{-1}\right) v (\tau, t).
$$ (see p. 308 in \cite{fooo:book}).
We note that we do not use the equation
\eqref{eq:moving} of {\it moving} Lagrangian boundary
value problem
and the moduli space
\eqref{eq:MMmoving} themselves in this article,  while in
Subsection 5.3.2 of \cite{fooo:book} we used them for construction of a filtered
$A_{\infty}$ bimodule homomorphism.
\par
The main goal of Section \ref{sec:comparison} - Section \ref{sec:correction} is to construct a suitable filtered $A_{\infty}$ bimodule
homomorphism, hence a chain map by considering the Hamiltonian perturbed Cauchy-Riemann equation
\eqref{eq:CRJH} different from \eqref{eq:moving},
when the pair $(L^{(0)},L^{(1)})$ is unobstructed in the sense of
Lagrangian Floer cohomology theory,
$$
C(L^{(1)},L^{(0)};\Lambda_{\text{\rm nov}}) \to
C(L^{(1)\prime},L^{(0)\prime};\Lambda_{\text{\rm nov}})
$$
in the point of view of filtration changes.
See the beginning of Section \ref{sec:correction}
for a quick review of
$C(L^{(1)},L^{(0)};\Lambda_{\text{\rm nov}})$.
The reason why we use the equation \eqref{eq:CRJH}
instead of \eqref{eq:moving} to construct a desired
filtered $A_{\infty}$ bimodule homomorphism
is to apply the improved estimates for solutions
of \eqref{eq:CRJH} carried out in Section \ref{sec:improved}.

First, we consider the particular pairs
$$
(L^{(0)},L^{(1)}), \quad (L^{(0)\prime},L^{(1)\prime}) = (\phi_H^1(L^{(0)}), L^{(1)})
$$
to explain the meaning of the {\it coordinate change}.
These particular pairs correspond to the special case $H^{(1)} \equiv 0$
in the general  discussion above.

We would like to compare the \emph{geometric} version
of Floer theory and the \emph{dynamical} one. Such a comparison is by now well-known, and
the case
\be\label{eq:LLphiH1L}
(L^{(0)},L^{(1)}), \quad (\phi_H^1(L^{(0)}), L^{(1)})
\ee
was exploited previously in \cite{oh:jdg} for the exact Lagrangian submanifolds
on the cotangent bundle in relation to the study of spectral invariants.
Here we need such a study for general compact Lagrangian submanifolds on
general $(X,\omega)$.
\par
The geometric version of the Floer complex for
$(L^{(0)\prime}=\phi_H^1( L^{(0)}), L^{(1)\prime}=
L^{(1)})$ is generated by
the intersection points
$$
\phi_H^1( L^{(0)}) \cap L^{(1)}
$$
and its Floer boundary map is constructed by the moduli space of genuine Cauchy-Riemann
equation
\be\label{eq:CR-geom}
\begin{cases}
\frac{\del u'}{\del \tau} + J' \frac{\del u'}{\del t} = 0\\
u'(\tau,0) \in \phi_H^1(L^{(0)}),\quad u'(\tau,1) \in L^{(1)}.
\end{cases}
\ee
Here $J'=J'_t = (\phi_H^1 (\phi_H^t)^{-1})_{\ast}J_t$.
We denote by $\MM(L^{(1)}, \phi_H^1(L^{(0)});J')$ the moduli space of finite
energy solutions of this equation.
Due to the presence of the multi-valuedness of the associated action
functional, we need to consider these equations
on the Novikov covering space of some specified connected component
$$
\Omega(\phi_H^1(L^{(0)}), L^{(1)}; \ell_a')
$$
with the base path $\ell_a' \in \Omega(\phi_H^1(L^{(0)}), L^{(1)})$, which is given by
\be\label{eq:ellprime}
\ell_a'(t)=\phi_H^1(\phi_H^t)^{-1}(\ell_a(t)),
\ee
and consider the
action functional
\be\label{eq:geom-action}
 \CA_{\ell_a'}([\ell',w'])= \int (w')^*\omega
\ee
where $[\ell',w'] \in \widetilde \Omega(\phi_H^1(L^{(0)}), L^{(1)}; \ell_a')$ and
$w':[0,1]^2 \to X$ is a map satisfying the boundary condition
$$
w'(0,t) = \ell_a'(t),\, w'(1,t) = \ell'(t), \, w'(s,0) \in \phi_H^1(L^{(0)}), \, w'(s,1) \in L^{(1)}.
$$
On the other hand the dynamical version of the Floer complex is generated by
the solutions of Hamilton's equation
\be\label{eq:hameq}
\dot x = X_H(t,x), \quad x(0) \in L^{(0)}, \, x(1) \in L^{(1)}
\ee
and its boundary map is constructed by the moduli space of perturbed Cauchy-Riemann
equation
\be\label{eq:CRJH}
\begin{cases}
\frac{\del u}{\del \tau} + J \left(\frac{\del u}{\del t} - X_H(u) \right) = 0\\
u(\tau,0) \in L^{(0)}, \quad u(\tau,1) \in L^{(1)}.
\end{cases}
\ee
We denote by $\MM(L^{(1)},L^{(0)};H;J)$ the moduli space of finite
energy solutions of this equation.
The action functional $\CA_{H,\ell_a}$ is defined by
\be\label{eq:dynam-action}
\CA_{H,\ell_a}([\ell,w])= \int w^*\omega + \int_0^1 H(t,\ell(t))\, dt
\ee
on $\widetilde\Omega(L^{(0)},L^{(1)};\ell_a)$.

These two Floer theories are related by the following
transformations of the bijective map
$$
\frak g_{H;0}^+:\widetilde\Omega(\phi_H^1(L^{(0)}), L^{(1)}; \ell_a') \to
\widetilde\Omega(L^{(0)},L^{(1)};\ell_a); \quad [\ell',w'] \mapsto [\ell,w]
$$
given by the assignment
\be\label{eq:gauge1}
\ell(t) = \phi_H^t (\phi_H^1)^{-1}(\ell'(t)), \quad
w(s,t)=\phi_H^t (\phi_H^1)^{-1}(w'(s,t)).
\ee
This provides a bijective correspondence of the critical points
\be\label{eq:Crit-bijective}
\Crit \CA_{\ell_a'} \longleftrightarrow \Crit \CA_{H,\ell_a}; \quad [p',w'] \mapsto
[z^{H}_{p'},w]
\ee
with $p' \in \phi_H^1(L^{(0)}) \cap L^{(1)}$, $z^H_{p'}(t):= \phi_H^t (\phi_H^1)^{-1}(p')$ and $w=\phi_H^t (\phi_H^1)^{-1}(w'(s,t))$,
and of the moduli spaces
$$
\MM(L^{(1)},\phi_H^1(L^{(0)});J') \mapsto \MM(L^{(1)},L^{(0)};H;J)
$$
with $J_t = (\phi_H^t(\phi_H^1)^{-1})_*J'_t$ where the map
is defined by
$$
u(\tau,t)= \phi_H^t (\phi_H^1)^{-1}(u'(\tau,t)).
$$
The map $\frak g_{H;0}^+$ also preserves the action up to a constant in that
\begin{lem} \label{lem:gH0+}
Denote
$$
c(H;\ell_a) := \int_0^1 H(t,\ell_a(t))\, dt
$$
which is a constant depending only on $H$ and the base path $\ell_a$ of the connected component
$\Omega(L^{(0)},L^{(1)};\ell_a)$.
Then
\be\label{eq:actiondiff}
\CA_{H,\ell_a}\circ \frak g_{H;0}^+([\ell',w']) = \CA_{\ell_a'}([\ell',w']) +
c(H;\ell_a)
\ee
on $\widetilde{\Omega}(L^{(0)},L^{(1)};\ell_a)$.
\end{lem}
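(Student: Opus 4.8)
The plan is to compute both sides of \eqref{eq:actiondiff} explicitly using the definitions of the two action functionals and the gauge transformation $\frak g_{H;0}^+$, and observe that the discrepancy is precisely the Hamiltonian integral along the base path, which is then cancelled by the choice of base path $\ell_a'$. Concretely, fix $[\ell',w'] \in \widetilde\Omega(\phi_H^1(L^{(0)}),L^{(1)};\ell_a')$ and write $[\ell,w] = \frak g_{H;0}^+([\ell',w'])$, so that $\ell(t) = \phi_H^t(\phi_H^1)^{-1}(\ell'(t))$ and $w(s,t) = \phi_H^t(\phi_H^1)^{-1}(w'(s,t))$ by \eqref{eq:gauge1}. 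By definition \eqref{eq:dynam-action},
$$
\CA_{H,\ell_a}([\ell,w]) = \int w^*\omega + \int_0^1 H(t,\ell(t))\,dt,
$$
so the claim reduces to the two identities
$$
\int w^*\omega = \int (w')^*\omega, \qquad \int_0^1 H(t,\ell(t))\,dt = c(H;\ell_a).
$$

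First I would establish the symplectic-area identity. Let $\psi_t := \phi_H^t(\phi_H^1)^{-1}$, a Hamiltonian isotopy (generated by a time-dependent Hamiltonian built from $H$), so that $w = \psi_t \circ w'$ in the sense $w(s,t) = \psi_t(w'(s,t))$. Differentiating, $\partial_s w = (d\psi_t)(\partial_s w')$ and $\partial_t w = (d\psi_t)(\partial_t w') + \dot\psi_t(w'(s,t))$ where $\dot\psi_t$ is the (Hamiltonian) vector field generating the flow. Hence
$$
w^*\omega = \omega(\partial_s w,\partial_t w)\,ds\wedge dt
= (w')^*\omega + \omega\bigl((d\psi_t)\partial_s w',\, \dot\psi_t\bigr)\,ds\wedge dt,
$$
using that $\psi_t$ is a symplectomorphism for the first term. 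The correction term is a total $s$-derivative: since $\dot\psi_t$ is Hamiltonian with some generating function $G_t$, one has $\omega((d\psi_t)\partial_s w', \dot\psi_t) = -dG_t((d\psi_t)\partial_s w') = -\partial_s\bigl(G_t(w(s,t))\bigr)$, and integrating over $s\in[0,1]$ for fixed $t$ this contributes a boundary term $G_t(w(0,t)) - G_t(w(1,t)) = G_t(\ell_a(t)) - G_t(\ell(t))$ — compare the analogous computation in \cite{oh:jdg}. One then checks, using the precise relation between $G_t$ and $H$ under the reparametrization $\psi_t = \phi_H^t(\phi_H^1)^{-1}$, that this $t$-integrated boundary term equals exactly $\int_0^1 H(t,\ell(t))\,dt - \int_0^1 H(t,\ell_a(t))\,dt = \int_0^1 H(t,\ell(t))\,dt - c(H;\ell_a)$. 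Adding this to $\int (w')^*\omega$ and recalling $\CA_{\ell_a'}([\ell',w']) = \int(w')^*\omega$ gives
$$
\CA_{H,\ell_a}([\ell,w]) = \int w^*\omega + \int_0^1 H(t,\ell(t))\,dt = \int(w')^*\omega + c(H;\ell_a) = \CA_{\ell_a'}([\ell',w']) + c(H;\ell_a),
$$
which is \eqref{eq:actiondiff}.

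The step I expect to be the main obstacle is pinning down the bookkeeping in the correction term: one must track carefully which Hamiltonian generates the reparametrized flow $\psi_t = \phi_H^t(\phi_H^1)^{-1}$ (it is $-H(t,\cdot)$ precomposed with a shift, up to the standard sign conventions) and verify that the boundary contributions at $s=0$ conspire to produce precisely the constant $c(H;\ell_a)$ rather than some other path-dependent quantity — this is exactly where the specific choice of base path $\ell_a'$ in \eqref{eq:ellprime} is used, so that the $s=0$ endpoint of $w$ is genuinely $\ell_a$. Once the sign and reparametrization conventions are fixed consistently with those of \cite{fooo:book} and \cite{oh:jdg}, the remaining manipulations are the routine Stokes-type computation sketched above, and well-definedness on the Novikov covers is automatic since both sides are invariant under the equivalence relation \eqref{eq:novikov} (the area and Maslov index being the only invariants involved).
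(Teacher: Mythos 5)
Your overall strategy coincides with the paper's: apply the chain rule to $w = \psi_t\circ w'$ with $\psi_t = \phi_H^t(\phi_H^1)^{-1}$, observe that the cross term in $w^*\omega$ is a total $s$-derivative, and convert it into a boundary contribution evaluated at $w(0,t)=\ell_a(t)$ and $w(1,t)=\ell(t)$. However, your opening "reduction to two identities" is not correct: neither $\int w^*\omega = \int(w')^*\omega$ nor $\int_0^1 H(t,\ell(t))\,dt = c(H;\ell_a)$ holds in general, only their combined identity does. You effectively abandon this reduction once you compute the correction term, but the framing is misleading.

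The more substantive problem is the identification of the Hamiltonian $G_t$ generating $\psi_t$. Since $(\phi_H^1)^{-1}$ is a fixed, $t$-independent symplectomorphism, differentiating $\psi_t(x) = \phi_H^t\bigl((\phi_H^1)^{-1}(x)\bigr)$ gives $\dot\psi_t(x) = X_{H_t}(\psi_t(x))$; the generator is $H_t$ exactly — no sign flip and no time shift. (Contrast $\mathfrak{g}^+_{\widetilde H;1}$ from \eqref{eq:gauge2}, where the flow $t\mapsto \phi_H^{1-t}(\phi_H^1)^{-1}$ really is generated by $\widetilde H(t,x)=-H(1-t,x)$, because there the conjugation involves a $t$-dependent time reversal; your hint "it is $-H(t,\cdot)$ precomposed with a shift" confuses the two cases.) With the paper's convention $dH=\omega(X_H,\cdot)$ and $G_t = H_t$, the correction you computed is $\int w^*\omega - \int(w')^*\omega = \int_0^1\bigl[H_t(\ell_a(t)) - H_t(\ell(t))\bigr]\,dt = c(H;\ell_a) - \int_0^1 H(t,\ell(t))\,dt$, the opposite of what you wrote. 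This sign is not harmless: if the boundary term were $\int_0^1 H(t,\ell(t))\,dt - c(H;\ell_a)$ as you assert, substituting back would yield $\CA_{H,\ell_a}([\ell,w]) = \int(w')^*\omega + 2\int_0^1 H(t,\ell(t))\,dt - c(H;\ell_a)$, which is not $\CA_{\ell_a'}([\ell',w']) + c(H;\ell_a)$. Your final displayed equality is the correct one, but it does not follow from your stated intermediate step; once the sign of the boundary term is fixed, the argument matches the paper's computation.
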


\begin{rem}\label{rem:constant}
Since we normalized Hamiltonians so that
$\int_X H_t \omega^n =0$ for each $t$ in Section
\ref{sec:notation},
we can take $\ell_a$ for each connected component
of $\Omega(L^{(0)},L^{(1)})$ in such a way
that $c(H;\ell_a)=\int_0^1 H(t,\ell_a (t) )dt = 0$.
It is not essential to choose $\ell_a$ in a way as above.
In fact, if we take a based path so that $c(H;\ell_a) \ne 0$,
it is enough to include an extra term $c(H;\ell_a)$ in the energy estimate
on $\Omega(L^{(0)},L^{(1)};\ell_a)$
when we apply the coordinate change $\frak g_{H;0}^{+}$ or its inverse.
Since we will consider all connected components of $\Omega(L^{(0)},L^{(1)})$
in Section \ref{sec:correction} (see Remark \ref{rem:connectedcomp}),
we have to add the constant
$c(H;\ell_a)$  for  each connected component  $\Omega(L^{(0)},L^{(1)};\ell_a)$.
Thus, to avoid heavy notation, we simply choose $\ell_a$ so that  $c(H;\ell_a) =0$ for each
connected component  $\Omega(L^{(0)},L^{(1)};\ell_a)$.
\end{rem}

\begin{proof} The proof is by a direct calculation.
Let $[\ell',w'] \in \widetilde\Omega(\phi_H^1(L^{(0)}), L^{(1)}; \ell_a')$.
Then
$$
\CA_{H;\ell_a}(\frak g_{H;0}^+([\ell',w']))  =
\int w^*\omega + \int_0^1 H(t,\ell(t))\, dt
$$
and
$$
w^*\omega = \omega \left(\frac{\del w}{\del s},\frac{\del w}{\del t}\right) ds \wedge dt.
$$
We compute
\beastar
\frac{\del w}{\del s} &= &d(\phi_H^t(\phi_H^1)^{-1})\left(\frac{\del w'}{\del s}\right)\\
\frac{\del w}{\del t} &= &d(\phi_H^t(\phi_H^1)^{-1})\left(\frac{\del w'}{\del t}\right) + X_{H_t}(w(s,t)).
\eeastar
Substituting this into the above, we obtain
\beastar
\int w^*\omega & = &\int_0^1\int_0^1 \omega\left(\frac{\del w}{\del s},\frac{\del w}{\del t}\right)\, ds\,dt\\
&=& \int_0^1\int_0^1 \omega\left(\frac{\del w'}{\del s},\frac{\del w'}{\del t}\right)\, ds\,dt \\
& {} & \qquad + \int_0^1\int_0^1\omega\left(d\left(\phi_H^t(\phi_H^1)^{-1}\right)\left(\frac{\del w'}{\del s}\right),
X_{H_t}(w(s,t))\right)\, ds\, dt \\
& = & \int (w')^*\omega - \int_0^1\int_0^1 dH_t(w(s,t))\left(d(\phi_H^t(\phi_H^1)^{-1})
\frac{\del w'}{\del s}\right) \, ds\,dt\\
& = & \int (w')^*\omega - \int_0^1 \int_0^1 \frac{\del}{\del s} H_t(w(s,t))\, ds\, dt \\
& = & \int (w')^*\omega - \int_0^1 H_t(w(1,t)) \, dt + \int_0^1 H_t(w(0,t))\, dt.
\eeastar
Substituting this into the above definition of $\CA_{H;\ell_a}(\frak g_{H;0}^+([\ell',w']))$,
the proof is finished.
\end{proof}

We denote by $\frak g_{H;0}^-$ the inverse $\frak g_{H;0}^- = (\frak g_{H;0}^+)^{-1}$.
The outcome of the above discussion is that the two associated Floer cohomologies
are isomorphic to each other.

\par
So far we have moved the first argument $L^{(0)}$ in the pair $(L^{(0)},L^{(1)})$.
We can also move the second argument $L^{(1)}$ instead. In that case, we define the coordinate
change transformation by
$$
\frak g_{\widetilde H;1}^+:\widetilde\Omega(L^{(0)}, \phi_H^1(L^{(1)}); \ell_a') \to
\widetilde\Omega(L^{(0)},L^{(1)};\ell_a); \quad [\ell',w'] \mapsto [\ell,w]
$$
given by the assignment
\be\label{eq:gauge2}
\ell(t) = \phi_H^{1-t}(\phi_H^1)^{-1}(\ell'(t)), \quad
w(s,t)=\phi_H^{1-t} (\phi_H^1)^{-1}(w'(s,t))
\ee
where $\widetilde H(t,x) : = - H(1-t,x)$ is the Hamiltonian
generating the latter Hamiltonian path $ t \mapsto \phi_H^{1-t} (\phi_H^1)^{-1}$.
This provides a bijective correspondence
$$
\Crit \CA_{\ell_a'} \longleftrightarrow \Crit \CA_{\widetilde H,\ell_a}
$$
and the moduli spaces
$$
\MM(\phi_H^1(L^{(1)}),L^{(0)};J') \mapsto \MM(\widetilde H;L^{(1)},L^{(0)};J)
$$
with $\widetilde J_t = (\phi_H^{1-t}(\phi_H^1)^{-1})_*J'_t$.
Here $\MM(\widetilde H;L^{(1)},L^{(0)};\widetilde J)$ is the moduli space of solutions of
\be\label{eq:CRtildeHJ}
\begin{cases}
\dudtau + \widetilde J\left(\dudt - X_{\widetilde H}(u)\right) = 0 \\
u(\tau,0) \in L^{(0)}, \, u(\tau,1) \in L^{(1)}.
\end{cases}
\ee
The action functional $\CA_{\widetilde H,\ell_a}$ is given by
\be\label{eq:CAtildeH}
\CA_{\widetilde H, \ell_a}([\ell,w]) =
\int (\widetilde w)^*\omega + \int_0^1 \widetilde H(t,\ell(t))\,d t.
\ee
The explicit formula of the
latter correspondence is given by
$$
u(\tau,t)) = \phi_H^{1-t} (\phi_H^1)^{-1}(u'(\tau,t)).
$$
Again the following can be proved by a similar computations used to prove Lemma \ref{lem:gH0+}
whose proof is left to the readers.

\begin{lem}\label{lem:gH1+} We have
\be\label{eq:tildeactiondiff}
\CA_{\widetilde H,\ell_a}\circ \frak g_{\widetilde H;1}^+([\ell',w']) = \CA_{\ell_a'}([\ell',w']) +
c(\widetilde H;\ell_a)
\ee
where
$$
c(\widetilde H;\ell_a) := \int_0^1 \widetilde H (t,\ell_a(t))\, dt
$$
is a constant depending only on $H$ and the base path $\ell_a$.
\end{lem}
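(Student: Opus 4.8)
The plan is to prove Lemma \ref{lem:gH1+} by a direct computation entirely parallel to that of Lemma \ref{lem:gH0+}, the only new feature being the bookkeeping of the time reversal. The key observation is that the path of symplectomorphisms $\Psi_t := \phi_H^{1-t}(\phi_H^1)^{-1}$ appearing in the definition \eqref{eq:gauge2} of $\frak g_{\widetilde H;1}^+$ satisfies $\Psi_0 = \mathrm{id}$, $\Psi_1 = (\phi_H^1)^{-1}$, and $\frac{d}{dt}\Psi_t(x) = -X_{H_{1-t}}(\Psi_t(x)) = X_{\widetilde H_t}(\Psi_t(x))$, i.e. $t\mapsto\Psi_t$ is precisely the Hamiltonian isotopy generated by $\widetilde H_t := \widetilde H(t,\cdot) = -H(1-t,\cdot)$. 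Thus $w(s,t) = \Psi_t(w'(s,t))$ is formally the same transformation as in \eqref{eq:gauge1} with $H$ replaced by $\widetilde H$ and $\phi_H^t(\phi_H^1)^{-1}$ replaced by $\Psi_t$. First I would record that the boundary conditions are consistent: $w(s,0) = w'(s,0)\in L^{(0)}$ since $\Psi_0 = \mathrm{id}$, $w(s,1) = (\phi_H^1)^{-1}(w'(s,1))\in L^{(1)}$ since $w'(s,1)\in\phi_H^1(L^{(1)})$, and $w(0,t) = \Psi_t(\ell_a'(t)) = \ell_a(t)$ by the choice of $\ell_a'$; hence $[\ell,w]$ genuinely lies in $\widetilde\Omega(L^{(0)},L^{(1)};\ell_a)$, and (as in Lemma \ref{lem:gH0+}) it respects the Novikov equivalence relation, so $\frak g_{\widetilde H;1}^+$ is well defined.

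Next I would substitute $w = \Psi_t\circ w'$ into $\int w^*\omega = \int_0^1\int_0^1 \omega(\partial_s w,\partial_t w)\,ds\,dt$. Writing $\partial_s w = d\Psi_t(\partial_s w')$ and $\partial_t w = d\Psi_t(\partial_t w') + X_{\widetilde H_t}(w)$ and using that each $\Psi_t$ is symplectic, one obtains $\int w^*\omega = \int (w')^*\omega + \int_0^1\int_0^1 \omega\bigl(d\Psi_t(\partial_s w'),\,X_{\widetilde H_t}(w)\bigr)\,ds\,dt$. With the convention $dH = \omega(X_H,\cdot)$ from Section \ref{sec:notation} one has $\omega(\xi,X_{\widetilde H_t}) = -d\widetilde H_t(\xi)$, and by the chain rule $d\widetilde H_t|_w\bigl(d\Psi_t(\partial_s w')\bigr) = \partial_s\bigl(\widetilde H_t(w(s,t))\bigr)$, so the double integral telescopes in $s$ to $-\int_0^1\bigl[\widetilde H_t(w(1,t)) - \widetilde H_t(w(0,t))\bigr]\,dt = -\int_0^1 \widetilde H_t(\ell(t))\,dt + \int_0^1 \widetilde H_t(\ell_a(t))\,dt$, using $w(1,t) = \ell(t)$, $w(0,t) = \ell_a(t)$. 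Plugging this into $\CA_{\widetilde H,\ell_a}([\ell,w]) = \int w^*\omega + \int_0^1 \widetilde H_t(\ell(t))\,dt$ from \eqref{eq:CAtildeH}, the two copies of $\int_0^1 \widetilde H_t(\ell(t))\,dt$ cancel and one is left with $\CA_{\widetilde H,\ell_a}\circ\frak g_{\widetilde H;1}^+([\ell',w']) = \int (w')^*\omega + \int_0^1 \widetilde H_t(\ell_a(t))\,dt = \CA_{\ell_a'}([\ell',w']) + c(\widetilde H;\ell_a)$, which is the assertion.

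I do not expect a conceptual obstacle here; the argument is a verbatim adaptation of Lemma \ref{lem:gH0+}. The only points requiring care are the consistent tracking of (i) the time reversal, in particular that the isotopy $\Psi_t$ is generated by $\widetilde H_t = -H_{1-t}$ and runs from $\mathrm{id}$ to $(\phi_H^1)^{-1}$, and (ii) the sign $\omega(\cdot,X_{\widetilde H}) = -d\widetilde H(\cdot)$. One should also pin down the meaning of the symbol $\widetilde w$ in \eqref{eq:CAtildeH}: if it denotes a $t$-reparametrization $\widetilde w(s,t) = w(s,1-t)$ then $\int\widetilde w^*\omega = -\int w^*\omega$, and one must check that this orientation flip is matched by the corresponding reversal built into the base path $\ell_a'$ and the capping convention on the $\phi_H^1(L^{(1)})$-side; with the conventions of the excerpt I expect $\widetilde w$ simply denotes $w$, so this is a non-issue. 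In any case these are routine sign and convention checks of exactly the kind already settled in \cite{fooo:book}, which is why the authors leave the computation to the reader.
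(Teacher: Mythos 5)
Your proof is correct and is precisely the ``similar computation'' the paper defers to the reader: it mirrors the explicit proof of Lemma~\ref{lem:gH0+} word for word once one observes that $\Psi_t = \phi_H^{1-t}(\phi_H^1)^{-1}$ is the Hamiltonian isotopy generated by $\widetilde H_t = -H_{1-t}$, so that $\partial_t w = d\Psi_t(\partial_t w') + X_{\widetilde H_t}(w)$ and the same integration-by-parts in $s$ applies. Your reading that $\widetilde w$ in \eqref{eq:CAtildeH} is just $w$ (rather than a $t$-reversal) is the correct interpretation, consistent with \eqref{eq:dynam-action}.
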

We denote by $\frak g_{\widetilde H;1}^-$ the inverse of $\frak g_{\widetilde H;1}^+$.

\section{Improved energy estimate}
\label{sec:improved}

First we consider the case of varying the first argument $L^{(0)}$
of the pair $(L^{(0)}, L^{(1)})$
$
(L^{(0)\prime},L^{(1)\prime}) = (\phi_H^1(L^{(0)}),L^{(1)}).
$
In this case, as far as the study of the optimal filtration change is concerned,
employing the moduli space with moving Lagrangian boundary is not the best one.
We will show that employing the standard \emph{perturbed} Cauchy-Riemann equation
by Hamiltonian vector fields with \emph{fixed} Lagrangian boundaries,
which intertwines the geometric version and the dynamical version of the
Floer complex, gives a stronger energy estimate which
gives rise to the optimal change of filtration.

Let $\rho$ be one of smooth functions on $\R$ of the three types
introduced in Section \ref{sec:notation}.
See \eqref{eqn:rho} and \eqref{eq:rho}.
Consider the perturbed Cauchy-Riemann equation
\begin{equation}\label{eq:CRJH}
\begin{cases}
\frac{\partial u}{\partial \tau} + J
\left(\frac{\partial u}{\partial t} -  \rho (\tau) X_H(u)\right) =0\\
u(\tau,0) \in L^{(0)},\, u(\tau,1) \in L^{(1)}
\end{cases}
\end{equation}
with the finite energy
$
E_{(J,H,\rho)}(u) < \infty.
$
The following a priori energy bound is a key ingredient in relation to
the lower bound of displacement energy. This kind of optimal estimate
is originally due to Chekanov \cite{chekanov}, which is
the key calculation that relates the energy and the Hofer norm
in an optimal way.  For completeness' sake, we include its proof
which is a slight variation of the calculation carried out in p. 901 \cite{oh:mrl}.
It is useful to decompose $\|H\|$ into two parts
$$
E^-(H) = \int_0^1 -\min \; H_t\, dt, \quad
E^+(H) = \int_0^1 \max \; H_t\, dt,
$$
which are so called the negative and positive part of the Hofer norm $\|H\|$.

\begin{lem}\label{lem:energyestimate}
Let $\rho = \rho_+$  as in \eqref{eqn:rho}.
Let $u$ be any finite energy solution of \eqref{eq:CRJH}. Then we have
\bea\label{eq:EJrho}
E_{(J,H,\rho)} (u) & = & \int u^*\omega + \int_0^1 H(t, u(\infty,t))\, dt \nonumber\\
&{}& \quad - \int_{-\infty}^{\infty} \rho'(\tau) \int_0^1 (H_t \circ u)\,dt\,d\tau.
\eea
\end{lem}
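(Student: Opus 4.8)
The plan is to compute the energy identity directly by expanding the definition of $E_{(J,H,\rho)}(u)$ and using the equation \eqref{eq:CRJH} together with integration by parts, exactly in the spirit of the calculation on p.~901 of \cite{oh:mrl}. First I would write
$$
E_{(J,H,\rho)}(u) = \frac12 \int_{\R\times[0,1]} \left( \left|\frac{\del u}{\del\tau}\right|^2 + \left|\frac{\del u}{\del t} - \rho(\tau) X_H(u)\right|^2 \right) d\tau\, dt,
$$
and then use \eqref{eq:CRJH} to replace $\frac{\del u}{\del\tau} = -J(\frac{\del u}{\del t} - \rho X_H(u))$, so that the two summands in the integrand are equal (since $J$ is $\omega$-compatible, hence an isometry for the metric $\omega(\cdot, J\cdot)$). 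Thus $E_{(J,H,\rho)}(u) = \int_{\R\times[0,1]} |\frac{\del u}{\del\tau}|^2\, d\tau\, dt$, and rewriting one factor via the equation gives
$$
E_{(J,H,\rho)}(u) = \int_{\R\times[0,1]} \omega\!\left(\frac{\del u}{\del\tau}, \frac{\del u}{\del t} - \rho(\tau) X_H(u)\right) d\tau\, dt.
$$

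Next I would split this into two terms. The first, $\int_{\R\times[0,1]} \omega(\frac{\del u}{\del\tau}, \frac{\del u}{\del t})\, d\tau\, dt$, is precisely $\int u^*\omega$. For the second term, using $dH_t = \omega(X_{H_t}, \cdot)$ one gets $\omega(\frac{\del u}{\del\tau}, -\rho(\tau) X_H(u)) = \rho(\tau)\, dH_t(\tfrac{\del u}{\del\tau}) = \rho(\tau) \frac{\del}{\del\tau}\big(H_t(u(\tau,t))\big)$, where one must be slightly careful that $H$ is $t$-dependent but not $\tau$-dependent, so $\frac{\del}{\del\tau}(H_t\circ u) = dH_t(\frac{\del u}{\del\tau})$ with no extra term. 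Integrating by parts in $\tau$ over $\R$,
$$
\int_{\R} \rho(\tau) \frac{\del}{\del\tau}\big(H_t(u(\tau,t))\big)\, d\tau = \Big[\rho(\tau) H_t(u(\tau,t))\Big]_{\tau=-\infty}^{\tau=+\infty} - \int_{\R} \rho'(\tau) H_t(u(\tau,t))\, d\tau.
$$
Since $\rho = \rho_+$ vanishes near $-\infty$ and equals $1$ near $+\infty$, the boundary term is $H_t(u(+\infty,t))$. Integrating over $t\in[0,1]$ and reassembling yields exactly \eqref{eq:EJrho}.

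The only genuinely delicate point is justifying the manipulations at $\tau = \pm\infty$: one needs that $u(\tau,t)$ converges to the asymptotic Hamiltonian chords as $\tau \to \pm\infty$, with exponential decay of $\frac{\del u}{\del\tau}$, which is where the finite energy hypothesis $E_{(J,H,\rho)}(u) < \infty$ enters — this is the standard asymptotic analysis for finite-energy solutions of such perturbed Cauchy--Riemann equations (removable singularity / exponential convergence), and it legitimizes both the integration by parts over the noncompact $\tau$-line and the evaluation of the boundary term. I expect this asymptotic justification to be the main (though routine) obstacle; the algebraic identity itself is a one-line computation once the boundary behavior is under control. I would simply cite the relevant asymptotic convergence results (as in \cite{oh:mrl}) rather than reprove them, and note that the finiteness of $\int u^*\omega$ is automatic from the energy bound and the identity itself.
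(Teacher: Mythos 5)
Your proposal is correct and follows essentially the same computation as the paper: expand the energy using the equation to reduce to $\int|\partial u/\partial\tau|^2$, rewrite as $\int\omega(\partial u/\partial\tau,\partial u/\partial t-\rho X_H(u))$, split off $\int u^*\omega$, convert the $X_H$ term to $\rho(\tau)\frac{\partial}{\partial\tau}(H_t\circ u)$, and integrate by parts in $\tau$ using $\rho_+(-\infty)=0$, $\rho_+(+\infty)=1$. The additional remarks about exponential asymptotic convergence justifying the boundary evaluation are a reasonable supplement, though the paper simply takes them for granted.
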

\begin{proof} The proof will be carried out by an explicit calculation.
We compute
\beastar
&{}& E_{(J,H,\rho)} (u) = \int_{-\infty}^\infty \int_0^1 \left|\frac{\partial u}{\partial
\tau}\right|^2_J \, dt\, d\tau
= \int_{-\infty}^\infty \int_0^1
\omega\left( \frac{\partial u}{\partial \tau},J\frac{\partial
u}{\partial \tau}\right)\,dt\, d\tau \\
&=& \int_{-\infty}^\infty \int_0^1 \omega\left(\frac{\partial
u}{\partial \tau}, \frac{\partial u}{\partial t}
-  \rho(\tau)X_{H_t}(u)\right)
\,dt\, d\tau \\
&=& \int_{-\infty}^\infty \int_0^1 \omega\left( \frac{\partial
u}{\partial \tau}, \frac{\partial u}{\partial t}\right)\, dt\,d\tau
-
\int_{-\infty}^\infty  \rho(\tau) \int_0^1 \omega\left(
\frac{\partial u}{\partial \tau},X_{H_t(u)}\right) \, dt\, d\tau \\
&=& \int u^*\omega - \int_{-\infty}^\infty  \rho(\tau) \int_0^1
\left(-dH_t(u) \frac{\partial u}{\partial \tau}\right)\,dt\,d\tau \\
&=&  \int u^*\omega + \int_{-\infty}^\infty \rho(\tau) \int_0^1
\frac{\partial}{\partial \tau}(H_t \circ u)\,dt\,d\tau \\
&=&  \int u^*\omega + \int_0^1 H(t, u(\infty,t))\, dt
- \int_{-\infty}^\infty
\rho'(\tau) \int_0^1 (H_t \circ u)\,dt\,d\tau.
\eeastar
Here at the last equality, we do integration by parts
and use the fact $\rho(\infty) = 1, \, \rho(-\infty) = 0$.
This finishes the proof of \eqref{eq:EJrho}.
\end{proof}

This lemma gives rise to the following key formula of the action difference
$$
\CA_{H,\ell_a}(u(\infty)) - \CA_{\ell_a}(u(-\infty)).
$$
\begin{prop}\label{prop:energyid} Let $p \in L^{(0)} \cap L^{(1)}$ and $q' \in \phi_H^1(L^{(0)}) \cap L^{(1)}$.
Denote by $z_{q'}^H \in \Omega(L^{(0)},L^{(1)};\ell_a)$ the Hamiltonian trajectory defined by
$$
z_{q'}^H(t) = \phi_H^t(\phi_H^{-1}(q'))
$$
and consider $[\ell_p,w] \in \Crit \CA_{\ell_a}$,
$[z_{q'}^H,w'] \in \Crit \CA_{H,\ell_a}$.
Suppose that $u$ is any finite energy solution of (\ref{eq:CRJH}) with $\rho=\rho_+$ as in \eqref{eqn:rho}
satisfying the asymptotic condition
and homotopy condition
\bea\label{eq:asympt}
u(-\infty) = \ell_p, \quad u(\infty) = z^H_{q'}, \quad w \# u \sim w'.
\eea
Then we have
\be\label{eq:energyid}
\CA_{H,\ell_a}([z_{q'}^H,w']) - \CA_{\ell_a}([w,\ell_p])
= E_{(J,H,\rho)}(u) + \int_{-\infty}^\infty \int_0^1 \rho'(\tau) H(t, u(\tau,t))\, dt\, d\tau.
\ee
\end{prop}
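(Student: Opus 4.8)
The plan is to deduce the identity directly from the definitions of the two action functionals in \eqref{eq:geom-action} and \eqref{eq:dynam-action} together with the energy identity of Lemma~\ref{lem:energyestimate}, so that the proposition is little more than a bookkeeping reformulation of that lemma. First I would unwind the left-hand side:
\[
\CA_{\ell_a}([\ell_p,w]) = \int w^*\omega, \qquad
\CA_{H,\ell_a}([z_{q'}^H,w']) = \int (w')^*\omega + \int_0^1 H(t,z_{q'}^H(t))\,dt .
\]
The only non-formal ingredient at this stage is that the symplectic area descends to the Novikov covering and is additive under concatenation in the $\tau$-direction: because the equivalence relation \eqref{eq:novikov} forces $\omega([\overline{w_1}\#w_2])=0$, the number $\int w^*\omega$ depends only on the class $[\ell,w]$, and the concatenation $w\#u$ satisfies $\int(w\#u)^*\omega=\int w^*\omega+\int u^*\omega$. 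Applying this to the homotopy hypothesis $w\#u\sim w'$ from \eqref{eq:asympt} gives $\int(w')^*\omega = \int w^*\omega + \int u^*\omega$.

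Substituting this into $\CA_{H,\ell_a}([z_{q'}^H,w'])$ and subtracting $\CA_{\ell_a}([\ell_p,w])=\int w^*\omega$, the terms $\int w^*\omega$ cancel, and using the asymptotic condition $u(\infty)=z_{q'}^H$ to write $z_{q'}^H(t)=u(\infty,t)$, I obtain
\[
\CA_{H,\ell_a}([z_{q'}^H,w']) - \CA_{\ell_a}([\ell_p,w]) = \int u^*\omega + \int_0^1 H(t,u(\infty,t))\,dt .
\]
The final step is to invoke Lemma~\ref{lem:energyestimate} with $\rho=\rho_+$ as in \eqref{eqn:rho}, whose conclusion \eqref{eq:EJrho} says precisely that the right-hand side of the last display equals $E_{(J,H,\rho)}(u) + \int_{-\infty}^\infty \rho'(\tau)\int_0^1 (H_t\circ u)\,dt\,d\tau$; combining the two displays and moving $\rho'(\tau)$ inside the $t$-integral yields \eqref{eq:energyid}.

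I do not expect a genuine obstacle, since the proposition is essentially Lemma~\ref{lem:energyestimate} rewritten in action-functional language. The points that require care are: that $\ell_p$ is a constant path, so it contributes no area and the geometric action carries no Hamiltonian term, hence nothing extra enters on that side; that the area identities across the Novikov covering and the $\tau$-concatenation are applied with consistent orientations, so that $w\#u$ is exactly the capping of $z_{q'}^H$ induced from the capping $w$ of $\ell_p$; and the normalization $c(H;\ell_a)=0$ of Remark~\ref{rem:constant}, which is why no base-path constant surfaces in the final formula (without it one would simply carry $c(H;\ell_a)$ along). One should also double-check the sign conventions at $u(\pm\infty)$ so that, with $\rho=\rho_+$, the limit $\tau\to-\infty$ is the geometric generator $\ell_p$ solving the unperturbed equation and $\tau\to+\infty$ is the dynamical generator $z_{q'}^H$, consistent with \eqref{eq:CRJH}.
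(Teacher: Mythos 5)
Your proof is correct and takes essentially the same route as the paper: the paper likewise uses the homotopy condition to write $\int u^*\omega = \int (w')^*\omega - \int w^*\omega$, substitutes into \eqref{eq:EJrho}, and rearranges using the definitions \eqref{eq:geom-action} and \eqref{eq:dynam-action}; your version merely expands the two actions first and then invokes the lemma, which is the same bookkeeping in a different order.
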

\begin{proof} By \eqref{eq:asympt}, we obtain
$$
\int u^*\omega = \int (w')^*\omega - \int w^*\omega.
$$
By substituting this into \eqref{eq:EJrho} and rearranging the resulting
formula, we obtain \eqref{eq:energyid} from the definitions \eqref{eq:geom-action} of $\CA_{\ell_a}$
and \eqref{eq:dynam-action} of $\CA_{H,\ell_a}$.
\end{proof}

An immediate corollary is
\begin{cor}\label{cor:actionchange} Suppose that there is a
solution $u$ of \eqref{eq:CRJH}
for $\rho=\rho_+$ as in Proposition \ref{prop:energyid}.
Then we have
\be\label{eq:actionrho}
\CA_{H,\ell_a}([z^H_{q'}, w']) - \CA_{\ell_a} ([\ell_p,w])  \geq \int_0^1 \min H_t\, dt = - E^-(H).
\ee
Similarly if there is a solution $u$ of \eqref{eq:CRJH} for $\rho = \rho_- = 1-\rho_+$,
\be\label{eq:actiontilderho}
\CA_{\ell_a}([\ell_{q},w]) - \CA_{H,\ell_a}([z^H_{p'},w']) \geq \int_0^1- \max H_t\, dt = - E^+(H).
\ee
\end{cor}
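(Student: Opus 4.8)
The plan is to derive both inequalities directly from the action identity of Proposition~\ref{prop:energyid}, using nothing beyond the obvious fact that $E_{(J,H,\rho)}(u) \geq 0$, which is immediate from the definition \eqref{eqn:energy} of the energy as the integral of a sum of squared norms.

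First I would treat the case $\rho = \rho_+$. Proposition~\ref{prop:energyid} gives
\[
\CA_{H,\ell_a}([z_{q'}^H,w']) - \CA_{\ell_a}([\ell_p,w]) = E_{(J,H,\rho_+)}(u) + \int_{-\infty}^\infty\int_0^1 \rho_+'(\tau)\, H(t,u(\tau,t))\, dt\, d\tau .
\]
Since $\rho_+' \geq 0$ and $H(t,u(\tau,t)) \geq \min H_t$ pointwise, for each fixed $\tau$ we have $\int_0^1 \rho_+'(\tau) H(t,u(\tau,t))\, dt \geq \rho_+'(\tau)\int_0^1 \min H_t\, dt$; integrating in $\tau$, using $\int_{-\infty}^\infty \rho_+'(\tau)\, d\tau = \rho_+(+\infty) - \rho_+(-\infty) = 1$ together with $E_{(J,H,\rho_+)}(u) \geq 0$, yields
\[
\CA_{H,\ell_a}([z_{q'}^H,w']) - \CA_{\ell_a}([\ell_p,w]) \geq \int_0^1 \min H_t\, dt = -E^-(H),
\]
which is \eqref{eq:actionrho}.

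For \eqref{eq:actiontilderho} I would rerun the computations in the proofs of Lemma~\ref{lem:energyestimate} and Proposition~\ref{prop:energyid} verbatim, but with $\rho = \rho_- = 1 - \rho_+$ in place of $\rho_+$. The only changes are that $\rho_-(+\infty) = 0$ and $\rho_-(-\infty) = 1$, so the boundary term produced by the integration by parts in $\tau$ now sits at $\tau = -\infty$ rather than at $\tau = +\infty$; correspondingly the asymptotic data exchange roles, with $u(-\infty) = z^H_{p'}$ a Hamiltonian trajectory (a generator of the dynamical complex) and $u(+\infty) = \ell_q$ an intersection point in $L^{(0)} \cap L^{(1)}$, the cappings being related by $w' \# u \sim w$. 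This produces the analogous identity
\[
\CA_{\ell_a}([\ell_q,w]) - \CA_{H,\ell_a}([z^H_{p'},w']) = E_{(J,H,\rho_-)}(u) + \int_{-\infty}^\infty\int_0^1 \rho_-'(\tau)\, H(t,u(\tau,t))\, dt\, d\tau .
\]
Now $\rho_-' = -\rho_+' \leq 0$, so multiplying the pointwise bound $H(t,u(\tau,t)) \leq \max H_t$ by $\rho_-'(\tau) \leq 0$ reverses it: $\int_0^1 \rho_-'(\tau) H(t,u(\tau,t))\, dt \geq \rho_-'(\tau)\int_0^1 \max H_t\, dt$; since $\int_{-\infty}^\infty \rho_-'(\tau)\, d\tau = \rho_-(+\infty) - \rho_-(-\infty) = -1$ and $E_{(J,H,\rho_-)}(u) \geq 0$, we conclude
\[
\CA_{\ell_a}([\ell_q,w]) - \CA_{H,\ell_a}([z^H_{p'},w']) \geq -\int_0^1 \max H_t\, dt = \int_0^1 -\max H_t\, dt = -E^+(H),
\]
which is \eqref{eq:actiontilderho}.

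There is no real obstacle here: the corollary is a one-line consequence of the action identity once the energy is known to be nonnegative. The only point that deserves care is the sign bookkeeping in the $\rho_-$ case --- both that the asymptotic limits swap (so the dynamical generator $z^H_{p'}$ now sits at $\tau = -\infty$) and that weighting the pointwise bound $H_t(u) \leq \max H_t$ by the nonpositive $\rho_-'$ reverses the inequality. One should also keep track of the $\int_0^1 H\, dt$ term that drops out of the integration by parts so that it matches the definition \eqref{eq:dynam-action} of $\CA_{H,\ell_a}$, but this is precisely the computation already performed in the proof of Lemma~\ref{lem:energyestimate}.
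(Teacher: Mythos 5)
Your proof is correct and is essentially the argument the paper intends, since the paper presents this as an immediate corollary of Proposition~\ref{prop:energyid} together with $E_{(J,H,\rho)}(u)\ge 0$. The sign bookkeeping for the $\rho_-$ case (asymptotic limits swapping, $\rho_-'\le 0$ reversing the pointwise bound, and $\int\rho_-'\,d\tau=-1$) is exactly the content that the word ``similarly'' in the statement is gesturing at, and you have verified it carefully.
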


Next let us concatenate  the two equation \eqref{eq:CRJH} for $\rho_+$ as in \eqref{eq:rho} and $\rho_-=1-\rho_+$
by considering one-parameter family of elongation function of the type
\bea \label{rho_K}
\rho_K = \begin{cases}
\rho_+(\cdot +K)  & \mbox{for }\, \tau \leq -K+1\\
\rho_-(\cdot -K+1) & \mbox{for }\, \tau \geq K-1\\
1 & \mbox{for } \vert \tau \vert \leq K -1
\end{cases}
\eea
for $1 \leq K \leq \infty$ and further deforming $\rho_{K=1}$ further down to $\rho_{K=0}\equiv 0$.

\begin{prop}\label{prop:specialcase}
Let $u$ be a finite energy solution for \eqref{eq:CRJH} of the elongation
$\rho_K$ with asymptotic condition
$$
u(-\infty) = \ell_p, \, u(\infty) = \ell_q, \, w_-\# u \sim w_+.
$$
Then we have
\bea
\CA_{\ell_a}([\ell_q,w_+]) - \CA_{\ell_a}([\ell_p,w_-]) & \geq & -\left(E^-(H) + E^+(H)\right) = - \|H\| \label{eq:actionHofer}\\
E_{(J,H;\rho_K)}(u) & \leq & \int u^*\omega + \|H\|.
\eea
\end{prop}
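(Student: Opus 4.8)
The plan is to deduce both inequalities from a single energy identity for the elongation $\rho_K$, obtained by running \emph{verbatim} the computation in the proof of Lemma \ref{lem:energyestimate}, but now keeping track of the boundary term for a general $\rho$. Write $f(\tau) := \int_0^1 H_t(u(\tau,t))\, dt$. Stopping the chain of equalities in Lemma \ref{lem:energyestimate} one line before the last gives
\[
E_{(J,H,\rho_K)}(u) = \int u^*\omega + \int_{-\infty}^\infty \rho_K(\tau)\, f'(\tau)\, d\tau ,
\]
and then integrating by parts, using $\rho_K(\pm\infty)=0$ so that $\bigl[\rho_K f\bigr]_{-\infty}^{\infty}=0$, one obtains
\[
E_{(J,H,\rho_K)}(u) = \int u^*\omega - \int_{-\infty}^\infty \rho_K'(\tau)\, f(\tau)\, d\tau .
\]
Since $\rho_K'$ is supported in $[-K,-K+1]\cup[K-1,K]$, there is no convergence issue (for a finite energy solution $f$ is in any case bounded), so the entire proposition reduces to estimating this last integral.

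Next I would estimate $-\int_{-\infty}^\infty \rho_K'(\tau)f(\tau)\,d\tau$ by splitting it over the two pieces of $\operatorname{supp}\rho_K'$. On the \emph{rising} part $[-K,-K+1]$ we have $\rho_K'\geq 0$ and $\int_{-K}^{-K+1}\rho_K' = \rho_K(-K+1)-\rho_K(-K)=1$; since $f(\tau)\geq \int_0^1 \min H_t\,dt = -E^-(H)$, we get $-\rho_K'(\tau)f(\tau)\leq \rho_K'(\tau)E^-(H)$, so this piece contributes at most $E^-(H)$. On the \emph{falling} part $[K-1,K]$ we have $\rho_K'\leq 0$ and $\int_{K-1}^{K}\rho_K' = \rho_K(K)-\rho_K(K-1) = -1$; since $f(\tau)\leq \int_0^1 \max H_t\,dt = E^+(H)$ and $-\rho_K'(\tau)\geq 0$, we get $-\rho_K'(\tau)f(\tau)\leq -\rho_K'(\tau)E^+(H)$, so this piece contributes at most $-E^+(H)\int_{K-1}^{K}\rho_K' = E^+(H)$. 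Adding, $-\int_{-\infty}^\infty\rho_K' f\,d\tau \leq E^-(H)+E^+(H)=\|H\|$, and combining with the identity above gives the second estimate $E_{(J,H;\rho_K)}(u)\leq \int u^*\omega + \|H\|$.

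For the first estimate \eqref{eq:actionHofer} I would use $E_{(J,H;\rho_K)}(u)\geq 0$ together with the homotopy hypothesis $w_-\#u\sim w_+$, which by the definition of $\CA_{\ell_a}$ gives $\int u^*\omega = \int w_+^*\omega - \int w_-^*\omega = \CA_{\ell_a}([\ell_q,w_+]) - \CA_{\ell_a}([\ell_p,w_-])$. Rearranging the energy identity, $\int u^*\omega = E_{(J,H;\rho_K)}(u) + \int_{-\infty}^\infty \rho_K'(\tau)f(\tau)\,d\tau \geq 0 - \|H\|$, where the second term is bounded below by $-\|H\|$ by exactly the same two-piece computation. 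The degenerate range $0\leq K\leq 1$ (e.g. $\rho_K = K\rho_1$, see \eqref{rho_K}) is covered by the identical argument, since then $\int_{\text{rising}}\rho_K' = K\leq 1$ and the bounds only improve; the limiting case $K=\infty$ follows by passing to the limit.

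I do not expect a serious obstacle here: once Lemma \ref{lem:energyestimate} is available, this is essentially bookkeeping. The one point requiring care is the sign matching in the two-piece estimate — feeding in the lower bound $\min H_t$ exactly on the rising part (where $\rho_K'\geq 0$) and the upper bound $\max H_t$ on the falling part (where $\rho_K'\leq 0$) — since this is what makes the constant come out to be the full Hofer norm $\|H\| = E^-(H)+E^+(H)$, which is the optimal (Chekanov-type) bound. Conceptually this is the concatenation, announced just before the statement, of the $\rho_+$-estimate and the $\rho_-$-estimate of Corollary \ref{cor:actionchange}; carrying it out through the energy identity rather than by gluing avoids having to assume that the ``middle'' asymptotic limit of $u$ on the region $\{\rho_K=1\}$ is an honest Hamiltonian chord.
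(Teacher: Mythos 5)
Your proposal is correct, and since the paper gives no written proof of Proposition \ref{prop:specialcase} (it simply states it after the remark about concatenating the $\rho_+$ and $\rho_-$ equations), your argument fills in exactly the details the authors left implicit. The route you take is the natural one set up by Lemma \ref{lem:energyestimate}: redo the integration by parts for $\rho_K$, noting that the vanishing of $\rho_K$ at both ends kills the boundary term, so that
\[
E_{(J,H,\rho_K)}(u) = \int u^*\omega - \int_{-\infty}^\infty \rho_K'(\tau)\, f(\tau)\, d\tau ,\qquad f(\tau):=\int_0^1 H_t(u(\tau,t))\,dt,
\]
and then split $\operatorname{supp}\rho_K'$ into the rising and falling pieces, feeding the lower bound $f\geq -E^-(H)$ into the rising part (where $\rho_K'\geq 0$ integrates to at most $1$) and the upper bound $f\leq E^+(H)$ into the falling part (where $\rho_K'\leq 0$ integrates to at least $-1$). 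The sign-matching you flag is indeed the only place the argument could go wrong, and you have it right; the two pieces sum to the optimal constant $\|H\|$. Your final remark is also well taken: the paper's phrase "concatenate the two equations" is a conceptual pointer to Corollary \ref{cor:actionchange} rather than a literal gluing statement, and running the energy identity in one stroke for $\rho_K$ is cleaner because a $\rho_K$-solution need not restrict, on the middle region $\{\rho_K\equiv 1\}$, to anything converging to a Hamiltonian chord. The treatment of the degenerate range $0\leq K\leq 1$ via $\rho_K=K\rho_1$ is also handled correctly, since the total variation of $\rho_K$ on each piece only decreases, so the bound is a fortiori.
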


\medskip

So far in this section, we have moved the first argument $L^{(0)}$ in the pair $(L^{(0)},L^{(1)})$.
When we move the second argument $L^{(1)}$ instead,
the only difference occurring in the above discussion will be the interchange
$$
-\min H \longleftrightarrow \max H.
$$
Now we move $L^{(0)}$ and $L^{(1)}$ by Hamiltonian isotopies
$\phi_{H^{(0)}}^t$ and $\phi_{H^{(1)}}^t$, respectively.
$$
(L^{(0)},L^{(1)}) \mapsto (L^{(0) \prime}=\phi^1_{H^{(0)}}(L^{(0)}), L^{(1) \prime}=\phi^1_{H^{(1)}}(L^{(1)})).
$$
Then we have the following bijection

$$
{\mathfrak g}_{H^{(0)},H^{(1)}}^+: (\ell'',w'') \in \widetilde{\Omega}(L^{(0) \prime},L^{(1) \prime})
\mapsto (\ell, w) \in  \widetilde{\Omega}(L^{(0)},L^{(1)}),
$$
where
$$
\ell(t)= \phi_{H^{(1)}}^{1-t}\circ (\phi_{H^{(1)}}^1)^{-1} \circ
\phi_{H^{(0)}}^t \circ (\phi_{H^{(0)}}^1)^{-1}(\ell''(t))
$$
and
$$
w(s,t)=\phi_{H^{(1)}}^{1-t}\circ (\phi_{H^{(1)}}^1)^{-1} \circ
\phi_{H^{(0)}}^t \circ (\phi_{H^{(0)}}^1)^{-1}(w''(s,t)).
$$
We write ${\mathfrak g}_{H^{(0)},H^{(1)}}^-=({\mathfrak g}_{H^{(0)},H^{(1)}}^+)^{-1}$.
By an abuse of notation, we also denote by ${\mathfrak g}_{H^{(0)},H^{(1)}}^{\pm}$
the bijection between the path spaces
${\Omega}(L^{(0)},L^{(1)})$ and ${\Omega}(L^{(0) \prime},L^{(1) \prime})$.
Then we obtain the following improved energy estimate.
Here we take the base path $\ell_a$ in such a way that
\be\label{eq:cHhat}
c(\widehat{H};\ell_a)=\int_0^1 \widehat H(t,\ell_a(t)) dt = 0
\ee
as in Remark \ref{rem:constant}.
Here
$\widehat{H}$ is the normalized Hamiltonian
generating
$$
 \phi_{H^{(1)}}^{1-t} \circ  (\phi_{H^{(1)}}^1)^{-1}  \circ \phi_{H^{(0)}}^t.
$$
The Hamiltonian $\widehat{H}$ is explitly written as
\be \label{hatH}
\widehat{H}(t,x)=-H^{(1)}(1-t,x)
+H^{(0)}(t,(\phi_{H^{(1)}}^{1-t} \circ  (\phi_{H^{(1)}}^1)^{-1})^{-1}(x)).
\ee
For $v:[0,1] \times [0,1] \to X$, we put
$$(\Phi_{H^{(0)},H^{(1)}} v) (s,t)=  \phi_{H^{(0)}}^1 \circ (\phi_{H^{(0)}}^t)^{-1}
\circ \phi_{H^{(1)}}^1 \circ (\phi_{H^{(1)}}^{1-t})^{-1}v(s,t).$$
By the expression \eqref{hatH} of $\widehat{H}$, we find that
\be
E^-(\widehat{H}) \leq E^-(H^{(0)})+E^+(H^{(1)}), \quad
E^+(\widehat{H}) \leq E^+(H^{(0)})+E^-(H^{(1)}).
\ee
Recall that we have chosen $\ell_a$ such that \eqref{eq:cHhat}
is satisfied and put
$\ell_a'' = {\mathfrak g}_{H^{(0)},H^{(1)}}^-(\ell_a).$

\begin{prop}\label{prop:improvedestimate} Let $(L^{(0)},L^{(1)})$ be a pair of compact
Lagrangian submanifolds and $(L^{(0)\prime},L^{(1)\prime})$
another pair with
$$
L^{(0)\prime} = \phi_{H^{(0)}}^1(L^{(0)}), \quad L^{(1)\prime} =  \phi_{H^{(1)}}^1(L^{(1)})
$$
and let $H^{(0)}$, $H^{(1)}$ be the normalized Hamiltonians generating
$\phi^1_{H^{(0)}}$ and $\phi^1_{H^{(1)}}$ respectively.
Consider a pair $[\ell_p,w] \in \widetilde
\Omega(L^{(0)},L^{(1)};\ell_a)$ and $[\ell_{q''},w'']
\in \widetilde\Omega(L^{(0)\prime},L^{(1)\prime};\ell_a^{\prime\prime})$ for which there exists a solution $u$ of
\eqref{eq:CRJH} with $\rho=\rho_+$ as in \eqref{eqn:rho} such that
$$
\lim_{\tau \to -\infty} u(\tau, \cdot )=\ell_p,  \  \lim_{\tau \to +\infty} u(\tau,\cdot )=
{\mathfrak g}_{H^{(0)},H^{(1)}}^{+} (\ell_{q''}), \
\Phi_{H^{(0)},H^{(1)}} (w \#u) \sim w''.
$$
Then we have
\be\label{eq:improvedestimate}
\mathcal A_{\ell_a''}([\ell_{q''},w'']) -
\mathcal A_{\ell_a}([\ell_p,w]) \geq -(E^-(H^{(0)}) + E^+(H^{(1)})).
\ee
Similarly, let
$[\ell_{p''},w''] \in \widetilde\Omega(L^{(0)\prime},L^{(1)\prime};\ell_a^{\prime\prime})$
and $[\ell_q,w] \in \widetilde \Omega(L^{(0)},L^{(1)};\ell_a)$.
If there exists a solution $u$ of \eqref{eq:CRJH} with $\rho=\rho_-=1- \rho_+$
such that
$$
\lim_{\tau \to -\infty} u(\tau, \cdot )=
{\mathfrak g}_{H^{(0)},H^{(1)}}^{+} (\ell_{p''}),
\  \lim_{\tau \to +\infty} u(\tau,\cdot )=\ell_{q}, \
\Phi_{H^{(0)},H^{(1)}} ^{-1}(w'') \#u \sim w,
$$
we have
\be\label{eq:improvedestimate'}
\mathcal A_{\ell_a}([\ell_{q}, w]) - \mathcal A_{\ell_a''}([\ell_{p''},w'']) \geq -(E^+(H^{(0)}) + E^- (H^{(1)})).
\ee
\end{prop}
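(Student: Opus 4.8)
The plan is to reduce this two-sided deformation to the one-sided estimate of Corollary~\ref{cor:actionchange}. The key observation is that the Hamiltonian path $t\mapsto \phi_{H^{(1)}}^{1-t}\circ(\phi_{H^{(1)}}^1)^{-1}\circ\phi_{H^{(0)}}^t$ generated by $\widehat{H}$ begins at the identity and ends at $\phi_{\widehat{H}}^1=(\phi_{H^{(1)}}^1)^{-1}\circ\phi_{H^{(0)}}^1$, so that
$$
\bigl(\phi_{\widehat{H}}^1(L^{(0)}),\,L^{(1)}\bigr)=(\phi_{H^{(1)}}^1)^{-1}\bigl(L^{(0)\prime},\,L^{(1)\prime}\bigr).
$$
Hence, up to the global symplectomorphism $\phi_{H^{(1)}}^1$, moving the pair $(L^{(0)},L^{(1)})$ to $(L^{(0)\prime},L^{(1)\prime})$ is the same as moving only the first factor to $\phi_{\widehat{H}}^1(L^{(0)})$, and the equation \eqref{eq:CRJH} in the statement is exactly the perturbed Cauchy--Riemann equation for the fixed pair $(L^{(0)},L^{(1)})$ with Hamiltonian $\widehat{H}$. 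I would first check, directly from the explicit formulas, that $\mathfrak{g}_{H^{(0)},H^{(1)}}^+$ factors as $\mathfrak{g}_{\widehat{H};0}^+\circ\Psi$, where $\Psi$ is the tautological push-forward by $(\phi_{H^{(1)}}^1)^{-1}$ from $\widetilde\Omega(L^{(0)\prime},L^{(1)\prime};\ell_a'')$ to $\widetilde\Omega(\phi_{\widehat{H}}^1(L^{(0)}),L^{(1)};\ell_a')$ with $\ell_a':=\Psi(\ell_a'')$, and that $\Phi_{H^{(0)},H^{(1)}}$ is precisely the corresponding push-forward on capping maps. Consequently the homotopy constraint $\Phi_{H^{(0)},H^{(1)}}(w\#u)\sim w''$ is the $\Psi$-image of the constraint ``$w\#u$ represents the capping $w'$'' attached to the generator $q'=(\phi_{H^{(1)}}^1)^{-1}(q'')$ of the geometric Floer complex for $(\phi_{\widehat{H}}^1(L^{(0)}),L^{(1)})$, and $z^{\widehat{H}}_{q'}=\mathfrak{g}_{H^{(0)},H^{(1)}}^+(\ell_{q''})$.

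With this identification in hand, I would apply \eqref{eq:actionrho} of Corollary~\ref{cor:actionchange} with $H$ replaced by $\widehat{H}$: for the given solution $u$, which satisfies $u(-\infty)=\ell_p$ and $u(+\infty)=z^{\widehat{H}}_{q'}$, this gives
$$
\mathcal{A}_{\widehat{H},\ell_a}([z^{\widehat{H}}_{q'},w'])-\mathcal{A}_{\ell_a}([\ell_p,w])\ \geq\ -E^-(\widehat{H}).
$$
Now I would translate the left-hand side: Lemma~\ref{lem:gH0+} applied to $\widehat{H}$, together with $c(\widehat{H};\ell_a)=0$ from the normalization \eqref{eq:cHhat}, identifies the first term with $\mathcal{A}_{\ell_a'}([\ell_{q'},w'])$; and since $(\phi_{H^{(1)}}^1)^{-1}$ is a symplectomorphism, the push-forward $\Psi$ preserves the geometric action $[\ell',w']\mapsto\int(w')^*\omega$ verbatim (there is no Hamiltonian term, hence no constant shift), so $\mathcal{A}_{\ell_a'}([\ell_{q'},w'])=\mathcal{A}_{\ell_a''}([\ell_{q''},w''])$. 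Combining with the already-noted inequality $E^-(\widehat{H})\leq E^-(H^{(0)})+E^+(H^{(1)})$ yields \eqref{eq:improvedestimate}. The second assertion \eqref{eq:improvedestimate'} follows by running the same argument with $\rho=\rho_-$ in place of $\rho_+$, invoking \eqref{eq:actiontilderho} and the companion bound $E^+(\widehat{H})\leq E^+(H^{(0)})+E^-(H^{(1)})$.

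I expect the main obstacle to lie entirely in the bookkeeping of the first paragraph: one must verify that $\mathfrak{g}_{H^{(0)},H^{(1)}}^\pm$, the auxiliary map $\Phi_{H^{(0)},H^{(1)}}$, the base paths $\ell_a'$ and $\ell_a''$, and the cappings $w'$ and $w''$ are all mutually consistent under the factorization through $\mathfrak{g}_{\widehat{H};0}^\pm$ and $(\phi_{H^{(1)}}^1)^{-1}$, keeping careful track of which of the two transformations shifts the action functional --- the coordinate change $\mathfrak{g}_{\widehat{H};0}^\pm$ does, by Lemma~\ref{lem:gH0+}, but its constant is arranged to vanish, whereas the symplectomorphism push-forward does not shift it at all. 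One also needs the routine point that $\widehat{H}$ defined by \eqref{hatH} is automatically normalized when $H^{(0)}$ and $H^{(1)}$ are (composition and time-reversal of normalized Hamiltonians remain normalized, since the conjugating maps are symplectomorphisms), so that $E^{\pm}(\widehat{H})$ are indeed the quantities entering Corollary~\ref{cor:actionchange}. Once these compatibilities are established, the estimate is simply the one-sided estimate applied verbatim, followed by the elementary bounds on $E^\pm(\widehat{H})$.
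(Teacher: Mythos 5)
Your proposal is correct and reconstructs exactly the argument the paper intends (the paper omits a written proof of Proposition \ref{prop:improvedestimate}, having already set up all the ingredients — the Hamiltonian $\widehat{H}$ of \eqref{hatH}, the bounds on $E^\pm(\widehat{H})$, and the normalization $c(\widehat{H};\ell_a)=0$). The factorization $\mathfrak g_{H^{(0)},H^{(1)}}^+ = \mathfrak g_{\widehat H;0}^+ \circ \Psi$ that you verify, combined with Lemma \ref{lem:gH0+} and Corollary \ref{cor:actionchange} applied to $\widehat H$, is precisely the intended route.
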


The following proposition is parallel to Proposition \ref {prop:specialcase}

\begin{prop}\label{prop:finalestimate}
Let $(L^{(0)},L^{(1)})$ be a pair of compact Lagrangian submanifolds.  If there exists a solution
$u$ of \eqref{eq:CRJH} with $\rho=\rho_K$ in \eqref{rho_K} satisfying
$$
\lim_{\tau \to -\infty}u(\tau,\cdot)=\ell_p, \ \lim_{\tau \to +\infty}u(\tau,\cdot)=\ell_q, \
w_- \# u \sim w_+.
$$
Then we have
\be \label{action bound}
\CA_{\ell_a}([\ell_q,w_+]) - \CA_{\ell_a}([\ell_p,w_-])  \geq -\left( \| H^{(0)}\| + \| H^{(1)} \|\right)
\ee
and
\be \label{energy bound}
E_{(J,\widehat{H},\rho_K)}(u) \leq
\int u^{\ast} \omega +
\|H^{(0)}\| + \| H^{(1)}\|.
\ee
\end{prop}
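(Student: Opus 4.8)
The plan is to observe that, under the hypothesis of Proposition \ref{prop:finalestimate}, the map $u$ is by definition a finite energy solution of \eqref{eq:CRJH} for the single normalized Hamiltonian $\widehat{H}$ and the elongation function $\rho_K$, with asymptotics $u(-\infty)=\ell_p$, $u(+\infty)=\ell_q$ in $L^{(0)}\cap L^{(1)}$ and homotopy condition $w_-\#u\sim w_+$. This is \emph{exactly} the situation treated in Proposition \ref{prop:specialcase}, applied to the pair $(L^{(0)},L^{(1)})$ with $H=\widehat{H}$. Hence Proposition \ref{prop:specialcase} already yields
$$
\CA_{\ell_a}([\ell_q,w_+]) - \CA_{\ell_a}([\ell_p,w_-]) \ \geq\ -\|\widehat{H}\|, \qquad
E_{(J,\widehat{H},\rho_K)}(u) \ \leq\ \int u^*\omega + \|\widehat{H}\|.
$$
It then remains only to bound $\|\widehat{H}\|$. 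Using the two inequalities $E^-(\widehat{H}) \leq E^-(H^{(0)}) + E^+(H^{(1)})$ and $E^+(\widehat{H}) \leq E^+(H^{(0)}) + E^-(H^{(1)})$ recorded just before Proposition \ref{prop:improvedestimate} and adding them, one gets $\|\widehat{H}\| = E^-(\widehat{H}) + E^+(\widehat{H}) \leq \|H^{(0)}\| + \|H^{(1)}\|$, which converts the two displayed estimates into \eqref{action bound} and \eqref{energy bound} respectively.

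For transparency I would also recall the computation behind Proposition \ref{prop:specialcase}, since that is the mechanism being used. Running the calculation in the proof of Lemma \ref{lem:energyestimate} verbatim, but with $\rho=\rho_K$ and $H=\widehat{H}$, and performing the final integration by parts, the boundary contribution now \emph{vanishes} because $\rho_K(-\infty)=\rho_K(+\infty)=0$; one is left with
$$
E_{(J,\widehat{H},\rho_K)}(u) = \int u^*\omega - \int_{-\infty}^{\infty}\rho_K'(\tau)\int_0^1 (\widehat{H}_t\circ u)(\tau,t)\,dt\,d\tau .
$$
By \eqref{rho_K} one has $\rho_K'\geq 0$ on $[-K,-K+1]$ with $\int_{-K}^{-K+1}\rho_K'\,d\tau = 1$, and $\rho_K'\leq 0$ on $[K-1,K]$ with $\int_{K-1}^{K}\rho_K'\,d\tau = -1$, so the double integral on the right is bounded below by $-E^-(\widehat{H}) - E^+(\widehat{H}) = -\|\widehat{H}\|$. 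Combined with $E_{(J,\widehat{H},\rho_K)}(u)\geq 0$ this gives both $E_{(J,\widehat{H},\rho_K)}(u)\leq \int u^*\omega + \|\widehat{H}\|$ and $\int u^*\omega \geq -\|\widehat{H}\|$; the action inequality then follows because $\CA_{\ell_a}([\ell,w]) = \int w^*\omega$ and $w_-\#u\sim w_+$ forces $\CA_{\ell_a}([\ell_q,w_+]) - \CA_{\ell_a}([\ell_p,w_-]) = \int u^*\omega$ (Novikov equivalence preserves the symplectic area).

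I do not anticipate a serious difficulty; the content is bookkeeping, and the only points requiring attention are minor. First, in contrast to Lemma \ref{lem:energyestimate} and Corollary \ref{cor:actionchange}, the elongation $\rho_K$ vanishes at both ends, so there is no residual term $\int_0^1 \widehat{H}(t,u(\pm\infty,t))\,dt$ and the estimate is governed purely by the two transition intervals $[-K,-K+1]$ and $[K-1,K]$; the $\rho_{K=0}\equiv 0$ end of the family is the trivial case where $u$ is $J$-holomorphic with fixed boundary and the $\|\widehat{H}\|$ term is harmless. Second, one should confirm that Proposition \ref{prop:specialcase} is being applied with the correctly normalized Hamiltonian: $\widehat{H}$ is normalized by construction, and we have chosen the base path $\ell_a$ so that $c(\widehat{H};\ell_a)=0$ as in \eqref{eq:cHhat} and Remark \ref{rem:constant}, so no extra constant intrudes into the action comparison. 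Alternatively, the same conclusion can be reached by concatenating the two one-sided estimates of Proposition \ref{prop:improvedestimate} (for $\rho_+$ and for $\rho_-$) across an intermediate level set of $\rho_K$, which is the structural reason the statement is ``parallel to'' Proposition \ref{prop:specialcase}.
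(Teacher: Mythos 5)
Your argument is correct and matches the paper's (implicit) approach: the paper leaves Proposition \ref{prop:finalestimate} unproved, asserting only that it is ``parallel to Proposition \ref{prop:specialcase}'', and your proof carries that out exactly — run the Lemma \ref{lem:energyestimate} computation with $\rho=\rho_K$ (so the boundary term vanishes since $\rho_K(\pm\infty)=0$), specialize Proposition \ref{prop:specialcase} to the single Hamiltonian $H=\widehat{H}$, and then substitute the bounds $E^-(\widehat H)\le E^-(H^{(0)})+E^+(H^{(1)})$, $E^+(\widehat H)\le E^+(H^{(0)})+E^-(H^{(1)})$ to replace $\|\widehat H\|$ by $\|H^{(0)}\|+\|H^{(1)}\|$. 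The only small imprecision is that $\int_{-K}^{-K+1}\rho_K'\,d\tau=1$ holds only for $K\ge 1$ (for $0<K<1$ this integral is $<1$), but that only strengthens the estimate and does not affect the conclusion.
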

\section{Corrected proofs of Theorem J
and Theorem 6.1.25 \cite{fooo:book}}
\label{sec:correction}

To keep the statement of Theorem J \cite{fooo:book} as it is, we need to modify
construction of the chain map used in the proof of Theorem 6.1.25 \cite{fooo:book}.

In the rest of the paper, we assume that
a Lagrangian submanifold is closed and relatively spin and a pair of Lagrangian submanifolds
is relatively spin (Definition 1.6 \cite{fooo:book})
unless otherwise noted. In this discussion we use the $\C$-coefficients
as in \cite{fooo:toric1,fooo:toric2} but
one can also use the $\Q$-coefficients as in \cite{fooo:book}.

We first recall the definition of the universal Novikov ring $\Lambda_{\text{\rm nov}}$ used in \cite{fooo:book}.
An element of $\Lambda_{\text{\rm nov}}$ is a formal sum
$$
\sum_{i=1}^{\infty} a_i T^{\lambda_i}e^{\mu_i}
$$
with $a_i \in \C$, $\lambda_i \in \R$, $\mu_i \in \Z$ such that
$\lambda_i \leq \lambda_{i+1}$ and $\lim_{i\to \infty} \lambda_i = \infty$,
unless it is a finite sum. Here $T$ and $e$ are formal parameters. We define
a valuation $\frak v_T: \Lambda_{\text{\rm nov}} \to \R$  by
$$
\frak v_T \left(\sum_{i=1}^\infty a_iT^{\lambda_i}e^{\mu_i}\right) = \lambda_1.
$$
This induces a natural $\R$-filtration on $\Lambda_{\text{\rm nov}}$
which in turn induces a non-Archimedean topology thereon.
Then we define
$\Lambda_{0, \text{\rm nov}}$ to be the subring of $\Lambda_{\text{\rm nov}}$ consisting of
$\sum a_i T^{\lambda_i}e^{\mu_i}$ with
$\frak v_T \left(\sum_{i=1}^\infty a_iT^{\lambda_i}e^{\mu_i}\right) \geq 0
$
and $\Lambda_{0,\text{\rm nov}}^+$ by the subring  with $\frak v_T> 0$.

We define $C(L^{(1)},L^{(0)};\Lambda_{\text{nov}})$ as the $\Lambda_{\text{nov}}$-module
generated by $\text{Crit }{\mathcal A}_{\ell_a}$, $a \in \pi_0(\Omega(L^{(0)},L^{(1)}))$
modulo the equivalence relation $\sim$ given in \eqref{eq:novikov}.
The filtration $\{F^{\lambda}\}$
on $C(L^{(1)},L^{(0)};\Lambda_{\text{nov}})$ is given by the action functional ${\mathcal A}_{\ell_a}$. See p.127 in \cite{fooo:book}.
We can regard $C(L^{(1)},L^{(0)};\Lambda_{\text{nov}})$ as a free
$\Lambda_{\text{nov}}$-module generated by $L^{(0)} \cap L^{(1)}$ provided
$L^{(0)}$ and $L^{(1)}$ intersect transversally.
In such a situation, we  can identify $F^{0}C(L^{(1)},L^{(0)};\Lambda_{\text{nov}})$ and
the free $\Lambda_{0,\text{nov}}$-module generated by $L^{(0)} \cap L^{(1)}$.
We defined a filtered $A_{\infty}$-bimodule structure on $C(L^{(1)},L^{(0)};\Lambda_{\text{nov}})$
in Theorem  3.7.21  in \cite{fooo:book} (see also Definition 3.7.41 in \cite{fooo:book}).
By extending the coefficient ring to $\Lambda_{\text{nov}}$, we also have
a filtered $A_{\infty}$-bimodule structure on  $C(L^{(1)},L^{(0)};\Lambda_{\text{nov}})$.
This construction does not rely on the choice of
the base paths $\ell_a$.
However, when we construct a filtered $A_{\infty}$-bimodule homomorphism
$C(L^{(1)},L^{(0)};\Lambda_{\text{nov}}) \to C(L^{(1) \prime},L^{(0) \prime};\Lambda_{\text{nov}})$,
we use the base baths $\ell_a$ and $\ell_a'$.
As we will see, the improved estimate in Section \ref{sec:improved} is used to
control the filtration change under the filtered $A_{\infty}$-bimodule homomorphism.

\subsection{Statement of Theorem J \cite{fooo:book}}
\label{subsec:theoremJ}

In \cite{fooo:book},
we associate a set $\MM_{\text{weak,def}}(L)$ for
each relatively spin Lagrangian submanifold $L$ of $(X,\omega)$ and the maps
$$
\pi_{\text{amb}} : \MM_{\text{weak,def}}(L)
\to
H^2(X;\Lambda_{0, \text{nov}}^+), \quad
\frak{PO} : \MM_{\text{weak,def}}(L)
\to \Lambda_{0, \text{nov}}^+
$$
such that
the Floer cohomology
$HF((L,\text{\bf b}_1),(L,\text{\bf b}_0);\Lambda_{0,\text{\rm nov}})$
can be defined whenever the following condition
holds:
$\MM_{\text{weak,def}}(L)\ne \emptyset$
and
$$
\pi_{\text{amb}}(\text{\bf b}_1) =
\pi_{\text{amb}}(\text{\bf b}_0),
\quad
\frak{PO} (\text{\bf b}_1) =
\frak{PO} (\text{\bf b}_0).
$$
See Theorem B \cite{fooo:book}.
When this condition is satisfied, we say $L$ is {\it weakly unobstructed after bulk deformation}.
We set
$$
\MM_{\text{weak}}(L)=\pi_{\text{amb}}^{-1}(0),
\quad
\MM (L) =\pi_{\text{amb}}^{-1}(0) \cap
\frak{PO}^{-1}(0),
$$
whose elements are called {\it weak bounding cochain} (weak Maurer-Cartan element),
{\it bounding cochain} (Maurer-Cartan element),
respectively.
See Section 3.6, especially Definition 3.6.4 and Definition
3.6.29 \cite{fooo:book} for the precise definitions of
bounding cochain and weak bounding cochain.
More generally, for a relative spin pair
$(L^{(1)},L^{(0)})$ of
Lagrangian submanifolds and
\bea\label{eq:MMfiber}
(\text{\bf b}_1, \text{\bf b}_0)
& \in
\{ (\text{\bf b}_1, \text{\bf b}_0) ~\vert~
\pi_{\text{amb}}(\text{\bf b}_1) =
\pi_{\text{amb}}(\text{\bf b}_0), ~
\frak{PO} (\text{\bf b}_1) =
\frak{PO} (\text{\bf b}_0)
\} \nonumber\\
& =: \MM_{\text{weak,def}}(L^{(1)})
\times_{(\pi_{\text{amb}},\frak{PO})}
\MM_{\text{weak,def}}(L^{(0)}),
\eea
we can define
the Floer cohomology $HF((L^{(1)},\text{\bf b}_1),(L^{(0)},\text{\bf b}_0);\Lambda_{0,\text{\rm nov}})$
over $\Lambda_{0,\text{\rm nov}}$.
By Theorem 6.1.20 \cite{fooo:book}, it is
isomorphic to
\be\label{eq:HF0nov} \Lambda_{0,\text{\rm nov}}^{\oplus a} \oplus
\bigoplus_{i=1}^b (\Lambda_{0,\text{\rm nov}}/T^{\lambda_i}\Lambda_{0,\text{\rm nov}})
\ee
for some non negative integer $a$ and positive real numbers $\lambda_i$ ($i=1,\dots,b$).
We call $a$ the {\it Betti number} and
$\lambda_i$ the {\it torsion exponent} of
the Floer cohomology.
We note that $HF((L^{(1)},\text{\bf b}_1),(L^{(0)},\text{\bf b}_0);\Lambda_{0,\text{\rm nov}})$
is not invariant under the Hamiltonian isotopy.
However, it is proved in \cite{fooo:book} (see Theorem G (G.4)) that the Floer cohomology
$$
HF((L^{(1)},\text{\bf b}_1),(L^{(0)},\text{\bf b}_0);\Lambda_{\text{\rm nov}})
$$
with
$\Lambda_{\text{\rm nov}}$ its coefficients is invariant
under the Hamiltonian isotopy and satisfies
\be\label{eq:HFLambda}
HF((L^{(1)},\text{\bf b}_1),(L^{(0)},\text{\bf b}_0);\Lambda_{\text{\rm nov}}) \cong
\Lambda_{\text{\rm nov}}^{\oplus a}.
\ee
In particular, when $a \neq 0$,
$L^{(0)}, \, L^{(1)}$ can not be displaced from each other. On the
other hand, when $a = 0$, there is no obvious obstruction to the
displacement. In this case, the torsion part of
$HF((L^{(1)},\text{\bf b}_1),(L^{(0)},\text{\bf b}_0);\Lambda_{0,\text{\rm nov}})$ provides
some information on the Hofer distance and
the displacement energy of the two.

Now, under the above brief review of
Lagrangian Floer theory for a weakly unobstructed
Lagrangian submanifold after bulk deformation,
we can state Theorem J of \cite{fooo:book}.

\medskip

\begin{thm}[Theorem J \cite{fooo:book}]\label{thm:correctedthmJ}
Let $(L^{(0)},L^{(1)})$ be a relatively spin pair of Lagrangian submanifolds
of $X$ and
$L^{(1)},L^{(0)}$ weakly unobstructed after bulk deformations. Let
$(\text{\bf b}_1,\text{\bf b}_0) \in \mathcal
M_{\text{\rm weak,def}}(L^{(1)}) \times_{\pi_{\text{\rm
amb}},\frak{PO}} \mathcal M_{\text{\rm weak,def}}(L^{(0)})$ as in \eqref{eq:MMfiber} and $\psi
: X \to X$ a Hamiltonian diffeomorphism.
Assume that $\psi(L^{(1)})$ is transversal to $L^{(0)}$ and denote
$$
b(\Vert\psi\Vert) = \#\{ i \mid\lambda_i \ge \Vert\psi\Vert\},
$$
where $\lambda_i$ are the torsion exponents as in \eqref{eq:HF0nov} and
$\Vert\phi\Vert$ is the Hofer norm defined by \eqref{def:Hofernorm}.
Then we have
\be\label{eq:mainformula} \# (\psi(L^{(1)})\cap L^{(0)}) \ge a + 2b(\Vert\psi\Vert).
\ee
\end{thm}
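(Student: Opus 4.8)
The plan is to compare the Floer chain complex $C := CF((L^{(1)},\text{\bf b}_1),(L^{(0)},\text{\bf b}_0);\Lambda_{\text{nov}})$ with the Floer chain complex $C' := CF((\psi(L^{(1)}),\psi_*\text{\bf b}_1),(L^{(0)},\text{\bf b}_0);\Lambda_{\text{nov}})$ of the Hamiltonian-deformed pair. Since $\psi$ acts trivially on $H^2(X)$ and $\frak{PO}$ is a Hamiltonian invariant, $(\psi_*\text{\bf b}_1,\text{\bf b}_0)$ still satisfies the pairing condition \eqref{eq:MMfiber}, so $C'$ is defined; and as $\psi(L^{(1)})$ is transversal to $L^{(0)}$, the module $F^0C'$ is free over $\Lambda_{0,\text{nov}}$ of rank $N := \#(\psi(L^{(1)})\cap L^{(0)})$. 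By the Hamiltonian invariance \eqref{eq:HFLambda}, $H(C'\otimes\Lambda_{\text{nov}})\cong\Lambda_{\text{nov}}^{\oplus a}$ with the \emph{same} $a$; writing $H(F^0C')\cong\Lambda_{0,\text{nov}}^{\oplus a}\oplus\bigoplus_{j=1}^{b'}\Lambda_{0,\text{nov}}/T^{\mu_j}$ as in \eqref{eq:HF0nov} and putting the boundary operator of the finitely generated free filtered complex $F^0C'$ into normal form over the valuation ring $\Lambda_{0,\text{nov}}$ — free cells $\Lambda_{0,\text{nov}}$, torsion cells $\Lambda_{0,\text{nov}}\xrightarrow{T^{\mu_j}}\Lambda_{0,\text{nov}}$, acyclic cells $\Lambda_{0,\text{nov}}\xrightarrow{1}\Lambda_{0,\text{nov}}$ — one reads off $N\ge a+2b'$. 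So it suffices to show $b'\ge b(\|\psi\|)$, i.e. that at least $b(\|\psi\|)$ torsion summands survive in $H(C')$.

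Next I would build the comparison maps. Fix $\e>0$ and a normalized Hamiltonian $H$ with $\phi_H^1=\psi$ and $\|H\|<\|\psi\|+\e$. Taking $H^{(1)}\equiv0$ and $H^{(0)}=H$ in Section \ref{sec:comparison}–Section \ref{sec:improved}, I use the coordinate change $\mathfrak{g}_{H;0}^{\pm}$ (Lemma \ref{lem:gH0+}) together with the \emph{perturbed} Cauchy–Riemann equation \eqref{eq:CRJH} with fixed Lagrangian boundary — \emph{not} the moving boundary problem \eqref{eq:moving} — to construct a filtered $A_\infty$-bimodule homomorphism, hence after inserting the weak bounding cochains an honest chain map, $\mathfrak{P}:C\to C'$ using the elongation $\rho=\rho_+$, and $\mathfrak{Q}:C'\to C$ using $\rho=\rho_-=1-\rho_+$. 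The pairing condition \eqref{eq:MMfiber} is exactly what makes the bulk- and bounding-cochain-deformed structures and these maps well defined and compatible. By Corollary \ref{cor:actionchange} and Proposition \ref{prop:improvedestimate}, after twisting by the appropriate powers of $T$ the maps $\mathfrak{P},\mathfrak{Q}$ are filtration preserving, and the usual gluing argument along the one-parameter family $\rho_K$ of \eqref{rho_K} (using Proposition \ref{prop:specialcase} and Proposition \ref{prop:finalestimate}) yields filtered chain homotopies $\mathfrak{Q}\circ\mathfrak{P}\simeq T^{c}\cdot\mathrm{id}_C$ and $\mathfrak{P}\circ\mathfrak{Q}\simeq T^{c}\cdot\mathrm{id}_{C'}$ with $c\le E^-(H)+E^+(H)=\|H\|<\|\psi\|+\e$.

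Now comes the algebraic mechanism, which is the content of the corrected Theorem 6.1.25 \cite{fooo:book}. On homology $\mathfrak{Q}_*\mathfrak{P}_*=T^{c}\cdot\mathrm{id}$ on $H(F^0C)\cong\Lambda_{0,\text{nov}}^{\oplus a}\oplus\bigoplus_{i=1}^{b}\Lambda_{0,\text{nov}}/T^{\lambda_i}$. On a torsion summand of exponent $\lambda_i>c$ the map $T^{c}\cdot\mathrm{id}$ is nonzero, so $\mathfrak{P}_*$ is injective there and carries a generator to a class of torsion order in $[\lambda_i-c,\lambda_i]$; ordering the torsion summands by decreasing exponent and using that $T^{c}\cdot\mathrm{id}$ is as injective as it can be, one gets $b'\ge\#\{i\mid\lambda_i>c\}\ge\#\{i\mid\lambda_i>\|\psi\|+\e\}$. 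Since $N$ and $a$ are fixed integers and $\e>0$ is arbitrary, combining with $N\ge a+2b'$ gives $N\ge a+2\#\{i\mid\lambda_i>\|\psi\|\}$; when $\|\psi\|$ is not one of the $\lambda_i$ this already equals $a+2b(\|\psi\|)$, and the boundary case $\|\psi\|=\lambda_i$ follows by running the same inequality for Hamiltonians with $\|H\|$ decreasing to $\|\psi\|$ and a compactness argument for the moduli spaces of \eqref{eq:CRJH}. This yields \eqref{eq:mainformula}.

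The main obstacle — and the whole point of the construction — is the second step: producing $\mathfrak{P},\mathfrak{Q}$ with the \emph{sharp} bound $c\le\|H\|$. The naive moving-boundary moduli space \eqref{eq:moving} used in \cite{fooo:book} is what leads both to the ill-defined homomorphism on p.\,392 of \cite{fooo:book} and to a non-optimal constant; the remedy is to pass through the dynamical version of Floer theory via $\mathfrak{g}_{H;0}^{\pm}$ and invoke the Chekanov-type energy identity of Lemma \ref{lem:energyestimate} \cite{chekanov}, exactly as in the warm-up computation of Section \ref{sec:nondisplace}. One must also check that the coordinate change, the bulk deformation, and the $A_\infty$-bimodule bookkeeping are all compatible with the action filtration — routine but notation-heavy — and that the transversality required for $\mathfrak{P}$, $\mathfrak{Q}$, and the homotopies can be arranged within the virtual fundamental chain framework of \cite{fooo:book}.
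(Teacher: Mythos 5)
Your proposal follows essentially the same route as the paper: reduce to the special case $(L^{(0)\prime},L^{(1)\prime})=(L^{(0)},\psi(L^{(1)}))$ of Theorem 6.1.25, build the comparison chain maps from the \emph{fixed-boundary} perturbed Cauchy--Riemann equation \eqref{eq:CRJH} together with the coordinate change $\mathfrak{g}^{\pm}$ rather than the moving boundary problem, invoke the Chekanov-type sharp estimate of Section \ref{sec:improved} to get $\mathfrak{Q}\circ\mathfrak{P}\simeq T^{\nu}\mathfrak{i}$ with $\nu\le\|H\|$, and then read off the lower bound on the number of surviving torsion summands.

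One caveat on the algebraic step: your assertion that $\mathfrak{P}_*$ is ``injective'' on a torsion summand with $\lambda_i>c$ is not literally correct (multiplication by $T^{c}$ on $\Lambda_{0,\text{nov}}/T^{\lambda_i}$ has nontrivial kernel whenever $c>0$), and summand-by-summand reasoning is fragile because images of distinct summands can interact. The paper instead compares the \emph{minimal number of generators} over $\Lambda_{0,\text{nov}}$: the image of $T^{\nu}$ on $T^{\lambda}HF$ has exactly $a+b(\lambda+\nu)$ generators, but since this image factors through $T^{\lambda}HF'$ it has at most $a+b'(\lambda)$, giving $b'(\lambda)\ge b(\lambda+\nu)$ for generic $\lambda$ and hence $\lambda'_{\downarrow,i}\ge\lambda_{\downarrow,i}-\nu$ when $\lambda_{\downarrow,i}>\nu$. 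Your argument should be reformulated along those lines; once that is done, your deduction of \eqref{eq:mainformula} is the same as the paper's (and your appeal to a limiting compactness argument for the boundary case $\|\psi\|=\lambda_i$ is replaced in the paper by taking $\nu\to\nu_0\le\|\psi\|$ via Remark \ref{rem:optimalestimate} and invoking the Lipschitz statement of Theorem 6.1.25 as formulated).
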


Theorem \ref{thm:correctedthmJ} follows from the following
Theorem 6.1.25 of \cite{fooo:book} (see Subsection 6.5.3 \cite{fooo:book}).
The proof of Theorem 6.1.25 contained an error which
we now fix.

We recall that a symplectic diffeomorphism
$\psi : (X,L) \to (X,L')$ induces a bijection
$$
\psi_{\ast} : \MM_{\text{weak,def}}(L) \to \MM_{\text{weak,def}}(L')
$$ which is compatible
with the maps $\pi_{\text{amb}}$ and $\frak{PO}$.
See Theorem B (B.3) \cite{fooo:book}.

\begin{thm}[Theorem 6.1.25 \cite{fooo:book}]\label{Theorem2}
Let $(L^{(0)}, L^{(1)})$ and
$(\text{\bf b}_1,\text{\bf b}_0)$
be as in Theorem \ref{thm:correctedthmJ}, and
$\psi^{(i)} : X \to X$ $(i=0,1)$ Hamiltonian diffeomorphisms. Put
$L^{(i)\prime}=\psi^{(i)}(L^{(i)})$.
Let $\lambda_{\downarrow,i}$, $i=1,\dots,b$ and $\lambda_{\downarrow,i}'$ $i=1,\dots,b'$ be the torsion exponents
of the Floer cohomology
$$
HF((L^{(1)},\text{\bf b}_1), (L^{(0)},\text{\bf b}_0);\Lambda_{0,\text{\rm nov}}),
\,~ HF((L^{(1)\prime},\psi^{(1)}_{\ast}\text{\bf b}_1), (L^{(0)\prime},
\psi^{(0)}_{\ast}\text{\bf b}_0);\Lambda_{0,\text{\rm nov}})
$$
respectively. We order them so that $\lambda_{\downarrow,i} \ge \lambda_{\downarrow,i+1}$
and
$\lambda_{\downarrow,i}' \ge \lambda_{\downarrow,i+1}'$.
Denote
\be\label{def:nu0}
\nu_0 = \dist(L^{(0)},L^{(0)\prime}) + \dist(L^{(1)},L^{(1)\prime}).
\ee
Then if $\lambda_{\downarrow,i} >  \nu_0$, then $i\le b'$, and
if $\lambda_{\downarrow,i} > \nu_0$ and $\lambda'_{\downarrow, i} > \nu_0$, then we have
\be\label{eq:Lipmainformula}
\mid \lambda_{\downarrow,i} - \lambda'_{\downarrow,i} \mid
\le \nu_0.
\ee
In particular, $\lambda_{\downarrow,i}$ is continuous for each
$i$ as long as $\lambda_{\downarrow,i} > 0$.
\end{thm}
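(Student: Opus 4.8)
The plan is to reduce Theorem~\ref{Theorem2} to the existence of a pair of \emph{filtered} $A_\infty$-bimodule homomorphisms between $C(L^{(1)},L^{(0)};\Lambda_{\mathrm{nov}})$ and $C(L^{(1)\prime},L^{(0)\prime};\Lambda_{\mathrm{nov}})$ whose filtration behaviour is controlled by the estimates of Section~\ref{sec:improved}, followed by a purely algebraic comparison of the torsion exponents of finitely generated $\Lambda_{0,\mathrm{nov}}$-modules of the form \eqref{eq:HF0nov}.

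First I would fix $\epsilon>0$ and choose normalized Hamiltonians $H^{(i)}$ with $\phi^1_{H^{(i)}}=\psi^{(i)}$ and $\|H^{(i)}\|\le\dist(L^{(i)},L^{(i)\prime})+\epsilon/2$, normalizing the base paths $\ell_a$ on each connected component of $\Omega(L^{(0)},L^{(1)})$ so that the constants $c(\widehat H;\ell_a)$ of Remark~\ref{rem:constant} vanish. Instead of the moving-boundary equation \eqref{eq:moving} (whose use in \cite{fooo:book}, p.~392, produced the ill-defined homomorphisms), I would build a bimodule homomorphism $\varphi$ from the moduli spaces of the Hamiltonian-perturbed Cauchy--Riemann equation \eqref{eq:CRJH} with \emph{fixed} Lagrangian boundary for $\rho=\rho_+$, transported through the coordinate change ${\mathfrak g}_{H^{(0)},H^{(1)}}^{+}$ of Section~\ref{sec:comparison}; Proposition~\ref{prop:improvedestimate} then shows $\varphi$ changes the action filtration by at most $E^-(H^{(0)})+E^+(H^{(1)})$. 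The same construction with $\rho=\rho_-$ produces a homomorphism $\bar\varphi$ in the reverse direction changing the filtration by at most $E^+(H^{(0)})+E^-(H^{(1)})$, and since $E^-(H)+E^+(H)=\|H\|$ the two shifts add up to at most $\|H^{(0)}\|+\|H^{(1)}\|\le\nu_0+\epsilon$. Concatenating the two equations along the one-parameter family $\rho_K$ of \eqref{rho_K} and degenerating $K\to 0$ exhibits $\bar\varphi\circ\varphi$ and $\varphi\circ\bar\varphi$ as chain homotopic to the respective identity maps through homotopies whose filtration shift is, by Proposition~\ref{prop:finalestimate}, again at most $\|H^{(0)}\|+\|H^{(1)}\|\le\nu_0+\epsilon$. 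Along the way one must verify that $\varphi,\bar\varphi$ are genuine filtered $A_\infty$-bimodule homomorphisms compatible with the bulk data $(\mathbf{b}_1,\mathbf{b}_0)$ and that disk and sphere bubbles are absorbed exactly as in the construction of the bimodule operations in \cite{fooo:book}; this is where the condition $\mathfrak{PO}(\mathbf{b}_1)=\mathfrak{PO}(\mathbf{b}_0)$ from \eqref{eq:MMfiber} and weak unobstructedness enter, and it is also where one treats $\Omega(L^{(0)},L^{(1)})$ as a direct sum over its connected components (Remark~\ref{rem:connectedcomp}).

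With such a pair in hand, passing to Floer cohomology gives, over $\Lambda_{\mathrm{nov}}$, mutually inverse isomorphisms $\varphi_*,\bar\varphi_*$ (consistent with Theorem~G~(G.4) and \eqref{eq:HFLambda}, so the Betti numbers $a$ coincide), and over $\Lambda_{0,\mathrm{nov}}$ a pair of maps $P=\varphi_*$, $Q=\bar\varphi_*$ between modules decomposed as in \eqref{eq:HF0nov} (Theorem~6.1.20~\cite{fooo:book}) with the property that, after shifting indices, the persistence filtrations $\{HF^{\le\lambda}\}$ of the two pairs are interleaved with total round-trip shift $\le\nu_0+\epsilon$. The algebraic input is then: for finitely generated $\Lambda_{0,\mathrm{nov}}$-modules so related, the decreasingly ordered torsion exponents satisfy that $\lambda_{\downarrow,i}>\nu_0+\epsilon$ forces the $i$-th torsion exponent $\lambda'_{\downarrow,i}$ of the other module to exist and then $|\lambda_{\downarrow,i}-\lambda'_{\downarrow,i}|\le\nu_0+\epsilon$. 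This is proved by pushing the first $i$ torsion summands $\Lambda_{0,\mathrm{nov}}/T^{\lambda_{\downarrow,j}}$ ($j\le i$) forward by $P$: each survives because the structure map $HF^{\le\lambda}\to HF^{\le\lambda+\nu_0+\epsilon}$ is still injective on it and $Q\circ P$ recovers it, and the structure theory of finitely generated $\Lambda_{0,\mathrm{nov}}$-modules then forces $\lambda'_{\downarrow,j}\ge\lambda_{\downarrow,j}-\nu_0-\epsilon$, and symmetrically. Letting $\epsilon\to 0$ yields the assertions $\lambda_{\downarrow,i}>\nu_0\Rightarrow i\le b'$ and \eqref{eq:Lipmainformula}; the final continuity statement follows by letting, say, $\psi^{(1)}$ vary and applying \eqref{eq:Lipmainformula} locally at every point where $\lambda_{\downarrow,i}>0$.

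The main obstacle is the construction in the second paragraph: extracting from the displacement data an $A_\infty$-bimodule homomorphism with the \emph{optimal} filtration estimate. One cannot read the sharp bound off the moving-boundary moduli space \eqref{eq:moving}; the device that makes it work is to run the \emph{fixed}-boundary equation \eqref{eq:CRJH}, intertwining the geometric and dynamical versions of Floer theory, and then to apply the coordinate change of Section~\ref{sec:comparison} so that Lemma~\ref{lem:gH0+} and Propositions~\ref{prop:improvedestimate}, \ref{prop:finalestimate} apply verbatim, while simultaneously checking that this moduli space still assembles into a filtered bimodule homomorphism in the bulk-deformed, weakly unobstructed setting of \cite{fooo:book} and that its self-compositions are filtered-homotopic to the identity. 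Arranging the asymmetric $E^-/E^+$ bookkeeping so that the shifts add up to $\nu_0$ rather than $2\nu_0$, and handling all connected components together with the uniform vanishing of the constants $c(\widehat H;\ell_a)$, are the remaining points that require care.
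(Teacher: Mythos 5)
Your proposal is correct and follows essentially the same route as the paper: choose Hamiltonians nearly realizing the Hofer distances, construct $\varphi$ and $\bar\varphi$ via the fixed-boundary equation \eqref{eq:CRJH} with $\rho_\pm$ and the coordinate change, bound the filtration shifts by $\nu_\pm$ using Proposition~\ref{prop:improvedestimate}, show the composition is homotopic to $T^\nu\frak{i}$ via the $\rho_K$ family and Proposition~\ref{prop:finalestimate}, compare torsion exponents algebraically, and let the slack go to zero. The only cosmetic difference is that you phrase the final algebraic step in persistence-module/interleaving language, whereas the paper counts minimal numbers of generators of $T^\lambda HF(\cdot;\Lambda_{0,\text{\rm nov}})$; these are equivalent.
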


\begin{rem}
Let $\lambda \in {\R}$ such that $\lambda > 2 \|H\|$.
In the statement (6.5.30) in p. 391 \cite{fooo:book}, we obtained
the chain maps
\bea
&{}&\phi:T^\lambda
C(L^{(1)},L^{(0)};\Lambda_{0,\text{\rm nov}}) \to
T^{\lambda-\|H\|}C(L^{(1)\prime},L^{(0)\prime});\Lambda_{0,\text{\rm nov}}))\label{eq:phi} \\
&{}&\phi':T^{\lambda-\|H\|}C(L^{(1)},L^{(0)};\Lambda_{0,\text{\rm nov}})
\to T^{\lambda-2\|H\|}C(L^{(1)},L^{(0)};\Lambda_{0,\text{\rm nov}})  \label{eq:phi'}.
\eea
The above mentioned error lies in the fact that the composition of \eqref{eq:phi} and \eqref{eq:phi'}
only chain homotopy equivalent to the inclusion
$$
\frak i : T^{\lambda}C(L^{(1)},L^{(0)};\Lambda_{0,\text{\rm nov}}) \longrightarrow
T^{\lambda-2\|H\|}C(L^{(1)},L^{(0)};\Lambda_{0,\text{\rm nov}})
$$
if we use the original energy estimate given in
Proposition 5.3.20 (Proposition 5.3.45)
\cite{fooo:book}.
Therefore we need to replace the rest of the proof by the following argument
which uses the construction of an optimal chain maps combining the coordinate
transformations explained in the previous sections and
the improved energy estimate.
\end{rem}

\subsection{Proof of Theorem 6.1.25 \cite{fooo:book}}
\label{subsec:6.1.25}

In this subsection we prove Theorem 6.1.25 \cite{fooo:book}.

Consider the pair $(\psi^{(0)},\psi^{(1)})$ of Hamiltonian diffeomorphisms.
As in \cite{fooo:book}, to simplify the notation,
we restrict ourselves to the case of a
transverse pair $(L^{(0)},L^{(1)})$ where both
$L^{(i)}$ are unobstructed, i.e., $\MM(L^{(i)}) \ne \emptyset$.
Then using bounding cochains $b_i \in \MM(L^{(i)})$,
we can define the coboundary operator $\delta_{b_1,b_0}$
on the filtered $A_{\infty}$ bimodule
$C(L^{(1)},L^{(0)};\Lambda_{0,\text{nov}})=F^0C(L^{(1)},L^{(0)};\Lambda_{\text{nov}})$.
(See Subsection 3.7.4 \cite{fooo:book}.)
Similarly, we have the coboundary operator
$\delta_{b_1^{\prime},b_0^{\prime}}$
on $C(L^{(1)\prime},L^{(0)\prime};\Lambda_{0,\text{nov}})=
F^0C(L^{(1)\prime},L^{(0)\prime};\Lambda_{\text{nov}})$, where
we put $b_i^{\prime}:=\psi^{(i)}_{\ast}b_i$.

Let $\delta > 0$ be given. We consider any Hamiltonian isotopy
$\phi_{H^{(0)}}, \, \phi_{H^{(1)}}$ generated by $H^{(0)}, \, H^{(1)}$
respectively such that $\phi_{H^{(i)}}^1 =\psi^{(i)}$,
\be\label{eq:Lprime} L^{(0)\prime} =
\psi^{(0)}(L^{(0)}), \quad L^{(1)\prime} =
\psi^{(1)}(L^{(1)})
\ee
and \be\label{eq:lengdelta} \leng(\phi_{H^{(0)}}) +
\leng(\psi_{H^{(1)}}) \leq \dist(L^{(0)}, L^{(0)\prime}) +
\dist(L^{(1)}, L^{(1)\prime}) + \delta. \ee Denote
$$
\nu_- = E^-(H^{(0)}) + E^+(H^{(1)}) ,
\quad \nu_+ =
E^+(H^{(0)}) + E^-(H^{(1)})
$$
and
$$
\nu : = \nu_- + \nu_+ = \|H^{(0)}\| + \|H^{(1)}\| = \leng(\phi_{H^{(0)}})
+\leng(\phi_{H^{(1)}}).
$$
We note that we can make $\nu$
as close to $\nu_0$ in \eqref{def:nu0}
as we want. See Remark \ref{rem:optimalestimate}.

We construct a filtered $A_{\infty}$ bimodule homomorphism
$$
\phi : C(L^{(1)},L^{(0)};\Lambda_{\text{nov}})
\to C(L^{(1)\prime},L^{(0)\prime};\Lambda_{\text{nov}}).
$$
One such construction is provided in in \cite{fooo:book}.
See (6.5.14) and (6.5.15) therein.
\par
However, we would like to have an additional property that is required in
Theorem \ref{Theorem2} above.
In \cite{fooo:book}, we used the moduli spaces of solutions for \eqref{eq:moving}, which is the
equation of moving Lagrangian boundary value problem.
In this article we use the moduli spaces of solutions
$u$ for \eqref{eq:CRJH}, instead of \eqref{eq:moving},
with $\rho=\rho_+$ such that
$$
\lim_{\tau \to -\infty} u(\tau, \cdot )=\ell_p,  \  \lim_{\tau \to +\infty} u(\tau,\cdot )=
{\mathfrak g}_{H^{(0)},H^{(1)}}^{+} (\ell_{q''}), \
\Phi_{H^{(0)},H^{(1)}} (w \#u) \sim w''
$$
as in Proposition \ref{prop:improvedestimate}.
Then by identifying
$\text{Crit}~{\mathcal A}_{\widehat{H},\ell_a}$ with
$\text{Crit}~{\mathcal A}_{\ell_a''}$
we obtain a filtered $A_{\infty}$ bimodule
homomorphism $\phi$ in a way similar to
Lemma 5.3.25 and Lemma 5.3.8 in \cite{fooo:book}.
The filtered $A_{\infty}$ bimodule homomorphism
induces a morphism of cochain complexes,
which we also denote by $\phi$ by an abuse of notation:
\be\label{eq:imporvedphi}
\phi: C(L^{(1)},L^{(0)};\Lambda_{\text{nov}}) \to C(L^{(1)\prime},L^{(0)\prime};\Lambda_{\text{nov}}).
\ee
Similarly, we use the moduli spaces of solutions $u$ for \eqref{eq:CRJH} with
$\rho=\rho_-$ such that
$$
\lim_{\tau \to -\infty} u(\tau, \cdot )=
{\mathfrak g}_{H^{(0)},H^{(1)}}^{+} (\ell_{p''}),
\  \lim_{\tau \to +\infty} u(\tau,\cdot )=\ell_{q}, \
\Phi_{H^{(0)},H^{(1)}} ^{-1}(w'') \#u \sim w,
$$
to obtain
\be \label{eq:improvedphi'}
\phi':C(L^{(1)\prime}, L^{(0)\prime};\Lambda_{\text{nov}}) \to
C(L^{(1)},L^{(0)};\Lambda_{\text{nov}}).
\ee
Since we choose $\ell_a$ in such a way that $c(\widehat{H},\ell_a)=0$
for all $a \in \pi_0(\Omega(L^{(0)},L^{(1)}))$,
Proposition \ref{prop:improvedestimate} implies that these composition satisfies
$$\phi: F^{\lambda} C(L^{(1)},L^{(0)};\Lambda_{\text{nov}}) \to
F^{\lambda - \nu_-} C(L^{(1) \prime},L^{(0) \prime};\Lambda_{\text{nov}}).$$
Similarly, we obtain
$$\phi':F^{\lambda} C(L^{(1) \prime},L^{(0) \prime};\Lambda_{\text{nov}}) \to
F^{\lambda - \nu_+} C(L^{(1)},L^{(0)};\Lambda_{\text{nov}}).$$
This leads to the chain map
\be\label{eq:phi'phi}
\phi'\circ \phi : F^{\lambda}C(L^{(1)},L^{(0)};\Lambda_{\text{\rm nov}}) \longrightarrow
F^{\lambda-\nu}C(L^{(1)},L^{(0)};\Lambda_{\text{\rm nov}}).
\ee
Equivalently, we can rewrite these into the chain maps
$$
T^{\nu_-}\phi
: F^{\lambda} C(L^{(1)}, L^{(0)};\Lambda_{\text{\rm nov}}) \to
F^{\lambda} C(L^{(1)\prime}, L^{(0)\prime};\Lambda_{\text{\rm nov}})
$$
and
$$
T^{\nu_+}\phi'
: F^{\lambda}C(L^{(1)\prime}, L^{(0)\prime};\Lambda_{\text{\rm nov}}) \to
F^{\lambda} C(L^{(1)}, L^{(0)};\Lambda_{\text{\rm nov}}).
$$
Setting $\lambda=0$, we have
$$
(T^{\nu_-}\phi)_* : HF((L^{(1)},b_1), (L^{(0)},b_0);\Lambda_{0,\text{\rm nov}})
\to
HF((L^{(1)\prime},b_1^{\prime}), (L^{(0)\prime}, b_0^{\prime});\Lambda_{0,\text{\rm nov}})
$$
and
$$
(T^{\nu_+}\phi')_* :
HF((L^{(1)\prime},b_1^{\prime}), (L^{(0)\prime}, b_0^{\prime});\Lambda_{0,\text{\rm nov}})
\to
HF((L^{(1)},b_1), (L^{(0)},b_0);\Lambda_{0,\text{\rm nov}})
$$
respectively.

We denote
$$
\frak i:F^{\lambda}C(L^{(1)},L^{(0)};\Lambda_{\text{\rm nov}})  \hookrightarrow
F^{\lambda-\nu}C(L^{(1)},L^{(0)};\Lambda_{\text{\rm nov}})
$$
the inclusion induced homomorphism.
\begin{lem}
The two maps
$$
(T^{\nu_+}\phi')\circ (T^{\nu_-}\phi), \, T^{\nu} \frak i:
F^{\lambda}C(L^{(1)},L^{(0)};\Lambda_{\text{\rm nov}}) \longrightarrow
F^{\lambda}C(L^{(1)},L^{(0)};\Lambda_{\text{\rm nov}})
$$
are chain homotopic to each other.
\end{lem}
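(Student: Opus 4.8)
The plan is to realize the asserted chain homotopy geometrically, by a one--parameter family of perturbed Cauchy--Riemann equations interpolating between the concatenation of the two strips that define $\phi$ and $\phi'$ and the trivial (unperturbed) equation; this is the standard mechanism by which a continuation map composed with its reverse becomes homotopic to the identity, the only new feature being the bookkeeping of filtrations, which is supplied by Proposition \ref{prop:finalestimate}. Concretely, for $K\in[0,\infty)$ let $\rho_K$ be the elongation function of \eqref{rho_K} (for $0\le K\le 1$ deformed down to $\rho_{K=0}\equiv 0$). One considers, for generators and a homotopy class, the moduli space $\MM^{K}(\ell_p,\ell_q;B)$ of finite energy solutions $u$ of \eqref{eq:CRJH} with $\rho=\rho_K$, with the \emph{fixed} Lagrangian boundary $(L^{(0)},L^{(1)})$, asymptotics $u(-\infty)=\ell_p$, $u(+\infty)=\ell_q$ and $w_-\#u\sim w_+$; exactly as in the constructions \eqref{eq:imporvedphi} and \eqref{eq:improvedphi'} of $\phi$ and $\phi'$ one equips these with boundary marked points, so as to obtain a homotopy of filtered $A_\infty$-bimodule homomorphisms and not merely a cochain homotopy, and one works with virtual fundamental chains and bulk deformations as in Section~5.3 and Subsection~6.5.3 of \cite{fooo:book}. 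Set
$$
\MM^{para}(\ell_p,\ell_q;B)=\bigcup_{K\in[0,\infty)}\{K\}\times \MM^{K}(\ell_p,\ell_q;B),
$$
let $h$ be the signed $\Lambda_{\text{nov}}$-linear count of its zero-dimensional part, weighted as usual by $T^{\int u^*\omega}e^{\mu}$, and put $\mathfrak h:=T^{\nu}h$.

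Next I would check that $\mathfrak h$ is well defined and of the right type. For this the action bound \eqref{action bound} of Proposition \ref{prop:finalestimate}, together with the choice of base paths $\ell_a$ making $c(\widehat H;\ell_a)=0$, shows that $h$ lowers the filtration by at most $\nu$, so $\mathfrak h$ is a genuine filtered $\Lambda_{0,\text{nov}}$-linear endomorphism of $C(L^{(1)},L^{(0)};\Lambda_{\text{nov}})$; the energy bound \eqref{energy bound}, valid uniformly in $K\in[0,\infty]$, is what drives the Gromov-type compactness below and pins down the same shift $\nu$ at every value of $K$.

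The heart of the argument is then the boundary analysis of the compactified one-dimensional space $\overline{\MM}^{para}(\ell_p,\ell_q;B)$. One shows its boundary consists of: (i) the fibre over $K=0$, where \eqref{eq:CRJH} degenerates to the genuine $\delbar$-equation with fixed boundary $(L^{(0)},L^{(1)})$, whence by positivity of symplectic area only the constant solutions contribute in the relevant dimension and the stratum assembles into the identity bimodule homomorphism $\frak i$; (ii) the limit $K\to\infty$, where (centring a solution at each of the two transition regions near $\tau=\mp K$ and invoking compactness) a solution breaks into a $\rho_+$-solution followed, through the plateau on which the equation is the $\widehat H$-perturbed Floer equation, by a $\rho_-$-solution, and the coordinate changes $\frak g^{\pm}_{H^{(0)},H^{(1)}}$ of Section \ref{sec:improved} identify the $\rho_+$-piece with a matrix element of $\phi$ and the $\rho_-$-piece with one of $\phi'$, so that after multiplying the whole identity by $T^{\nu}$ this stratum becomes $(T^{\nu_+}\phi')\circ(T^{\nu_-}\phi)$; (iii) the strata where a Floer trajectory (or a higher $A_\infty$-bimodule operation, in the presence of the marked points) for the pair $(L^{(0)},L^{(1)})$ splits off at $\tau=\pm\infty$, contributing $\delta_{b_1,b_0}\circ\mathfrak h+\mathfrak h\circ\delta_{b_1,b_0}$ together with the lower $A_\infty$-terms; (iv) disk and sphere bubbling, absorbed by the virtual fundamental chain machinery of \cite{fooo:book} precisely because $b_i$ and $b_i'$ are (weak) bounding cochains, hence contributing nothing new. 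Equating the signed count of the boundary to zero and multiplying by $T^{\nu}$ yields
$$
(T^{\nu_+}\phi')\circ(T^{\nu_-}\phi)-T^{\nu}\frak i=\delta_{b_1,b_0}\circ\mathfrak h+\mathfrak h\circ\delta_{b_1,b_0},
$$
which is exactly the asserted chain homotopy.

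I expect the delicate point to be step (ii): one must verify that the two coordinate changes $\frak g^+_{H^{(0)},H^{(1)}}$ used separately in the constructions of $\phi$ and $\phi'$ match along the intermediate critical points of $\CA_{\widehat H,\ell_a}$, so that the $K=\infty$ stratum is genuinely the composition $\phi'\circ\phi$ rather than a twisted variant, and that the filtration drops $\nu_-$ (for $\phi$, via \eqref{eq:improvedestimate}) and $\nu_+$ (for $\phi'$, via \eqref{eq:improvedestimate'}) combine to exactly the shift $\nu$ controlled by \eqref{action bound} for the whole $\rho_K$-family. The uniformity in $K\in[0,\infty]$ of the estimate \eqref{energy bound} is what guarantees this consistency, and is precisely what replaces the incorrect step in the original proof of Theorem 6.1.25 in \cite{fooo:book}. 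The remaining ingredients---transversality for the parametrized moduli spaces, control of bubbling, and the $A_\infty$-bimodule bookkeeping---should be routine given the apparatus of \cite{fooo:book}.
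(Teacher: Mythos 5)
Your proof is correct and follows essentially the same strategy as the paper's: define the chain homotopy from the parametrized moduli spaces $\MM^{K}$ of solutions of \eqref{eq:CRJH} with the elongation functions $\rho_K$, $0\le K<\infty$, identify the $K=0$ and $K=\infty$ boundary strata with $\frak i$ and $\phi'\circ\phi$, and control the filtration shift using \eqref{action bound} of Proposition \ref{prop:finalestimate} uniformly in $K$. The only difference is one of exposition: the paper compresses the boundary analysis into a citation of pp.~390--391 of \cite{fooo:book}, whereas you have spelled out the four boundary strata explicitly, which is a faithful unpacking of what that citation refers to.
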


\begin{proof}
This last statement follows from the arguments in p.390-391 \cite{fooo:book},
and also from the explicit energy bound \eqref {energy bound} in
Proposition \ref{prop:finalestimate} for solutions $u$ of \eqref{eq:CRJH} with
$\widehat{H}$ and $\rho= \rho_K$, $0 \leq K < \infty$, which are used to define the chain homotopy map.
Recall that $\rho_K$ extends smoothly to $K=0$ as the constant function zero.
Then the moduli spaces of solutions of \eqref{eq:CRJH} with $\rho=\rho_K$ in \eqref{rho_K}
defines a chain homotopy between $\phi' \circ \phi$ and the identity.

As for the filtrations, we apply \eqref{action bound} in Proposition \ref{prop:finalestimate} to
all the elements in the associated parameterized moduli space defining the
chain homotopy and find that the energy loss is bounded by $\nu$ for all $K$.
This proves that
$\phi' \circ \phi$ is chain homotopic to $\frak i$ as a map
$$
F^{\lambda}C(L^{(1)}, L^{(0)};\Lambda_{\text{\rm nov}}) \to
F^{\lambda-\nu}C(L^{(1)}, L^{(0)};\Lambda_{\text{\rm nov}}).
$$
\end{proof}

From now on, we consider the case that $\lambda=0$.
Then we have
\begin{equation}\label{eq:comphomot}
(T^{\nu_+}\phi')_*  \circ (T^{\nu_-}\phi)_* = T^{\nu}
\end{equation}
where
$$
T^{\nu} :
HF((L^{(1)},b_1), (L^{(0)},b_0);\Lambda_{0,\text{\rm nov}})
\to
HF((L^{(1)},b_1), (L^{(0)},b_0);\Lambda_{0,\text{\rm nov}})
$$
is the map $x \mapsto T^{\nu} x$.
\par
Since
$$
(T^{\nu_-}\phi)_*: HF((L^{(1)},b_1), (L^{(0)},b_0);\Lambda_{0,\text{\rm nov}}) \to
HF((L^{(1)\prime},b_1^{\prime}), (L^{(0)\prime},b_0^{\prime});\Lambda_{0,\text{\rm nov}})
$$
and
$$
(T^{\nu_+}\phi')_*: HF((L^{(1)\prime},b_1^{\prime}), (L^{(0)\prime},b_0^{\prime});\Lambda_{0,\text{\rm nov}}) \to
HF((L^{(1)},b_1),(L^{(0)},b_0);\Lambda_{0,\text{\rm nov}})
$$
are $\Lambda_{0,\text{\rm nov}}$-module
homomorphisms,
we have, for any $\lambda > 0$,
$$
(T^{\nu_-}\phi)_*
: T^{\lambda}HF((L^{(1)}b_1), (L^{(0)},b_0);\Lambda_{0,\text{\rm nov}}) \to
T^{\lambda}HF((L^{(1)\prime},b_1^{\prime}), (L^{(0)\prime}, b_0^{\prime});\Lambda_{0,\text{\rm nov}})
$$
and
$$
(T^{\nu_+}\phi')_*
: T^{\lambda}HF((L^{(1)\prime},b_1^{\prime}), (L^{(0)\prime}, b_0^{\prime});\Lambda_{0,\text{\rm nov}})
 \to
T^{\lambda}HF((L^{(1)}b_1), (L^{(0)},b_0);\Lambda_{0,\text{\rm nov}}).
$$
Since $(T^{\nu_+}\phi')_* \circ (T^{\nu_-} \phi)_*$ is equal to the multiplication by $T^{\nu}$,
the minimal number of generators of $\text{Im }(T^{\nu_+}\phi')_* \circ (T^{\nu_-} \phi)_*$ is
equal to $a+ b(\lambda + \nu)$ if $\lambda + \nu\notin \{\lambda_{\downarrow,i}\mid i= 1, \dots, b\}$.
On the other hand, the minimal number of generators of $T^{\lambda}HF((L^{(1)\prime},b_1^{\prime}),(L^{(0)\prime},b_0^{\prime});\Lambda_{0,\text{\rm nov}})$ is equal to $a+b'(\lambda)$ if $\lambda \notin \{\lambda'_{\downarrow,i}\mid i=1, \dots, b'\}$.
Here $$b'(\lambda)=\#\{i \mid \lambda'_{\downarrow,i} \geq \lambda\}.$$
Therefore we have
$$
a+b(\lambda + \nu) \geq a+ b'(\lambda)
$$
for
$\lambda \notin \{\lambda_{\downarrow,i} - \nu \mid i=1, \dots,b\} \cup \{\lambda'_{\downarrow,i} \mid i=1, \dots b'\}$ cf. Lemma 6.5.31 in \cite{fooo:book}.
This implies that $\lambda'_{\downarrow,i} \geq \lambda_{\downarrow,i} -\nu$ whenever
$\lambda_{\downarrow,i} > \nu$.

Since this holds for all Hamiltonian isotopies
$\phi_{H^{(0)}}, \, \phi_{H^{(1)}}$ satisfying \eqref{eq:Lprime}, \eqref{eq:lengdelta}
and for any $\delta > 0$, we obtain
\begin{equation}
\text{ if } \lambda_{\downarrow,i} > \nu, \quad \lambda_{\downarrow,i} \le \nu + \lambda'_{\downarrow,i}.
\end{equation}
By changing the role of $L^{(1)}, L^{(0)}$ with
$L^{(1) \prime}, L^{(0) \prime}$ we also obtain
\begin{equation}
\text{ if } \lambda'_{\downarrow,i} > \nu, \quad \lambda'_{\downarrow,i} \le \nu + \lambda_{\downarrow,i}.
\end{equation}
Theorem \ref{Theorem2} follows.
\qed

\begin{rem}\label{rem:optimalestimate} With given fixed
$L^{(0)\prime} \in \frak{Iso}(L^{(0)})$ and $L^{(1)\prime} \in \frak{Iso}(L^{(1)})$, we may consider
{\it all} possible Hamiltonian isotopy with given end points and take the infimum
of $\leng(\phi_{H^{(0)}}) + \leng(\phi_{H^{(1)}})$ over all $H^{(0)}$ and
$H^{(1)}$ such that
$$
\phi_{H^{(0)}}^1(L^{(0)}) = L^{(0)\prime}, \quad \phi_{H^{(1)}}^1(L^{(1)}) = L^{(1)\prime}.
$$
In this way, we can make $\leng(\phi_{H^{(0)}}) + \leng(\phi_{H^{(1)}})$
as close to the sum
$$
\dist(L^{(0)},L^{(0)\prime}) + \dist(L^{(1)},L^{(1)\prime})
$$
as we want.
\end{rem}

\section{Torsion threshold and displacement energy}\label{sec:torthre}
As we mentioned, the torsion exponents of the Floer
cohomology have some information on the displacement energy of Lagrangian submanifolds.
We introduce the following notion to describe
a relation between the torsion exponents and
the displacement energy.

\begin{defn}\label{defn:TLL'}
Let $L^{(1)}, L^{(0)}$ be weakly unobstructed Lagrangian submanifolds after bulk deformations.
Let
$$
(\text{\bf b}_1,\text{\bf b}_0) \in
\MM_{\text{weak,def}}(L^{(1)})
\times_{(\pi_{\text{amb}},\frak{PO})}
\MM_{\text{weak,def}}(L^{(0)})
$$
as in \eqref{eq:MMfiber}.
Suppose $HF((L^{(1)},\text{\bf b}_1),(L^{(0)},\text{\bf b}_0);\Lambda_{\text{\rm nov}}) = 0$.
We denote by $\lambda_i$ its torsion exponents
defined by \eqref{eq:HF0nov}.
\par
(1)
We define
$$
\frak
T((L^{(1)},\text{\bf b}_1),(L^{(0)},\text{\bf b}_0)) = \max_{i} \lambda_i
$$
and
call it the \emph{torsion threshold} of the pair $(L^{(0)},L^{(1)})$
relative to $(\text{\bf b}_1,\text{\bf b}_0)$.
\par
(2) We define
$$
\frak T(L^{(1)}, L^{(0)}) =
\sup_{(\text{\bf b}_1,\text{\bf b}_0)}
T((L^{(1)},\text{\bf b}_1),(L^{(0)},\text{\bf b}_0))
$$
and call it the
\emph{torsion threshold} of the pair $(L^{(0)},L^{(1)})$.
\par
When $HF((L^{(1)},\text{\bf b}_1),(L^{(0)},\text{\bf b}_0);\Lambda_{\text{\rm
nov}}) \neq 0$ for some
$(\text{\bf b}_1, \text{\bf b}_0)$, we define
$$\frak T(L^{(1)},L^{(0)}) = \infty.$$
\par
(3)
In the case $b_i \in \MM_{\text{weak}}(L^{(i)})$, we define $\frak{T}((L^{(1)},b_1),(L^{(0)},b_0))$ and
$\frak{T}(L^{(1)}, L^{(0)})$
in a similar manner. Here the supremum is taken over the set
$$
(b_1, b_0) \in
\{ (b_1, b_0) \in \MM_{\text{weak}}(L^{(1)})\times \MM_{\text{weak}}(L^{(0)}) ~\vert~
\frak{PO} (b_1) =
\frak{PO} (b_0)
\}.
$$
\par
(4) We just
denote $\frak T((L,\text{\bf b}),(L,\text{\bf b}))$ and
$\frak T(L,L)$
by $\frak T(L,\text{\bf b})$ and
$\frak T(L)$ respectively.
\end{defn}

\smallskip
We now specialize the energy estimate in the previous section
to the particular case
$$
(L^{(0)},L^{(1)}) = (L,L), \quad (L^{(0)\prime},L^{(1)\prime}) = (L,\psi^{(1)}(L))
$$
with the displacing condition
\be\label{eq:nointersect} L \cap
\psi^{(1)}(L) = \emptyset.
\ee
Then the following theorem relating the displacement energy and the
torsion threshold of $L$ is a special case of Theorem J. For readers' convenience, we
give its proof which specializes the proof of Theorem J to this particular context.

\begin{thm}\label{thm:eLTL}
Let $L$ be a relatively spin closed Lagrangian submanifold of $(X,\omega)$.
Suppose that $L$ is weakly unobstructed after bulk deformation and displaceable. We denote by
$e(L)(=e^X(L))$ its displacement energy.
Let the torsion threshold of
$HF((L, \text{\bf{b}}),(L,\text{\bf{b}});\Lambda_{0,\text{nov}})
$ be positive, i.e. assume $\frak T(L,\text{\bf{b}}) > 0$. Then we have
$$
e(L) \geq \frak T(L,\text{\bf{b}})
$$
for any $\text{\bf{b}} \in \MM_{\rm{weak,def}}(L)$. In particular, we have $e(L) \geq \frak T(L)$.
\end{thm}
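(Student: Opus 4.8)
The plan is to specialize Theorem \ref{Theorem2} (equivalently, the energy estimates of Section \ref{sec:improved}) to the pair $(L^{(0)},L^{(1)})=(L,L)$ with the second factor displaced by $\psi=\psi^{(1)}$, $L^{(0)\prime}=L$, $L^{(1)\prime}=\psi(L)$, and then exploit the fact that the displacing condition \eqref{eq:nointersect} forces the Floer cohomology of the displaced pair to vanish. First I would fix $\text{\bf b}\in\MM_{\text{weak,def}}(L)$ with $\frak T(L,\text{\bf b})>0$, and let $\lambda=\frak T(L,\text{\bf b})=\max_i\lambda_i$ be the top torsion exponent of $HF((L,\text{\bf b}),(L,\text{\bf b});\Lambda_{0,\text{\rm nov}})$. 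Since $L$ is displaceable, pick any Hamiltonian $H^{(1)}=H$ with $\psi=\phi_H^1$ displacing $L$, and take $H^{(0)}\equiv 0$ so that $\nu_0=\dist(L,L)+\dist(L,\psi(L))\le\|H\|$; by Remark \ref{rem:optimalestimate} we may take $\|H\|$ as close to $e(L)$ as we wish, and $\nu=\|H^{(0)}\|+\|H^{(1)}\|=\|H\|$.

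Next I would run the chain-level argument of Subsection \ref{subsec:6.1.25} verbatim in this special case. The maps \eqref{eq:imporvedphi} and \eqref{eq:improvedphi'}, built from the Hamiltonian-perturbed equation \eqref{eq:CRJH} with $\rho_\pm$ together with the coordinate change $\frak g^{\pm}$, give chain maps $T^{\nu_-}\phi$ and $T^{\nu_+}\phi'$ between $F^0C(L,L;\Lambda_{\text{\rm nov}})$ and $F^0C(\psi(L),L;\Lambda_{\text{\rm nov}})$ whose composite is chain homotopic to $T^{\nu}\frak i$ by the Lemma preceding \eqref{eq:comphomot}. Passing to cohomology at $\lambda=0$ yields, as in \eqref{eq:comphomot},
\begin{equation*}
(T^{\nu_+}\phi')_*\circ(T^{\nu_-}\phi)_* = T^{\nu}\colon HF((L,\text{\bf b}),(L,\text{\bf b});\Lambda_{0,\text{\rm nov}})\to HF((L,\text{\bf b}),(L,\text{\bf b});\Lambda_{0,\text{\rm nov}}).
\end{equation*}
The crucial new input is that, because $\psi(L)\cap L=\emptyset$, the Floer complex $CF(\psi(L),L)$ is trivial, so $HF((\psi(L),\psi_*\text{\bf b}),(L,\text{\bf b});\Lambda_{0,\text{\rm nov}})=0$; hence the middle term of the composite vanishes and $T^{\nu}=0$ on $HF((L,\text{\bf b}),(L,\text{\bf b});\Lambda_{0,\text{\rm nov}})$. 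But multiplication by $T^{\nu}$ annihilates the summand $\Lambda_{0,\text{\rm nov}}/T^{\lambda_i}\Lambda_{0,\text{\rm nov}}$ in \eqref{eq:HF0nov} precisely when $\nu\ge\lambda_i$, and it kills the free part $\Lambda_{0,\text{\rm nov}}^{\oplus a}$ only if $a=0$. Since $\frak T(L,\text{\bf b})>0$ there is at least one torsion summand, and (by the vanishing of $HF$ with full $\Lambda_{\text{\rm nov}}$-coefficients, forced again by displaceability) $a=0$; so $T^{\nu}=0$ forces $\nu\ge\lambda_i$ for every $i$, i.e. $\nu\ge\frak T(L,\text{\bf b})$.

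Finally I would let $\|H\|\downarrow e(L)$: since $\nu=\|H\|\ge\frak T(L,\text{\bf b})$ for every displacing $H$, taking the infimum over such $H$ gives $e(L)\ge\frak T(L,\text{\bf b})$. As this holds for every $\text{\bf b}\in\MM_{\text{weak,def}}(L)$, taking the supremum over $\text{\bf b}$ gives $e(L)\ge\frak T(L)$, which is the second assertion. I expect the main obstacle to be purely bookkeeping rather than conceptual: one must be careful that the chain homotopy of the preceding Lemma is built from solutions of \eqref{eq:CRJH} with the \emph{fixed} Lagrangian boundary condition and elongation $\rho_K$, $0\le K<\infty$ (degenerating to the constant $0$ at $K=0$), and that the energy bound \eqref{energy bound} in Proposition \ref{prop:finalestimate} controls the filtration loss uniformly by $\nu$ along the whole parametrized moduli space — this is exactly the point where the original proof in \cite{fooo:book} had its gap, so the verification that the present construction genuinely produces the homotopy with the asserted filtration behavior is the step deserving the most care. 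One should also check the bulk-deformation bookkeeping: the coordinate change $\frak g^{\pm}$ and the identification $\psi_*\colon\MM_{\text{weak,def}}(L)\to\MM_{\text{weak,def}}(\psi(L))$ are compatible with $\pi_{\text{amb}}$ and $\frak{PO}$, so the deformed Floer cohomologies on both sides are defined for the matched pair $(\text{\bf b},\text{\bf b})$ and $(\psi_*\text{\bf b},\psi_*\text{\bf b})$, and the vanishing of the latter is automatic from $\psi(L)\cap L=\emptyset$.
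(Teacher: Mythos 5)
Your proposal is correct and follows essentially the same approach as the paper's proof: specialize the corrected chain maps $T^{\nu_-}\phi,T^{\nu_+}\phi'$ to the pair $(L,L)\to(L,\psi(L))$ with $H^{(0)}\equiv 0$, use that $\psi(L)\cap L=\emptyset$ forces the intermediate Floer cohomology to vanish, and compare the resulting vanishing against the module structure \eqref{eq:HF0nov}. The only difference is cosmetic — the paper runs a contradiction at the level $\lambda=\|H\|$ via the inclusion-induced map $\frak i_*$, whereas you argue directly that $T^{\nu}=0$ on $HF$ forces $\nu\ge\max_i\lambda_i$ and then take the infimum over displacing $H$ — and you correctly identify the filtration-control along the parametrized moduli space (Proposition \ref{prop:finalestimate}) as the delicate point.
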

\begin{proof}
Suppose to the contrary that there exist a sufficiently small $\delta > 0$ and an element
$\frak b \in \MM_{\rm{weak,def}}(L)$ such that
$$
e(L) < \frak T(L,\text{\bf{b}}) - \delta.
$$
Pick a Hamiltonian $H$ and its associated Hamiltonian isotopy $\phi_H$
such that
$$
\phi_H^1(L) \cap L = \emptyset, \quad \|H \| \leq e(L) + \delta.
$$
In particular, we also have
$$
\|H\| < \frak T(L,\text{\bf{b}}).
$$
Now we recall from \eqref{eq:phi'phi} that $\phi'_* \circ \phi_*$ restricts to
$$
T^\lambda HF((L,\text{\bf{b}}),(L,\text{\bf{b}});\Lambda_{0,\text{\rm nov}})
\to T^{\lambda - \|H\|} HF((L,\text{\bf{b}}),(L,\text{\bf{b}});\Lambda_{0,\text{\rm nov}}).
$$
and satisfies
$$
\phi'_* \circ \phi_* = (\phi'\circ \phi)_* = \frak i_*
$$
as a map
\be\label{eq:lambda-H}
T^\lambda HF((L,\text{\bf{b}}),(L,\text{\bf{b}});\Lambda_{0,\text{\rm nov}})
\to T^{\lambda - \|H\|} HF((L,\text{\bf{b}}),(L,\text{\bf{b}});\Lambda_{0,\text{\rm nov}})
\ee
for all $\lambda \in \R$.
\par
We now specialize to the case $\lambda= \|H\|$. In this case,
$$
T^{\lambda - \|H\|} HF((L,\text{\bf{b}}),(L,\text{\bf{b}});\Lambda_{0,\text{\rm nov}})
= HF((L,\text{\bf{b}}),(L,\text{\bf{b}});\Lambda_{0,\text{\rm nov}}).
$$
Since $\lambda < \frak T(L,\text{\bf{b}})$, the image of
the inclusion-induced map
$$
\frak i_*:T^\lambda HF((L,\text{\bf{b}}),(L,\text{\bf{b}});\Lambda_{0,\text{\rm nov}}) \to
HF((L,\text{\bf{b}}),(L,\text{\bf{b}});\Lambda_{0,\text{\rm nov}})
$$
is not trivial by the definition of $\frak T(L,\text{\bf{b}})$.

On the other hand, $HF((\phi^1_H(L), \phi^1_{H \ast}\text{\bf{b}}),(L,
\text{\bf{b}});\Lambda_{0,\text{\rm nov}}) = \{0\}$ by the hypothesis
$L \cap \phi^1_H(L) = \emptyset$ and hence $\phi_* = 0 =\phi'_*$ which implies
$\phi'_*\circ \phi_* = 0$.

Therefore the equality $\phi'_* \circ \phi_* = (\phi'\circ \phi)_* = \frak i_*$ with $\lambda = \|H\|$
in \eqref{eq:lambda-H} gives rise to a contradiction. This finishes the proof.
\end{proof}

\section{Displacement of polydisks inside cylinders in high dimensions}
\label{sec:polydisks}

In this section, we consider the situation of \cite{hind} in any dimension. Namely,
we prove Theorem \ref{thm:hindpoly} and
Theorem \ref{thm:hindbi} stated in Section \ref{sec:intro}.
\par
We recall the polydisks in $\C^n$ denoted by
$$
D(a_1,a_2,\ldots, a_n) = \{(z_1,\ldots, z_n) \in \C^n \mid \pi |z_1|^2 < a_1, \ldots,  \pi|z_n|^2 < a_n\}
$$
where $a_1 \leq a_2 \leq \cdots \leq a_n$. Hind considers only the case when $n = 2$. We also denote
the cylinder over the disk $|z_1|^2 \leq (a_1+\e)/\pi$ by
$$
Z_{1,n-1}(a_1+\e) = \{(z_1,\ldots, z_n) \mid \pi |z_1|^2 < a_1 + \e\}
$$
for $0< \e < 1$.

\begin{thm}[Theorem \ref{thm:hindpoly}]\label{thm:generalHind} Suppose that
$S>1$ and $0< \e <1$.
Let $Z_{1,n-1} = Z_{1,n-1}(1 + \e)$. Then we have
$$
S \leq e^{Z_{1,n-1}}(D(1,S,\ldots, S)).
$$
\end{thm}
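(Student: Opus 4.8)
The plan is to produce, inside the polydisk $D(1,S,\dots,S)$, a Lagrangian torus $L$ with $e^{Z_{1,n-1}(1+\e)}(L)\ge S$; since $L\subset D(1,S,\dots,S)$, any Hamiltonian diffeomorphism displacing $D(1,S,\dots,S)$ also displaces $L$, and therefore $S\le e^{Z_{1,n-1}(1+\e)}(L)\le e^{Z_{1,n-1}(1+\e)}(D(1,S,\dots,S))$.

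For a small $\delta>0$ I would set $L=L_1\times L_2\times\cdots\times L_n$, where $L_1$ is the equator of $S^2(1+\e)$ — it encloses area $(1+\e)/2<1$ precisely because $\e<1$, so $L_1\subset D^2(1)$ once we identify $D^2(1+\e)$ with $S^2(1+\e)$ minus a point — and $L_i=S^1(S-\delta)\subset D^2(S)\subset\C$ for $i=2,\dots,n$. Then $L\subset D^2(1)\times D^2(S)^{n-1}=D(1,S,\dots,S)$, and $L$ is a relatively spin Lagrangian torus which is a moment-map fiber of the toric manifold $M_R:=S^2(1+\e)\times S^2(R)^{n-1}$ for every $R$, where again $D^2(R)\subset\C$ is identified with the complement of a point in $S^2(R)$.

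To bound $e^{Z_{1,n-1}(1+\e)}(L)$ from below I pass to the $M_R$. Any compactly supported Hamiltonian $H$ on $Z_{1,n-1}(1+\e)=D^2(1+\e)\times\C^{n-1}$ with $\phi_H^1(L)\cap L=\emptyset$ has support in $D^2(1+\e)\times D^2(R)^{n-1}$ for $R$ large, and for every such $R>2S$ it extends by zero to a Hamiltonian on $M_R$ of the same Hofer norm whose time-one map still displaces $L$; hence $\|H\|\ge e^{M_R}(L)$. Since cutting off does not increase the Hofer norm and every Hamiltonian isotopy of the cylinder displacing the compact set $L$ can be so cut off, $e^{Z_{1,n-1}(1+\e)}(L)\ge\liminf_{R\to\infty}e^{M_R}(L)$. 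Now $L$ is a toric fiber of $M_R$, hence weakly unobstructed with vanishing bounding cochain and bulk, and for $R$ large it is displaceable in $M_R$. By the computation of Floer cohomology of toric fibers in \cite{fooo:toric1,fooo:toric2}, $HF((L_1,0),(L_1,0);\Lambda_{0,\text{nov}})\cong\Lambda_{0,\text{nov}}^{\oplus 2}$ because the equator is the monotone fiber of $S^2(1+\e)$, while $HF((L_i,0),(L_i,0);\Lambda_{0,\text{nov}})\cong\Lambda_{0,\text{nov}}/T^{\,S-\delta}\Lambda_{0,\text{nov}}$ for $i\ge 2$ because $\min\{S-\delta,\,R-(S-\delta)\}=S-\delta$. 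Combining the Künneth isomorphism for the Floer complex of a product of Lagrangians in a product of symplectic manifolds with the algebraic Künneth theorem over the valuation ring $\Lambda_{0,\text{nov}}$ gives
\[
HF((L,0),(L,0);\Lambda_{0,\text{nov}})\cong\bigoplus_k\bigl(\Lambda_{0,\text{nov}}/T^{\,S-\delta}\Lambda_{0,\text{nov}}\bigr)^{\oplus m_k},
\]
so $HF((L,0),(L,0);\Lambda_{\text{nov}})=0$ and the torsion threshold is $\frak T(L,0)=S-\delta$: tensoring by the free module coming from the monotone factor $L_1$ does not lower the torsion exponent, and the exponents contributed by $L_2,\dots,L_n$ are all equal to $S-\delta$, so the minima appearing in Künneth are again $S-\delta$. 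Theorem \ref{thm:eLTL} applied to $L\subset M_R$ then yields $e^{M_R}(L)\ge\frak T(L,0)=S-\delta$ for all large $R$, whence $e^{Z_{1,n-1}(1+\e)}(L)\ge S-\delta$; letting $\delta\to 0$ finishes the proof.

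I expect the main obstacle to be the torsion-threshold computation in the preceding paragraph: one needs the toric Floer cohomology of the fibers of $S^2(a)$ with the correct torsion exponents and must combine the factors by a Künneth formula over $\Lambda_{0,\text{nov}}$. The structural reason the bound comes out equal to $S$, rather than to the $\approx S/2$ of Hind, is that the short first factor $D^2(1)$ is used to produce a \emph{monotone}, hence homologically free, factor $L_1$ (which is possible exactly when $\e<1$), so it contributes no small torsion exponent, while the large torsion exponent $\approx S$ is carried by the circles $S^1(S-\delta)$ fitting inside the $D^2(S)$-factors of the polydisk. A more routine but still nontrivial point is the reduction, via cutting off displacing Hamiltonians, from the noncompact cylinder to the compact toric manifolds $M_R$.
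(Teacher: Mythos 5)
Your proposal is essentially correct and follows the same strategy as the paper: embed the polydisk and its Hamiltonian image into a compact toric product manifold, pick a toric torus fiber sitting inside the polydisk, compute its torsion threshold, and invoke Theorem \ref{thm:eLTL} to bound the displacement energy from below. The point of difference is the torsion-threshold computation. The paper proves $\frak T(L,0)\ge S$ directly: using Cho--Oh's classification of Maslov index $2$ disks in the product of spheres, the two hemisphere disks of area $\frac{1+\e'}{2}$ cancel in $\frak m_1$ (Case I-a of Subsection 3.7.6 of \cite{fooo:book}, Theorem 1.3 of \cite{fooo:inv}), and the $(n-1)$ area-$S$ disks contribute with the same sign (Theorem 11.1(3) of \cite{fooo:toric1}) and produce a summand $\Lambda_{0,\text{\rm nov}}/T^S\Lambda_{0,\text{\rm nov}}$ (Case I-b of Subsection 3.7.6 of \cite{fooo:book}). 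You instead reach the answer via a K\"unneth isomorphism for Floer cohomology of a product Lagrangian in a product symplectic manifold, combined with the algebraic K\"unneth theorem over $\Lambda_{0,\text{\rm nov}}$. This is a natural route and yields the same torsion exponents, but a K\"unneth theorem for Lagrangian Floer cohomology at the level needed here — i.e.\ tracking the full torsion module structure over $\Lambda_{0,\text{\rm nov}}$, not merely the rank over $\Lambda_{\text{\rm nov}}$ — is not among the results you can cite from \cite{fooo:toric1,fooo:toric2}, and would have to be justified separately (e.g.\ via a tensor-product statement for filtered $A_\infty$-algebras). The paper's disk-by-disk argument sidesteps this; you would do well to replace your K\"unneth step by that direct computation, which works verbatim for your torus. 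Your other adjustments — using circles $S^1(S-\delta)$ and sending $\delta\to 0$, and taking $R\to\infty$ in the ambient toric manifold rather than fixing one $\lambda>2S$ — are harmless technical variants and in fact make the inclusion $L\subset D(1,S,\dots,S)$ strictly open-interior, which is a mild improvement in rigor over the formulation with $S^1(S)$.
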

\begin{proof} We prove this by contradiction. Suppose $e^{Z_{1,n-1}}(D(1,S,\cdots, S)) < S$ and so
$$
e^{Z_{1,n-1}}(D(1,S,\ldots, S)) < S - \delta
$$
for some small $\delta > 0$. By definition of
$e^{Z_{1,n-1}}(D(1,S,\ldots, S))$, there exists a Hamiltonian
$H$ on $Z_{1,n-1}$ such that
$$
\phi_H^1(D(1,S,\ldots, S)) \cap D(1,S,\ldots, S) = \emptyset, \quad \supp \phi_H \subset Z_{1,n-1}
$$
and
\be\label{eq:eZH}
\|H\| \leq e^{Z_{1,n-1}}(D(1,S,\ldots, S)) + \delta < S,
\ee
where the inequality comes from the standing hypothesis.
Since
$$\supp \phi_H \subset
\operatorname{Int} Z_{1,n-1}(1 + \e)
$$
is compact, we can symplectically embed
$$
D(1,S, \ldots, S) \subset S^2(1 +\e') \times \underbrace{S^2(\lambda) \times \cdots \times S^2(\lambda)}_{(n-1) \mbox{ times}}=:X
$$
together with the image of $D(1,S,\ldots ,S)$ by the isotopy
$\phi_H^t, 0\le t \le 1,$
for some $\e'$ with $0 < \e < \e'$ and sufficiently large $\lambda > 0$. For the later purpose, we take $\e'$ and $\lambda$
which satisfy $0 < \e < \e' <1$ and $\lambda >2S$.

We consider a circle $S^1(S) \subset S^2(\lambda)$ which divides $S^2(\lambda)$ into two domains of areas
$S$ and $\lambda -S$ respectively. Then we consider the torus
$$
L=L\left(\frac{1+\e'}{2},S,\ldots,S \right) =
S^1\left(\frac{1+\e'}{2}\right) \times S^1(S) \times \cdots \times S^1(S),
$$
which is a subset of $D(1,S, \ldots, S)$ because $\e'<1$.
This torus $L$ is displaceable by $\phi_H$
inside $X=S^2(1 +\e') \times \underbrace{S^2(\lambda) \times \cdots \times S^2(\lambda)}_{(n-1) \mbox{ times}}$
since $D(1,S, \ldots, S)$ is so. Therefore we have
$
e^X(L) \leq \|H\|
$
which follows from the definition of $e^X$.
In particular, by (\ref{eq:eZH}) we have
\be\label{eq:eMbetaT}
e^X(L) < S.
\ee
\par
On the other hand, we know that the torus
$$
L=L\left(\frac{1+\e'}{2},S,\ldots,S \right) \subset S^2(1 +\e') \times \underbrace{S^2(\lambda) \times \cdots \times S^2(\lambda)}_{(n-1) \mbox{ times}}
$$
is one of the toric fiber.
By Proposition 4.3 of
\cite{fooo:toric1} it
is weakly unobstructed
(i.e., $\MM_{\text{weak}}(L) \ne \emptyset$)
and we can
choose a weak bounding cochain $b \in \MM_{\text{weak}}(L) $ as $b = 0$.

Now it remains to show

\begin{lem} Choose the weak bounding cochain $b = 0$. Then
we have
$$
\frak T(L,0) \geq S.
$$
\end{lem}
\begin{proof}
By a result of \cite{cho-oh}
the Maslov index 2 disks are completely classified. They
consist of the obvious ones coming from the the upper and lower hemispheres of
$S^2(1+\e')$ which have equal areas $\frac{1 +\e'}{2}$, and those two
domains coming from $S^2(\lambda) \setminus S^1(S)$. The coboundary map $\frak m_1$ of
the Floer cochain complex of $L$
are contributed by these disks.
Since $\e'<1 <S$, the holomorphic disks with the minimal area are
the first two disks
$$
D^2_\pm\left(\frac{1+\e'}{2}\right) \times \{pt\} \times\cdots \times
\{pt\} \subset X,
$$
which cancel each other in the operation of $\frak m_1$.
See Case I-a in Subsection 3.7.6 \cite{fooo:book} and
Theorem 1.3 \cite {fooo:inv} for this cancellation argument.
The area of the next smallest
area disk is $S$ because $\lambda >2S$.
We have $(n-1)$ holomorphic disks with area $S$:
$$
\{pt\} \times \cdots  \times D_l^2(S) \times \{pt\} \times\cdots \times
\{pt\} \subset X, \quad l=2,\dots , n,
$$
where $D_l^2(S)$ is the disk with area $S$
bounding the circle
$S^1(S)$ in the $l$-th factor $S^2(\lambda)$
of $X$.
We note that
such holomorphic disks contribute to
$\frak m_1$ with the same sign. (See Theorem 11.1 (3) in
\cite{fooo:toric1} for more general result on orientations of moduli spaces of the Maslov index $2$ disks in toric manifolds.)
In particular, these holomorphic disks
do not cancel each other.
Then
the argument similar to one of Case I-b in Subsection 3.7.6
\cite{fooo:book} shows that
they produce a torsion part
$\Lambda_{0,{\text {nov}}}/T^S\Lambda_{0,{\text {nov}}}$
in the Floer cohomology of $L$.
It follows that
$$
\frak T(L,0) \geq S.
$$
\end{proof}

Combining \eqref{eq:eMbetaT} and this lemma, we obtain
$$
e^X(L) < \frak T(L,0).
$$
But this contradicts to Theorem \ref{thm:eLTL} and finishes the proof of
Theorem \ref{thm:generalHind}.
\end{proof}

By a similar argument, we can show the following variant of Theorem \ref{thm:generalHind}.
We consider the domain
$$
D_{n-k,k}(1,S): = D^2(1)^{n-k} \times B^{2k}(kS)
$$
for $k=1,\dots ,n-1$. Here $B^{2k}(kS)$ is the
ball in $\C^k$ of redius $r$ with the Gromov
width
$\pi r^2 = kS$.

\begin{thm}[Theorem \ref{thm:hindbi}] Suppose that
$S>1$ and $0< \e <1$.
Let $Z = Z_{n-k,k}(1 + \e) = D^2(1+\e)^{n-k} \times \C^k$.
Then we have
$$
S \leq e^{Z_{n-k,k}}(D_{n-k,k}(1,S)).
$$
\end{thm}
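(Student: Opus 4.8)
The plan is to reproduce the proof of Theorem~\ref{thm:generalHind}, with the spheres $S^2(\lambda)$ that compactified the $\C^{n-1}$--directions there now replaced by a single $\C P^k(\lambda)$ with $\lambda$ large, and with the torus fiber $S^1(S)^{n-1}$ replaced by a toric fiber of $\C P^k(\lambda)$ all of whose moment coordinates equal $S-\eta$, where $\eta>0$ is an auxiliary parameter sent to $0$ at the end. The role of the Gromov width $kS$ of $B^{2k}(kS)$ is exactly that the interior of its moment simplex $\{x_j>0,\ \sum_j x_j<kS\}$ contains the point $(S-\eta,\dots,S-\eta)$, so the corresponding fiber, whose $k$ smallest holomorphic disks each have area $S-\eta$, still lies well inside $B^{2k}(kS)$.

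Suppose for contradiction that $e^{Z_{n-k,k}}(D_{n-k,k}(1,S))=S-c$ with $c>0$. Fix $\eta,\delta>0$ with $\eta+\delta<c$ and a Hamiltonian $H$ supported in $Z_{n-k,k}(1+\e)=D^2(1+\e)^{n-k}\times\C^k$ displacing $D_{n-k,k}(1,S)$ with $\|H\|\le e^{Z_{n-k,k}}(D_{n-k,k}(1,S))+\delta<S-\eta$. As $\supp\phi_H$ is compact it sits in $D^2(1+\e)^{n-k}\times B^{2k}(R)$ for some $R$; choose $\e'\in(\e,1)$ and $\lambda>\max\{R,(k+1)S\}$, and symplectically embed
$$
D^2(1+\e)^{n-k}\times B^{2k}(R)\ \hookrightarrow\ X:=S^2(1+\e')^{n-k}\times\C P^k(\lambda)
$$
via $D^2(1)\subset S^2(1+\e')$ on the first $n-k$ factors and the affine chart $B^{2k}(\lambda)\cong\C P^k(\lambda)\setminus\C P^{k-1}$ on the last. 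Extending $\phi_H$ by the identity gives a Hamiltonian isotopy of $X$ of the same Hofer norm displacing the image of $D_{n-k,k}(1,S)$.

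Consider the Lagrangian toric fiber
$$
L_\eta=S^1((1+\e')/2)^{n-k}\times L(S-\eta,\dots,S-\eta)\ \subset\ X,
$$
where the first $n-k$ circles are the equators of the spheres $S^2(1+\e')$ and $L(S-\eta,\dots,S-\eta)$ is the fiber of $\C P^k(\lambda)$ over $(S-\eta,\dots,S-\eta)$. Since $(1+\e')/2<1$ and $k(S-\eta)<kS$, the torus $L_\eta$ lies in the interior of the image of $D_{n-k,k}(1,S)$, hence is displaced in $X$ by $\phi_H$ and $e^X(L_\eta)\le\|H\|<S-\eta$. Being a toric fiber of the Fano toric manifold $X$, $L_\eta$ is weakly unobstructed with $b=0$ a weak bounding cochain (Proposition~4.3 of \cite{fooo:toric1}). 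The next step, and the heart of the argument, is to show $\frak T(L_\eta,0)\ge S-\eta$. By the Cho--Oh classification \cite{cho-oh}, the Maslov index $2$ disks bounded by $L_\eta$ are the two hemispheres of each $S^2(1+\e')$ factor (of area $(1+\e')/2$), the $k$ coordinate disks of the affine chart $B^{2k}(\lambda)$ (each of area $S-\eta$), and one disk meeting $\C P^{k-1}$ (of area $\lambda-k(S-\eta)$, which exceeds $S-\eta$). Exactly as in Case~I-a of Subsection~3.7.6 of \cite{fooo:book}, the two hemisphere disks in each sphere factor cancel in $\frak m_1$, so these factors contribute only a free part; and exactly as in Case~I-b, the $k$ disks of area $S-\eta$ enter $\frak m_1$ with a common sign (Theorem~11.1(3) of \cite{fooo:toric1}), hence do not cancel, producing a torsion summand $\Lambda_{0,\text{\rm nov}}/T^{S-\eta}\Lambda_{0,\text{\rm nov}}$ in $HF((L_\eta,0),(L_\eta,0);\Lambda_{0,\text{\rm nov}})$. (Equivalently, the $b=0$ Floer complex on $H^*(L_\eta;\Lambda_{0,\text{\rm nov}})$ is the Koszul complex of the sequence consisting of $n-k$ zeros followed by $k$ copies of $T^{S-\eta}(1-T^{\lambda-(k+1)(S-\eta)})$, whose homology is a sum of copies of $\Lambda_{0,\text{\rm nov}}/T^{S-\eta}\Lambda_{0,\text{\rm nov}}$; in particular $HF((L_\eta,0),(L_\eta,0);\Lambda_{\text{\rm nov}})=0$.) Thus $\frak T(L_\eta,0)\ge S-\eta>0$.

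Applying Theorem~\ref{thm:eLTL} to $L_\eta$ (with bounding cochain $0$) yields $e^X(L_\eta)\ge\frak T(L_\eta,0)\ge S-\eta$, contradicting $e^X(L_\eta)<S-\eta$. Hence $e^{Z_{n-k,k}}(D_{n-k,k}(1,S))\ge S$, as claimed. The point I expect to require the most care is the torsion computation: one needs the orientation statement of \cite{fooo:toric1} to see that the $k$ equal-area disks from the $\C P^k(\lambda)$ factor do not cancel, and one must check that, after combining with the \emph{free} Floer cohomology of the equator factors, the largest surviving torsion exponent is still $S-\eta$---this is why the first $n-k$ circles are taken to be equators (whose two minimal disks cancel) rather than arbitrary circles, which would introduce smaller torsion exponents and spoil the estimate. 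The other point to be careful about is the need for the auxiliary $\eta>0$: with $v_j=S$ the fiber $L(S,\dots,S)$ lies only on the boundary sphere of $B^{2k}(kS)$, where displaceability of the open domain does not immediately force $\phi_H^1(L)\cap L=\emptyset$, whereas for $v_j=S-\eta$ the fiber lies in the interior, and letting $\eta\to0$ recovers the sharp bound.
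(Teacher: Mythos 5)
Your argument follows the paper's approach exactly: embed into the toric manifold $S^2(1+\e')^{n-k}\times\C P^k(\lambda)$, select the displaced toric fiber, compute its torsion threshold via the Cho--Oh classification together with the equator-cancellation and common-sign arguments of \cite{fooo:book} and \cite{fooo:toric1}, and invoke Theorem~\ref{thm:eLTL}; the paper itself omits these steps as ``similar to'' Theorem~\ref{thm:generalHind}. The only substantive difference is your auxiliary $\eta>0$, which is a legitimate and indeed welcome refinement: the fiber $S^1(S)^k$ the paper uses lies only on the boundary sphere of the open ball $B^{2k}(kS)$, not in its interior, so displacement of $D_{n-k,k}(1,S)$ does not literally displace that fiber, whereas your $L_\eta$ sits in the interior and the estimate is recovered in the limit $\eta\to 0$. (One small typo: the embedding should read $D^2(1+\e)\subset S^2(1+\e')$, not $D^2(1)\subset S^2(1+\e')$.)
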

\begin{proof} The proof will be the same as that of Theorem \ref{thm:generalHind}
with the following modifications.
We again prove this by contradiction. Suppose
$
e^{Z_{n-k,k}}(D_{n-k,k}(1,S)) < S
$
and choose $\delta >0$ and $H$ as before so that
$$
e^{Z_{n-k,k}}(D_{n-k,k}(1,S)) < S - \delta
$$
and
$$
\phi_H^1(D_{n-k,k}(1,S)) \cap D_{n-k,k}(1,S) = \emptyset, \quad
\supp \phi_H \subset Z_{n-k,k}(1+\e),
$$
and
$$
\|H\| \leq e^{Z_{n-k,k}}(D_{n-k,k}(1,S)) + \delta < S.
$$
Then we can symplectically embed
$$
D_{n-k,k}(1,S) \subset S^2(1 +\e')^{n-k} \times  \C P^k(\lambda) =:X
$$
together with the image of $D_{n-k,k}(1,S)$ by the isotopy
$\phi_H^t, 0\le t \le 1,$ for some $\e'$ with $0 < \e < \e' <1$ and sufficiently large $\lambda > 0$.
Then we consider the torus
\beastar
L & = & S^1\left(\frac{1+\e'}{2}\right)^{n-k} \times S^1(S)^k  \\
& \subset &
S^2(1 +\e')^{n-k} \times B^{2k}(kS) \subset S^2(1 +\e')^{n-k} \times \C P^k(\lambda).
\eeastar
The torus $L$ is also contained in $D_{n-k,k}(1,S)$ because $\e'<1$.
Note that
$L$ is one of the toric fiber in $X=S^2(1 +\e')^{n-k} \times \C P^k(\lambda)$.
The rest of the proof is similar to
one of Theorem \ref{thm:generalHind}.
So we omit it.
\end{proof}

\bibliographystyle{amsalpha}

\begin{thebibliography}{FOOOX}

\bibitem[BEHWZ]{behwz} F. Bourgeois, Y. Eliashberg, H. Hofer,
K. Wysocki, and E. Zehnder, {\em Compactness results in symplectic
field theory}, Geom. Topol. 7 (2003), 799--888.

\bibitem[Che]{chekanov}
Y.V. Chekanov, {\em Lagrangian intersections, symplectic energy, and areas of holomorphic curves,}
Duke Math. J. 95 (1998), no. 1, 213--226.

\bibitem[CO]{cho-oh} C.-H. Cho and Y.-G. Oh, {\em Floer cohomology and
disc instantons of Lagrangian torus fibers in Fano toric manifolds},
Asian J. Math. 10 (2006), 773--814.

\bibitem[FOOO1]{fooo:book} K. Fukaya, Y.-G. Oh, H. Ohta and K. Ono,
{\em Lagrangian intersection Floer theory-anomaly and
obstruction, Part I, \& Part II,} AMS/IP Studies in Advaneced Math. 46.1, \& 46.2, International Press/Amer. Math. Soc. (2009).

\bibitem[FOOO2]{fooo:toric1} K. Fukaya, Y.-G. Oh, H. Ohta and K. Ono,
{\em Lagrangian Floer theory on compact toric manifolds I,}
 Duke Math. J. 151 (2010), 23--174.

\bibitem[FOOO3]{fooo:toric2} K. Fukaya, Y.-G. Oh, H. Ohta and K. Ono,
{\em Lagrangian Floer theory on compact toric manifolds II : Bulk deformations,}
to appear in Selecta Mathematica, arXiv:0810.5654.

\bibitem[FOOO4]{fooo:inv} K. Fukaya, Y.-G. Oh, H. Ohta and K. Ono,
{\em Anti-symplectic involution and Floer cohomology,} submitted, arXiv:0912.2646.

\bibitem[FOOO5]{fooo:S2S2} K. Fukaya, Y.-G. Oh, H. Ohta and K. Ono,
{\em Toric degeneration and non-displaceable Lagrangian tori in $S^2
\times S^2$,} submitted, arXiv:1002.1660.

\bibitem[H]{hind} R. Hind, {\em Hamiltonian displacement of bidisks inside
cylinders}, preprint 2010, arXiv:0910.1370.

\bibitem[HK]{hind-kerm} R. Hind, E.  Kerman, {\em New obstructions to symplectic embeddings},
preprint 2009, arXiv:0906.4296.

\bibitem[Ho1]{hofer} H. Hofer, {\em On the topological properties of
symplectic maps}, Proc. Royal Soc. Edinburgh 115 (1990), 25--38.

\bibitem[Ho2]{hofer2} H. Hofer, {\em Pseudoholomorphic curves in symplectization
with applications to the Weinstein conjecture in dimension three}, Invent. Math.
114 (1993), 515--563.

\bibitem[Oh1]{oh:floer1} Y.-G. Oh, {\em Floer cohomology of Lagrangian
intersections and pseudo-holomorphic disks. I},  Comm. Pure Appl. Math.
46  (1993), 949--993; Addendum, ibid. 48 (1995), 1299--1302,

\bibitem[Oh2]{oh:jdg} Y.-G. Oh, {\em  Symplectic topology as the
geometry of action functional, I}, J. Differ. Geom.  46 (1997),
499--577.

\bibitem[Oh3]{oh:mrl} Y.-G. Oh, {\em Gromov-Floer theory and disjunction
energy of compact Lagrangian embeddings,} Math. Res. Lett. 4 (1997), 895--905.

\bibitem[W]{alan} A. Weinstein, {\em Connections of Berry and Hannay type for moving
Lagrangian submanifolds},  Adv. Math.  82  (1990),  no. 2, 133--159.

\end{thebibliography}

\end{document}